\numberwithin{equation}{section}
\def\sO{{\mathscr O}}
\def\sM{{\mathscr M}}
\def\sL{{\mathscr L}}
\def\sO{\mathscr{O}}
\def\sR{\mathscr{R}}
\newcommand{\PP}{\mathbb{P}}
\newcommand{\ZZ}{\mathbb{Z}}
\newcommand{\bk}{\mathbf{k}}
\newcommand{\kk}{\bk}
\newcommand{\bP}{\mathbf{P}}
\newcommand{\cal}{\mathcal}
\def\cA{{\cal A}}
\def\cB{{\cal B}}
\def\cC{{\cal C}}
\def\cD{{\cal D}}
\def\cE{{\cal E}}
\def\cF{{\cal F}}
\def\cO{{\cal O}}
\def\cU{{\cal U}}
\def\cV{{\cal V}}
\def\cS{{\cal S}}
\def\cX{{\cal X}}
\def\fD{\mathfrak{D}}
\def\fE{\mathfrak{E}}
\def\fM{\mathfrak{M}}
\def\mapright#1{\,\smash{\mathop{\lra}\limits^{#1}}\,}
\def\dual{^{\vee}}
\def\sta{^\ast}
\def\upmo{^{-1}}
\def\sta{^{\ast}}
\def\pri{^{\prime}}
\def\sta{^*}
\def\lra{\longrightarrow}
\def\lsta{_{\ast}}
\def\Oplus{\mathop{\oplus}}
\def\begeq{\begin{equation}}
\def\endeq{\end{equation}}
\def\and{\quad{\rm and}\quad}
\def\bl{\bigl(}
\def\br{\bigr)}
\def\mh{\!:\!}
\def\sub{\subset}
\def\Ao{{\mathbb A}^{\!1}}
\def\and{\quad\text{and}\quad}
\def\mapright#1{\,\smash{\mathop{\lra}\limits^{#1}}\,}
 \DeclareMathOperator{\Ext}{Ext}
\let\lab=\label
\newtheorem{prop}{Proposition}[section]
\newtheorem{theo}[prop]{Theorem}
\newtheorem{lemm}[prop]{Lemma}
\newtheorem{coro}[prop]{Corollary}
\newtheorem{exam}[prop]{Example}
\newtheorem{defi}[prop]{Definition}
\theoremstyle{definition}
\newtheorem{say}[prop]{}
\def\bone{{\mathbf 1}}
\def\Pn{{\mathbb P}^n}
\def\PP{{\mathbb P}}
\def\AA{{\mathbb A}}
\def\MPd{\overline M_1(\Pn,d)}
\def\wMPd{\widetilde{M}_1(\Pn,d)}
\def\fMw{\fM\uwt_1}
\def\wfMw{\widetilde{\fM}\uwt_1}
\def\sta{^\ast}
\let\lab=\label
\def\sO{{\mathscr O}}
\def\lab#1{\label{#1}[{#1}]\  }
\def\lab{\label} 
\def\beq{\begin{equation}}
\def\eeq{\end{equation}}
\def\pri{^{\prime}}
\let\ga=\gamma
\let\eset=\emptyset
\def\Mon{\textnormal{Mon}}
\def\zetaz{z}
\def\bran{\rm br}
\def\Vg{\text{Ver}(\gamma)}
\def\Mg{\text{Mon}(\gamma)}
\def\uwt{^{\textnormal wt}}
\title[Local Structures and Desingularization of $\MPd$]
{Genus-One Stable Maps, Local Equations, and
Vakil-Zinger's desingularization}
\author{Yi Hu}
\address{Department of Mathematics, University of Arizona, USA.}
\email{yhu@math.arizona.edu}
\author{Jun Li}
\address{Department of Mathematics, Stanford University, USA.}
\email{jli@math.stanford.edu}
\begin{document}
\maketitle

\begin{abstract}
We describe
an algebro-geometric approach to Vakil-Zinger's desingularization
of the  main component of the moduli of genus one stable maps to
$\Pn$ \cite{VZ0, VZ}. The new approach provides complete local
structural results for this moduli space as well as for the
desingularization of the entire moduli space and should
fully extend to  higher genera.
\end{abstract}

\section{Introduction}

 Let $\MPd$ be the moduli of degree $d$ genus one stable maps \cite{K} to
$\Pn$ and let $\MPd_0\sub\MPd$ be the primary component that is
the closure of the open subset of all stable morphisms $[u,C]\in
\MPd$ with smooth domains. In \cite{VZ0,VZ}, Vakil-Zinger found a
canonical desingularzation $\widetilde{M}_1(\Pn,d)_0$ of $\MPd_0$
by performing ``virtual'' blowing-ups on $\MPd$. They also showed
that for $\tilde f_0: \tilde\cX\to\Pn$ and $\tilde\pi: \tilde\cX\to
\widetilde{M}_1(\Pn,d)_0$,  the pull-back universal family over $
\widetilde{M}_1(\Pn,d)_0$, the direct image sheaf
$$\tilde\pi_{\ast} \tilde f_0\sta\sO_{\Pn}(5)
$$
is locally free. Those desingularizations are useful for applying Atiyah-Bott
localization formula to the hyperplane relation proved by Li-Zinger in \cite{LZ}.
While the desingularization result in \cite{VZ} is algebro-geometric,
its proof is analytic in nature.\black

This paper provides an algebro-geometric approach to these
desingularization results.
It will be a part of an algebro-geometric approach of the
relation between ordinary and reduced
genus one GW-invariants of complete intersection in products of
projective spaces \cite{CL}. It will also serve as our first step to generalize
the structure results on moduli spaces of genus one stable maps to
higher genera.

Our proof consists of two stages. At first, we use the classical method of
studying special linear series on curves to give an algebro-geometric proof
of the local equations of $\MPd$, obtained by Zinger in \cite[\S 2.3]{Zinger}.
After that, we ``modular'' blow up $\MPd$ and prove that the resulting stack
has smooth irreducible components. The blowup construction used in this paper 
follows that in \cite{VZ}.

We now briefly outline our approach.
Since a stable map $[u,C]$ to a projective space $\Pn$ is given by
$$u=[u_0,\cdots,u_n]: C\lra\Pn,\quad u_i\in H^0(u\sta \sO(1)),
$$
its deformation is determined by the combined deformation of the
curve $C$ and the sections $\{u_i\}$. Since moduli spaces of curves is smooth, the singularity of
$\MPd$ is caused by the non-locally freeness of the direct
image sheaf $\pi_* f^* \sO_{\PP^n}(1)$ of the universal family
\begin{equation*}
 \pi: \cX\to \MPd \and f: \cX \lra \Pn
\end{equation*}
of $\MPd$.

To study the non-locally freeness of the direct image sheaf,
by assigning to each stable map $[u,C]$ the divisor $u_0^{-1}(0)\sub
C$, locally we can view $\MPd$ as a stack over the Artin stack $\fD_1$ of
pairs $(C,D)$ of genus one nodal curves $C$ and effective divisors
$D \subset C$. 
Over
each chart $\cV\sub \fD_1$, by picking an auxiliary section of the
universal curve $\pi: \cC\to\cV$ with $\cD\sub\cC$ the tautological divisor, we construct explicitly a complex
$\sR^\cdot=[\sO_\cV^{\oplus (d+1)}\mapright{\varphi}\sO_\cV]$ 
whose sheaf cohomology gives the
cohomology $R^\cdot \pi\lsta\sO_\cC(\cD))$.\black

We then apply the deformation theory of nodal curves to derive a
simple explicit form of the homomorphism $\varphi$ in $\sR^\cdot$.
Under a suitable trivialization,
$$\varphi=(0,\zeta_{1}, \cdots, \zeta_{d}),
$$
where each $\zeta_i$ is a suitable product of the pull back of
regular functions whose vanishing loci are irreducible components
of nodal curves in $\fD_1$. This description enables us to derive
explicit local equations of $\MPd$ (see  Theorem
\ref{thm:localEquTree}). They are analogous to the equations
described by Zinger \cite[\S 2.3]{Zinger}.


 To construct the desingularization of the moduli space,  we introduce the
Artin stack $\fM\uwt_1$ of pairs $(C,w)$ of genus one nodal curves
$C$ with non-negative weights $w\in H^2(C,\ZZ)$, meaning that $w
(\Sigma)\geq 0$ for all irreducible $\Sigma\sub C$. The stack
$\fM\uwt_1$ is smooth and contains closed substacks $\Theta_k$
whose general point is a pair $(C,w)$ such that $C$ consists of 
a smooth elliptic curve $C_e$ with $k$ rational curves attached
to $C_e$ and the restriction of $w$ to $C_e$ is zero, while its
restriction to the other components of $C$ in non-zero.
We then blow up $\fM\uwt_1$
successively along $\Theta_1$, $\Theta_2$, $\cdots$, etc., to
obtain $\widetilde{\fM}\uwt_1$. The desingularization of $\MPd$ is
$$\widetilde{M}_1(\Pn,d)=\MPd\times_{\fM\uwt_1}\widetilde{\fM}\uwt_1.
$$

After a detailed study of the lifting of the local equations of
$\MPd$ mentioned earlier, 
we prove that the
irreducible components of $\widetilde{M}_1(\Pn,d)$ are smooth and
intersect transversally. We also prove that for each irreducible
component $\widetilde{M}_1(\Pn,d)_\mu\sub \widetilde{M}_1(\Pn,d)$
with $(\tilde\pi_\mu,\tilde f_\mu)$ the pull-back universal family
on $\widetilde{M}_1(\Pn,d)_\mu$, the direct image sheaf
$$\tilde\pi_{\mu\ast} \tilde f\sta_\mu\sO_{\Pn}(r)
$$ 
is locally free. It is of rank $dr$ over the desingularization of the primary component and of
rank $dr+1$ elsewhere.  We comment that the results for
the primary component  were proved by Vakil-Zinger \cite{VZ}.
Weighted graph was also used to study stable maps to $\bP^2$ by
Pandharipande \cite{Pand}.

This paper is organized as follows. In \S 2, we outline our
approach, stating the main desingularization theorems \ref{thm:m1}
and \ref{thm:m2}, and the main local structure theorems
\ref{thm:equ1} and \ref{thm:equ3}. In \S 3, we introduce the
notion of weighted rooted trees of weighted nodal curves.
In the following section, we state and prove the main structural result
of the direct image sheaf $\pi\lsta f\sta\sO_{\Pn}(k)$. Finally,
in \S 5 we prove the theorems stated in \S 2.



We'd like to thank Zinger for numerous suggestions on improving the
presentation of the paper and for pointing out several oversights.
The second author also thanks him for valuable discussion
during their collaboration. The first author was partially supported by NSA;
the second author was partially supported by NSF DMS-0601002.

Throughout the paper, we fix an arbitrary algebraically closed base field $\kk$.  All
schemes in this paper are assumed to be noetherian over $\kk$.

\section{Canonical desingularization of $\MPd$ }

In this section, we state our main results.

\subsection{The Artin stack of weighted nodal curves}
\label{wnc}

\begin{defi}
Let $C$ be a curve. We set
$$H^2(C,\ZZ)^+ =\{ w \in H^2(C, \ZZ) \mid w (\Sigma) \ge 0 \; \hbox{ for}\ C\sub \Sigma\ \text{irreducible} \}.
$$
 A weighted nodal curve is a pair $(C,w)$ consisting of a connected nodal curve $C$ and a
weight assignment $w \in H^2(C,\ZZ)^+$.
\end{defi}

\begin{say} For any scheme $S$ and  flat family of nodal curves
$\cC \lra S$, a weight assignment of $\cC/S$ is a section $w$ of
the sheaf $R^2 \pi_* \ZZ_\cC^+$, where  $R^2 \pi_* \ZZ_\cC^+
\subset R^2 \pi_* \ZZ_\cC$ is the subsheaf consisting of all
sections $w$ whose restrictions $w(s)$ to any closed
point $s \in S$ lies in $H^2(\cC_s, \ZZ)^+$. Here $w(s)$ is the
image of $w$ under the pullback homomorphism $R^2 \pi_* \ZZ_\cC
\lra H^2(\cC_s, \ZZ)$.

A flat family of weighted nodal curves
over $S$ is a pair $(\cC/S, w)$ of a flat family of nodal curves
over $S$ together with a weight assignment over $\cC/S$. We say
that two families of weighted nodal curves $(\cC/S, w)$ and
$(\cC'/S', w')$ are equivalent if there is an isomorphism $h:
\cC/S \lra \cC'/S$ such that $w=h^* w'$.

A weighted nodal curve is called stable if every smooth
ghost (weight 0) rational curve $B \subset C$, $B$
contains at least three nodes of $C$. Fixing a genus $g > 0$, we form a
groupoid
$$\fM\uwt_g: \hbox{(Schemes)} \lra \hbox{(Sets)}.
$$
that sends any scheme $S$ to the set of all equivalence classes
of flat families of  stable weighted nodal curves of genus $g$
over $S$.
\end{say}

Mimicking the proof that the stack $\fM_g$ of nodal curves of
genus $g$ is an Artin stack, we obtain

\begin{prop}
The groupoid $\fM\uwt_g$ is a smooth Artin stack of dimension 3g-3.
The projection $\fM\uwt_g \lra \fM_g$ by forgetting the weight
assignment is \'etale.
\end{prop}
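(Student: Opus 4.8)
We must prove two things: that $\fM\uwt_g$ is a smooth Artin stack of dimension $3g-3$, and that the forgetful $1$-morphism
$\Phi\colon \fM\uwt_g \lra \fM_g$, $(\cC/S,w)\mapsto(\cC/S)$, is \'etale. Both follow at once from the single assertion that $\Phi$ is representable and \'etale: granting (classically) that $\fM_g$ is a smooth Artin stack of dimension $3g-3$, the diagonal of $\fM\uwt_g$ is representable because $\Phi$ is, a smooth atlas $U\lra\fM_g$ pulls back along $\Phi$ to an algebraic space \'etale over $U$ and thence to a smooth atlas of $\fM\uwt_g$, and smoothness and relative dimension $0$ of $\Phi$ transport the smoothness and dimension. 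So the whole content is the representability and \'etaleness of $\Phi$, and this is exactly where one ``mimics'' the argument that $\fM_g$ is a smooth Artin stack.

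Fix a scheme $S$ and a flat proper family $\pi\colon\cC\lra S$ of genus $g$ nodal curves with geometrically connected fibres, i.e. an object of $\fM_g(S)$. The $2$-fibre product $\fM\uwt_g\times_{\fM_g}S$ is the functor sending an $S$-scheme $T$ to the set of stable weight assignments of $\cC\times_S T/T$; by the definitions of \S\ref{wnc} this is the set of sections of $R^2\pi_{T*}\ZZ^+_{\cC_T}$ over $T$ whose induced weighted curve is stable. The plan is to represent this functor by an \'etale $S$-scheme. Write $\cF:=R^2\pi_*\ZZ_\cC$. Stratifying $S$ by the topological type of the fibres of $\cC/S$, over each (locally closed) stratum the family is, \'etale-locally, topologically trivial, so there $\cF$ is locally constant with stalk the finite free abelian group $H^2(\cC_s,\ZZ)$ -- freely generated by the classes of the irreducible components of $\cC_s$ -- and its formation commutes with arbitrary base change $T\lra S$. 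Hence $\cF$ is a constructible \'etale sheaf, its espace \'etal\'e $\mathbf E(\cF)\lra S$ is \'etale (though not quasi-compact, as the stalks are infinite), and $\mathbf E(\cF)$ represents $T\mapsto\Gamma(T,R^2\pi_{T*}\ZZ_{\cC_T})$, the functor of weight assignments before imposing positivity and stability.

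It remains to check that the positivity and stability constraints carve out a union of connected components of $\mathbf E(\cF)$. On each stratum, under the identification $\cF=\ZZ^{(\text{components})}$, the positivity condition $w\in H^2(\cC_s,\ZZ)^+$ is the sub-locally-constant-sheaf $\ZZ^{(\text{components})}_{\ge 0}$, whose espace \'etal\'e is plainly a union of components of $\mathbf E(\cF)$ over that stratum; since the condition is invariant under the monodromy permuting the components of the fibres, these glue to an open and closed subscheme $\mathbf E(\cF)^+\sub\mathbf E(\cF)$, still \'etale over $S$. Similarly, whether a section $w$ makes the weighted curve stable -- every weight-zero smooth rational component carries at least three nodes of $\cC_s$ -- depends only on the topological type of the fibre together with the set of components on which $w$ vanishes, both locally constant data on $\mathbf E(\cF)^+$; so the stable locus is again open and closed in $\mathbf E(\cF)^+$. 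This open-and-closed subscheme represents $\fM\uwt_g\times_{\fM_g}S$ and is \'etale over $S$, proving $\Phi$ representable and \'etale, hence the proposition.

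The only genuinely delicate point is the control of $\cF=R^2\pi_*\ZZ_\cC$ in families: its constructibility, the base-change compatibility, and the representability of its espace \'etal\'e by a (non-separated, non-quasi-compact) space \'etale over the base. Once the topological-type stratification of $\fM_g$ reduces everything to the locally constant case, the verification that ``positivity'' and ``ghost stability'' are locally constant conditions is routine, and the passage from $\Phi$ representable \'etale to $\fM\uwt_g$ a smooth Artin stack of dimension $3g-3$ is the same formal \'etale-descent used for $\fM_g$ itself. I would expect the write-up of this base-change/representability step to be the part requiring the most care.
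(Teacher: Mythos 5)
Your overall strategy is the right one and is, as far as one can tell, exactly what the authors intend by ``mimicking'' the standard argument: the paper offers no proof at all, and reducing everything to the representability and \'etaleness of the forgetful morphism $\fM\uwt_g\lra\fM_g$ is the natural way to make the assertion precise. The formal transfer of algebraicity, smoothness and dimension along a representable \'etale morphism is correct as you state it.

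There is, however, a genuine gap in the one step you gloss over: the claim that the positivity and stability conditions cut out an \emph{open and closed} sublocus of the espace \'etal\'e of $R^2\pi_*\ZZ_\cC$. First, ``closed'' is simply false: take $S$ a trait whose special fibre has two components $A,B$ joined at a node that is smoothed at the generic point; the generization map sends $(a,b)\in H^2(\cC_0,\ZZ)$ to $a+b\in H^2(\cC_\eta,\ZZ)\cong\ZZ$, so the positive point $0$ over $\eta$ lies in the closure of the non-positive point $(1,-1)$ over $0$. Second, and more importantly, ``open'' --- which is all you actually need, since an open subspace of a space \'etale over $S$ is \'etale over $S$ and represents the subfunctor of sections landing in it --- does not follow from your monodromy remark: that remark only handles the gluing \emph{within} each topological-type stratum, whereas openness is a statement \emph{across} strata. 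What must be checked is that both conditions are preserved under generization, i.e.\ under the cospecialization maps $H^2(\cC_0,\ZZ)\to H^2(\cC_\eta,\ZZ)$; combined with constructibility this yields openness. This does hold: cospecialization sends the class of a component of $\cC_\eta$ to the sum of the classes of the components of $\cC_0$ into which it degenerates, so nonnegativity is preserved; and if a smooth ghost rational component of $\cC_\eta$ carried fewer than three nodes, it would specialize to a ghost (by positivity) tree of rational curves in $\cC_0$ attached to the rest of the curve at most twice, and such a tree always has a ghost leaf with at most two nodes, contradicting stability of the special fibre. With this verification supplied --- and with ``$H^2(C,\ZZ)$'' read throughout, as the authors clearly intend over an arbitrary base field, as the free abelian group on irreducible components, i.e.\ the constructible \'etale sheaf of components rather than a literal \'etale cohomology group --- your proof is complete; note also that the espace \'etal\'e is a priori only an algebraic space \'etale over $S$, which is all the argument requires.
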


Note that $\fM\uwt_g$ is the disjoint union of infinitely many
smooth components, each of which is indexed by the total weight of
the weighted curves. Because of the stablity requirement, each
component is of finite type.

\subsection{Blowups of $\MPd$} $\\$

To describe the desingularization of $\MPd$, the notion of core curve is
pivotal.

\begin{defi} The core of a connected genus-one curve $C$ is the unique
smallest (by inclusion) subcurve of arithmetic genus one. The core
of a weighted curve $(C, w) \in \fM\uwt_1$ is called ghost if the
induced weight on the core is zero.
\end{defi}

\begin{say}\lab{theta}
The stack $\fM\uwt_1$ contains an open substack
$\mathring \Theta_0$ consisting
of weighted curves with non-ghost cores.
The complement
$\fM\uwt_1 \setminus \mathring \Theta_0$ admits a natural
partition according to the
number of rational trees attached to the ghost core curves:
$\mathring\Theta_k$ is the subset of pairs $(C,w)$ such that $C$ can be
obtained from the ghost core $C_e \subset C$ by attaching $k$ (connected) trees of
rational curves to the core $C_e$ at $k$ distinct smooth points of
$C_e$. Then $\fM\uwt_1=\coprod_{k\geq 0}\mathring\Theta_k$.
We let $\Theta_k$ be the closure of $\mathring\Theta_k$.
\end{say}

\begin{say}\lab{inductive-blowups}
We can successively blow up $\fM\uwt_1$ along the loci $\Theta_k$. The locus
$\Theta_1$ is a Cartier divisor; blowing up along $\Theta_1$ does
nothing. We start by blowing up $\fM\uwt_1$ along the locus
$\Theta_2$, which  is a smooth codimension 2 closed substack of
$\fM\uwt_1$; we denote the resulting stack by $\fM\uwt_{1,[2]}$, which
is smooth. Inductively, after obtaining $\fM\uwt_{1,[k-1]}$, we blow
it up along the proper transform $\Theta_{[k-1],k} \subset
\fM\uwt_{1,[k-1]}$ of the closed substack $\Theta_k \subset
\fM\uwt_1$.  Since $\Theta_{[k-1],k}$ is a smooth closed substack of
$\fM\uwt_{1,[k-1]}$ of codimension $k$, the new stack is smooth. We
continue this process for all $k=2,3,\cdots$. Since each connected
component of $\fMw$ is of finite type, the blowup process on this
component will terminate after finitely many steps. Therefore, the limit
stack, which is the blowup of $\fMw$ along the proper transforms
of $\Theta_k$ in $\fM_{1,[k-1]}$ for all $k\geq 2$, is a well-defined smooth Artin
stack; we denote this stack by $\wfMw$.
\end{say}

\begin{say}
To induce a blowup on $\MPd$, we form the fiber product
$$\wMPd = \MPd \times_{\fMw} \wfMw.$$
Here the morphism $\MPd \to \fMw$ is defined as follows. The
first Chern class $c_1(f^*\sO_{\Pn}(1))$ gives a weight assignment to the
domain curves of the universal family $f: \cX \to \Pn$ of $\MPd $,
making $\cX$ a family of weighted nodal elliptic
curves. This family then defines a tautological morphism
$$\MPd \lra \fMw
$$
that is a lift of the tautological morphism
$\MPd \to \fM_1$.  Note that since $c_1(f^*\sO_{\Pn}(1))$
has degree $d$, defining $\wMPd$ requires
blowing up $\fMw$ along the proper transforms of $\Theta_k$
from $k=2$ to $k=d$. That is,
$$\wMPd = \MPd \times_{\fMw} \fM\uwt_{1,[d]}.$$
\end{say}

We can now succinctly reformulate the end result of
the virtual blowup construction of \cite[\S 4.3]{VZ}.

\begin{theo} \lab{thm:m1}
$\wMPd$ is a DM-stack with normal crossing singularities.
\end{theo}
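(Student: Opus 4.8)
The plan is to deduce Theorem \ref{thm:m1} from the local structure theorems \ref{thm:equ1} and \ref{thm:equ3} together with the explicit description of the blowup $\wfMw$ given in \S\ref{inductive-blowups}. First I would recall that $\MPd$ is étale-locally, over the Artin stack $\fD_1$ of pairs $(C,D)$, cut out by the local equations exhibited in Theorem \ref{thm:equ1} (equivalently, by the explicit ideal generated by the entries $\zeta_1,\dots,\zeta_d$ of the homomorphism $\varphi$ in the two-term complex $\sR^\cdot$). The key observation is that passing from $\fD_1$ (or rather its étale cover) to $\fMw$ only amounts to a smooth base change — $\fMw\to\fM_1$ is étale by the proposition above, and the tautological map $\MPd\to\fMw$ is the one defined by $c_1(f^*\sO_{\Pn}(1))$ — so the local equations of $\MPd$ inside a smooth ambient stack are unchanged, only reorganized by the weight data. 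Consequently $\wMPd=\MPd\times_{\fMw}\fM\uwt_{1,[d]}$ is étale-locally the strict (or total) transform of these local equations under the explicit sequence of blowups of a smooth stack along the smooth centers $\Theta_2,\dots,\Theta_d$.

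Next I would carry out the local computation of this transform. Over a chart of $\fMw$ adapted to a weighted rooted tree, the functions $\zeta_i$ are monomials in the node-smoothing parameters (the functions whose vanishing cuts out components of the nodal curve), and the centers $\Theta_k$ are, in these coordinates, the loci where the ghost core acquires $k$ attached rational trees — so they are defined by the vanishing of the "branch" parameters in groups of size $k$. Blowing up along such monomial-type smooth centers, and tracking the proper transform of the ideal $(\zeta_1,\dots,\zeta_d)$, one finds after all $d$ blowups that the transformed local equations become, in suitable coordinates, of the form $x_{i}\, y = 0$ or products of coordinate functions times a unit — i.e., the local model of a normal crossing divisor in a smooth stack. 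This is exactly the content promised by Theorem \ref{thm:equ3} (the lifted local equations), so the bulk of this step is to cite \ref{thm:equ3} and observe that its normal-form conclusion is, verbatim, the étale-local statement "$\wMPd$ is smooth locally a product of smooth factors glued along coordinate hyperplanes." From this, DM-ness follows because the automorphism groups are finite (inherited from $\MPd$, a DM stack, under the fiber product with a representable morphism of Artin stacks), and normal crossings follows from the local normal form.

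The main obstacle — and the reason the two-stage strategy of the paper is needed — is controlling the proper transform of the ideal $(\zeta_1,\dots,\zeta_d)$ through the \emph{entire} tower of $d-1$ blowups simultaneously, rather than one at a time in isolation. The centers $\Theta_k$ are nested only after taking proper transforms, and the monomials $\zeta_i$ interact with each successive exceptional divisor; one must show that at each stage the proper transform of the center remains smooth (already asserted in \S\ref{inductive-blowups}) \emph{and} that the proper transform of the equations stays in the expected monomial normal form, so that the induction can proceed. This requires the careful bookkeeping of exponents in the weighted-rooted-tree combinatorics — precisely the analysis behind Theorem \ref{thm:equ3}. Once that normal form is in hand, assembling it into the global statement of Theorem \ref{thm:m1} is formal: normal crossing singularities and the DM property are both étale-local and stable under the smooth base change $\fMw\to\fM_1$, so the local models glue to give the theorem.
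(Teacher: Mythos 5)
Your proposal is correct and follows essentially the same route as the paper: reduce to the explicit local models over $\fD_1$/$\fMw$ (Theorem \ref{thm:equ1}), track their pullbacks through the tower of blowups via the weighted-rooted-tree combinatorics until only path trees remain, and read off the normal crossing form (Theorem \ref{thm:equ3}, which the paper packages as Theorem \ref{thm:inductionCase=d} via Lemma \ref{adhoc}), with DM-ness coming from representability of the blowup over $\fMw$. One small point of precision: since $\wMPd$ is \emph{defined} as the fiber product $\MPd\times_{\fMw}\fM\uwt_{1,[d]}$, the relevant object is the \emph{total} transform (full preimage) of the local equations, not the strict transform — your hedge ``strict (or total)'' should resolve to the latter, as the exceptional factors $\zeta_{a_i}$ that survive in the pulled-back equations are exactly what produce the extra normal-crossing branches.
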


For a refined version of this theorem with explicit local equations, see
Theorem \ref{thm:inductionCase=d}.

\begin{say}
Desingularizations of the sheaves $\pi_*f^*\sO(k)$
over $\wMPd$ are essential ingredients in computing
genus one GW-invariants of complete intersections  \cite{LZ, BCOV}.
The blowup $\wMPd$ contains a primary irreducible component
whose generic points are stable maps with non-ghost elliptic core.
We denote this component by $\wMPd_0$.
The other irreducible components
are indexed by the set of all partitions of $d$. For $\mu$ either $0$ or a partition of
$d$, let
$$\tilde\pi_\mu: \tilde\cX_\mu\lra\wMPd_\mu\and \tilde f_\mu:\tilde\cX_\mu\lra\Pn
$$
be the pull back of the universal map over $\MPd$.
\end{say}

The following theorem is due to Vakil-Zinger \cite{VZ}.

\begin{theo}\lab{thm:m2}
For every $k\geq 0$, the direct image sheaf $\tilde{\pi}_{\mu*}
\tilde{f}_\mu \sta\sO_{\Pn}(k)$ is a locally free sheaf over
$\wMPd_\mu$ {with $\mu$ either $0$ or a partition of $d$. It
is of rank $kd$ when $\mu=0$ and of rank $kd+1$ otherwise.}
\end{theo}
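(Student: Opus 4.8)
The plan is to reduce the statement to a fiberwise cohomology computation over the blown-up Artin stack $\wfMw$, using the explicit complex $\sR^\cdot=[\sO_\cV^{\oplus(d+1)}\xrightarrow{\varphi}\sO_\cV]$ that computes $R^\cdot\pi_*\sO_\cC(\cD)$ together with its pullback to the blowup. Since $\tilde f_\mu\sta\sO_{\Pn}(k)$ on a curve $C$ has $h^1=0$ exactly when the weight $w=c_1(f\sta\sO(1))$ restricted to $C$ is ``positive enough,'' the obstruction to local freeness of $\tilde\pi_{\mu*}\tilde f_\mu\sta\sO_{\Pn}(k)$ is concentrated along the loci where the core curve carries zero weight — precisely the $\Theta_k$ that were blown up. First I would set up, on a chart $\cV\subset\fD_1$ (equivalently a chart of $\fMw$), the locally free resolution of $R\pi_*\sO_\cC(\cD^{\otimes k})$ analogous to $\sR^\cdot$, whose differential after trivialization has the form $\varphi=(0,\zeta_1,\dots,\zeta_d)$ with each $\zeta_i$ a monomial in the boundary parameters (this is the content of the local structure results, Theorems \ref{thm:localEquTree}, \ref{thm:equ1}, \ref{thm:equ3}). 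The cokernel sheaf is then $\sO_\cV/(\zeta_1,\dots,\zeta_d)$-twisted, and its failure to be flat over $\cV$ is exactly the local equation of $\MPd$.

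Second, I would pull this complex back along the blowup morphism $\wfMw\to\fMw$ and analyze the proper transform. The key algebraic input is that after the successive blowups along $\Theta_2,\dots,\Theta_d$, on each chart of $\wMPd=\MPd\times_{\fMw}\fM\uwt_{1,[d]}$ the ideal $(\zeta_1,\dots,\zeta_d)$ becomes principal on the proper transform of each irreducible component: i.e. on the chart lying over the component $\wMPd_\mu$ there is a coordinate function $\xi_\mu$ and units $v_i$ so that $\zeta_i=v_i\,\xi_\mu^{e_i}$ (for $\mu=0$, one reads off that the section map becomes surjective onto the relevant rank, raising the rank by one compared to the partition strata). Granting this, the restriction of the pulled-back complex $[\sO^{\oplus(d+1)}\to\sO]$ to $\wMPd_\mu$ has a map $\varphi|_\mu$ whose image is a line bundle (generated by the gcd), so $R^0$ and $R^1$ of the relative complex become locally free of the asserted ranks — one checks $\rank=kd$ for $\mu=0$ and $kd+1$ otherwise by a Riemann–Roch/Euler-characteristic count: $\chi(\tilde f_\mu\sta\sO(k))=kd$ on a genus-one curve, and $h^1$ jumps by exactly $1$ precisely on the non-primary components where the core is a ghost.

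The main obstacle is the second step: controlling what happens to the ideal $(\zeta_1,\dots,\zeta_d)$ and to the cokernel sheaf under the iterated blowups, simultaneously on all the irreducible components and at their mutual intersections. Concretely one must (a) identify, in the explicit coordinates on $\fM\uwt_{1,[d]}$, the proper transforms of the $\Theta_k$ and the exceptional divisors, (b) show the base change $\MPd\times_{\fMw}\fM\uwt_{1,[d]}$ is computed by the naive pullback of $\sR^\cdot$ (no higher $\Tor$ interference), and (c) verify the monomial ideal $(\zeta_1,\dots,\zeta_d)$ is ``resolved'' — becomes principal on each component — after exactly these blowups. Step (c) is the combinatorial heart: it should follow from the description of the $\zeta_i$ as products over the rational-tree structure of the nodal curve (so the exponents match the order of attachment of the $k$ trees to the ghost core), together with the normal-crossing statement of Theorem \ref{thm:m1}. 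Once these are in place, local freeness and the rank count are formal consequences of the fact that a two-term complex of free modules whose differential has principal (monomial, hence locally a single power of a coordinate) image has flat cokernel with fiber dimension jumping in a controlled way — and I would phrase the final rank assertion as a direct Euler-characteristic computation combined with the vanishing of $R^1$ on the open locus where the core carries positive weight, which by semicontinuity and the explicit local model extends to local freeness on all of $\wMPd_\mu$.
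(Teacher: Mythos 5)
Your plan is essentially the paper's own proof: both reduce $\tilde\pi_{\mu*}\tilde f_\mu\sta\sO_{\Pn}(k)$ to the kernel of the two-term complex with $\varphi_i=\zeta_{[\delta_i,a]}$, pull back along the blowup, and use that after the blowups every chart is modeled on a path tree $o[v_1[\cdots[v_r]]]$ so that all the $\varphi_j$ become divisible by the single monomial $z_{v_1}\cdots z_{v_r}$, which is a nonzerodivisor on $\wMPd_0$ and identically zero on the components indexed by partitions, giving kernels of rank $kd$ and $kd+1$ respectively. The ``combinatorial heart'' you defer in step (c) is exactly what the paper's Lemmas \ref{mon2}, \ref{Zgk} and \ref{adhoc} supply, so the outline is correct and complete modulo those inputs.
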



We will treat the two theorems in the reverse order
from what was done in \cite{VZ}. Specifically, we will first prove a
structure result for the direct image sheaf $\pi\lsta f\sta
\sO_{\Pn}(k)$ for all positive integers $k$. We
then derive local defining equations for $\MPd$ and
$\wMPd$, and obtain theorems \ref{thm:m1} and \ref{thm:m2} as corollaries.

\subsection{Local defining equations for $\MPd$}\lab{sec:eq1}

\begin{say}\lab{(C,D)}
For later use, we form the Artin stack of stable pairs $(C,D)$ of
(connected) nodal elliptic curves $C$ with effective divisors $D
\subset C$ supported on the smooth loci of $C$. Here $(C,D)$ is stable if
any smooth rational curve in $C$ disjoint from $D$ contains at least three
nodes of $C$.
We denote this
stack by $\fD_1$. It maps to $\fMw$ by
sending $(C,D)$ to $(C, c_1(D))$. The morphism $\fD_1\to\fM\uwt_1$ is
smooth and has connected fibers. In particular, the connected components
of $\fD_1$ are indexed by the degree of the
effective divisors.
\end{say}

\begin{say}\lab{simplDivisor} For any closed point $[u,C] \in \MPd$, we let
$$u=[u_0, \cdots, u_n]: C \lra \Pn, \quad u_i \in \Gamma (C, u^* \sO_{\Pn}(1)),
$$
be the associated stable morphism. By a change of homogeneous
coordinates on $\Pn$, we can assume that $u^{-1}_0(0) \subset C$
is a smooth simple divisor $D$ of degree $d$. Once $D  \subset
C$ is fixed, we can choose $u_0$ to be the constant section $1
\in \Gamma (C,  \sO_{C}) \subset \Gamma (C,
\sO_{C}(D))$. Under this convention, the remaining sections $u_1,
\cdots, u_n$ are uniquely determined by the morphism $u: C \to
\Pn$. Consequently, deforming $u: C \to \Pn$ is
equivalent to deforming the pair $(C, D)$ and the sections
$u_1, \cdots, u_n$.
\end{say}

\begin{say} We next choose a small
open neighborhood $[u,C]\in U$ in  $\MPd$. Let
$$\pi: \cX \lra U \and f:  \cX \lra \Pn$$ be the
the universal family over $U$ with  $\cX_0 \cong C$  the fiber
over $[u,C]$.
Let $\cS=f^*[x_0 =0]$ {(where $[x_0,\cdots,x_n]$ are the
homogeneous coordinates of $\Pn$)}. By shrinking $U$ if necessary,
we can assume that $\cS$ is away from the singular points of the
fibers $\cX/U$.
This way, we obtain a morphism
$$U \lra \fD_1, \quad [u', C'] \in U \longmapsto (C',  C'
\cap\cS).$$
\end{say}

\begin{say}\lab{deform}
We now construct the deformation space of the data $(C,
D, u_1, \cdots, u_n)$. We let $\cV \to  \fD_1$ be a smooth
chart of $(C, D) \in \fD_1$ that contains the image of $U\to\fD_1$; let $(\cC, \cD)$ be the
tautological family over $\cV$ with $(\cC_0, \cD_0) =(C, D)$
for some point $0 \in \cV$, and let
$$\rho: \cC \lra \cV
$$
be the
projection. We set $\sL = \sO_{\cC}(\cD)$. Set theoretically, our
deformation space is the union $\bigcup_{v \in \cV}
H^0(\cC_v, \sL|_{\cC_v})^{\oplus n}$. The deformation is
singular at points where the core curve of $\cC_v$ is ghost
due to $H^1(\cC_v,\sL_v)\ne 0$.

The algebraic construction of the deformation space is via a standard trick.
\end{say}

\begin{say}\lab{A}
By shrinking $\cV$ if necessary, we can find a section $\cA
\subset \cC$ of $\cC/\cV$, away from $\cD$, such that it passes through smooth
parts of the core curves of all fibers of $\cC/\cV$. Because
$\sL(\cA)$ is effective and has positive degree on the core curve
of every fiber of $\cC/\cV$, we have \beq R^1 \rho_* \sL(\cA)=0
\and \hbox{$\rho_* \sL(\cA)$ is locally free}.
\eeq
We let $\cE_{\cV}$
be the total space of the vector bundle $\rho_* \sL(\cA)^{\oplus
n}$ and let $p: \cE_{\cV} \to \cV$ be the projection.
Then the tautological restriction homomorphism
$$\text{rest}: \rho_* \sL(\cA)^{\oplus n}\lra \rho_* (\sL(\cA)^{\oplus n}|_\cA)
$$
lifts to a section
\beq \lab{sec1} F \in \Gamma
(\cE_{\cV}, p^*\rho_* (\sL(\cA)^{\oplus n}|_{\cA})).
\eeq
\end{say}

\begin{theo}\lab{thm:equ1} Let $\cU= \cV \times_{\fD_1} U$.
Then there is a canonical open immersion $\cU \to \;${\rm
(}$F=0${\rm )} $\sub \cE_{\cV}$.
\end{theo}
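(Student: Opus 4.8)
The plan is to realize the universal family $\pi\colon\cX\to U$ as the family $(\cC,\cD)$ pulled back from $\cV$, together with the extra data of the sections $u_1,\dots,u_n$, and then to identify this data with a point of $\cE_\cV$ lying on $(F=0)$. The starting observation is \ref{simplDivisor}: after the coordinate normalization $u_0\equiv 1$, a point of $\cU$ over $[u',C']$ consists of the pair $(C',C'\cap\cS)\in\fD_1$ together with $n$ sections $u_i'\in\Gamma(C',\sO_{C'}(C'\cap\cS))$. Since $\cU=\cV\times_{\fD_1}U$, the $\fD_1$-part is exactly a map to $\cV$, so it remains to package the sections $\{u_i'\}$ as a point of the vector-bundle total space $\cE_\cV$ in a way that is cut out precisely by $F$.

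The key step is to use the auxiliary section $\cA\subset\cC$ from \ref{A} and the short exact sequence
\[
0\lra \sL \lra \sL(\cA) \lra \sL(\cA)|_\cA \lra 0
\]
on each fiber $\cC_v$. Pushing forward along $\rho$ and using $R^1\rho_*\sL(\cA)=0$, we get an exact sequence of sheaves on $\cV$
\[
0\lra \rho_*\sL \lra \rho_*\sL(\cA) \xrightarrow{\ \mathrm{rest}\ } \rho_*(\sL(\cA)|_\cA)\lra R^1\rho_*\sL\lra 0 .
\]
Taking $n$-fold direct sums and restricting to a fiber, a point $v\in\cV$ together with a tuple $(u_1',\dots,u_n')\in H^0(\cC_v,\sL_v)^{\oplus n}$ is, by this sequence, the same as a point $e\in\cE_\cV$ (i.e.\ an element of $\rho_*\sL(\cA)^{\oplus n}|_v$) satisfying $\mathrm{rest}(e)=0$; and $\mathrm{rest}(e)=0$ is by definition the vanishing of the tautological section $F$ at $e$ (cf.\ \eqref{sec1}). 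Thus $H^0(\cC_v,\sL_v)^{\oplus n}$ is canonically the fiber of $(F=0)\subset\cE_\cV$ over $v$. Running this fiberwise identification in families — using that the exact sequence above is an exact sequence of \emph{sheaves} on $\cV$, so it behaves well under base change to $U$ — produces a morphism $\cU\to(F=0)$, and one checks it is the canonical one: it sends $[u',C']\in\cU$, with its normalized sections $u_i'$, to the corresponding point of $(F=0)$.

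Finally I would prove this morphism is an open immersion. It is injective on points because $(C',C'\cap\cS)$ together with $(u_1',\dots,u_n')$ reconstructs $[u',C']$ uniquely (\ref{simplDivisor}), and the image consists exactly of those $(v,e)$ whose associated curve $\cC_v$ carries a genuine stable map — which is an open condition (stability, and the genericity that $\cS$ avoids nodes, are open; automorphisms are handled by working with the charts). To upgrade "injective, open image" to "open immersion" I would compare deformation theories: both $\cU$ and $(F=0)$ carry the universal deformation of the data $(C,D,u_1,\dots,u_n)$ in a neighborhood of the central fiber — on the $\cU$ side by construction of $U$ as a chart of $\MPd$ together with the chart $\cV$ of $\fD_1$, and on the $(F=0)$ side because $\cE_\cV\to\cV$ is the total space of $\rho_*\sL(\cA)^{\oplus n}$ and $F=0$ imposes exactly the condition that the $\cA$-values vanish, i.e.\ selects actual sections of $\sL$. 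Matching the two tangent-obstruction descriptions shows the map is étale onto its image, hence an open immersion.

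The main obstacle I anticipate is the families (rather than fiberwise) version of the identification: one must be careful that forming $H^0$ commutes with the relevant base changes. This is exactly what the choice of $\cA$ buys us — $R^1\rho_*\sL(\cA)=0$ and $\rho_*\sL(\cA)$ locally free guarantee cohomology and base change for the middle term, so the four-term sequence of sheaves on $\cV$ stays exact after pulling back to $U$, and $\mathrm{rest}$ (equivalently $F$) continues to cut out $\rho_*\sL^{\oplus n}$ inside $\rho_*\sL(\cA)^{\oplus n}$ over all of $\cV$, not just at closed points. Granting this, the rest is bookkeeping with the normalization $u_0\equiv 1$ and the openness of stability.
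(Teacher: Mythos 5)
Your construction of the morphism $\cU\to(F=0)$ is essentially the paper's: both use the sequence $0\to\sL\to\sL(\cA)\to\sL(\cA)|_\cA\to 0$, the vanishing $R^1\rho_*\sL(\cA)=0$ for base change, and the observation that the normalized sections $u_1,\dots,u_n$ give a section of $\alpha^*\rho_*\sL(\cA)^{\oplus n}$ killed by the restriction to $\cA$, i.e.\ by $F$. That half is fine.

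The gap is in the second half, where you want to upgrade ``injective with open image'' to ``open immersion'' by \emph{matching tangent--obstruction descriptions}. The scheme $(F=0)$ is exactly the non-reduced, singular object the whole paper is about, and for such targets a bijection on points together with isomorphisms of tangent spaces does not give an isomorphism (compare $\Spec \kk[x]/(x^2)\to\Spec \kk[x]/(x^3)$); you would need full formal \'etaleness, i.e.\ an identification of the entire deformation functors, and that is precisely the content you have not supplied. The paper avoids this by constructing the inverse morphism explicitly, and the mechanism is worth internalizing: on $Z=(F=0)$ the tautological section $s_\tau$ of $\tau^*p^*\rho_*\sL(\cA)^{\oplus n}$ composed with restriction to $\cA$ vanishes \emph{by definition of $Z$} (as a scheme, not just on points), hence factors through genuine sections $s''_1,\dots,s''_n$ of $\sL''=\sO_{\cC_Z}(\cD_Z)$ on the curve $\cC_Z\to Z$; adjoining $s''_0=1$ yields a morphism $[s''_0,\dots,s''_n]$ to $\Pn$ defined on an open set, which is a family of stable maps over an open $W\sub Z$ containing the image of $\cU$. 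The universal property of $\MPd$ then gives $\eta:W\to\MPd$, and the pair $(\eta,p\circ\tau)$ lifts to $\zeta'':W_0=\eta\upmo(U)\to\cU=\cV\times_{\fD_1}U$; one checks $\zeta''$ and $\mu''$ are mutually inverse. In short, the missing idea is that the scheme-theoretic vanishing of $F$ over $(F=0)$ itself produces a classifying morphism back to the moduli space; without it (or an equivalent functorial argument), your \'etaleness claim is an assertion rather than a proof.
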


These local equations are made explicit in Theorem
\ref{thm:localEquTree}.


\begin{say}
We let $\widetilde\fM\uwt_1\to\fM\uwt_1$ be the blowup described in \ref{inductive-blowups}.
We form the fiber product
$$\widetilde\fD_1 = \fD_1
\times_{\fM_1\uwt} \widetilde\fM_1\uwt ,\quad \widetilde\cV=
\cV \times_{\fD_1}\widetilde\fD_1\and \widetilde\cU=\widetilde\cV\times_{\fD_1} U.
$$
Let $\eta: \widetilde\cV\to\cV$ be the projection. Then
$$\cE_{\widetilde\cV}=\cE_\cV\times_\cV\widetilde\cV
$$
is the total space of the pull back bundle $\eta\sta \rho\lsta\sL(\cA)^{\oplus n}$.
The immersion $\cU\to\cE_\cV$ of Theorem \ref{thm:equ1} induces an
immersion $\widetilde\cU\to \cE_{\widetilde\cV}$.
\end{say}

\begin{theo} \lab{thm:equ3}
Let $\xi: \cE_{\widetilde\cV}\to\cE_\cV$ be the projection and let
$\widetilde F=\xi\sta(F)$ be the pull back section in $\xi\sta p\sta\rho\lsta(\sL(\cA)^{\oplus n}|_\cA)$.
After shrinking $\cV$ if necessary and fixing a suitable
trivialization
$\xi\sta p^* \rho_*(\sL(\cA)^{\oplus n}|_{\cA})\cong\sO_{\cE_{\widetilde\cV}}^{\oplus n}$,
we can find $(d+n)$ regular functions $w_1,\cdots,w_n, \xi_1, \cdots, \xi_d$ over
$\cE_{\widetilde\cV}$ such that
$$
\widetilde F =(w_1 \xi_1\cdots\xi_d, \cdots, w_n \xi_1\cdots\xi_d).
$$
Further, each $w_i$ and $\xi_j$ has smooth vanishing locus and
the vanishing locus of their product $(w_1 \cdots w_n\cdot \xi_1
\cdots \xi_d=0)$ has normal crossing singularities.
\end{theo}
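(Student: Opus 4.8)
The plan is to deduce Theorem \ref{thm:equ3} from the explicit description of $F$ coming from Theorem \ref{thm:equ1} (made fully explicit in Theorem \ref{thm:localEquTree}) by a careful analysis of how the section behaves after the blowup $\widetilde\fM\uwt_1\to\fM\uwt_1$. Recall from the outline that on $\cV$ the restriction homomorphism underlying $F$ has, under a suitable trivialization, the form $\varphi=(0,\zeta_1,\dots,\zeta_d)$ where each $\zeta_i$ is a product of pullbacks of the coordinate functions on $\fD_1$ cutting out the irreducible components of the nodal curves whose core is a ghost; more precisely, for a fixed rooted-tree combinatorial type the $\zeta_i$ are monomials $\prod_{\alpha} t_\alpha^{m_{i,\alpha}}$ in the smoothing parameters $t_\alpha$ of the nodes lying on the chains from the core out to the $i$-th point of $D$. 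Since $F$ is the $n$-fold direct sum of (essentially) this single section twisted by the bundle $\rho_*\sL(\cA)^{\oplus n}$, after trivializing $p^*\rho_*(\sL(\cA)^{\oplus n}|_\cA)\cong\sO^{\oplus n}$ we get $F=(f_1,\dots,f_n)$ where the $f_i$ are fiber-coordinate-linear combinations of the $\zeta_j$'s — i.e. each $f_i$ lies in the ideal generated by $\{\zeta_1,\dots,\zeta_d\}$ over $\cE_\cV$, and conversely the $\zeta_j$ can be recovered.

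First I would recall, from the structure of $\fM\uwt_1$ and the loci $\Theta_k$, that on a chart $\cV$ the image of $\Theta_k$ is the locus where the core is ghost and exactly $k$ rational trees are attached; in the monomial description this corresponds to the vanishing of the "first" smoothing parameter on each of the $k$ attaching nodes, so the $\Theta_k$-locus is where at least $k$ of a distinguished collection of node-parameters vanish. The successive blowup along (proper transforms of) $\Theta_2,\Theta_3,\dots$ is then, étale-locally on $\cV$, precisely the iterated blowup that turns the ideal $(\zeta_1,\dots,\zeta_d)$ — a monomial ideal in the node parameters — into a principal ideal. Concretely, on each chart of $\widetilde\cV$ one introduces an exceptional parameter and writes $\zeta_j = (\xi_1\cdots\xi_d)\cdot u_j$ where $\xi_1\cdots\xi_d$ is the generator of the pulled-back ideal and the $u_j$ are regular functions with at least one of them a unit; moreover one arranges, by the order in which $\Theta_2,\dots,\Theta_d$ are blown up, that the $\xi_\ell$ are independent coordinate functions (smoothing parameters or exceptional parameters) and the residual factors $u_j$ become, after a further linear change among the $n$ copies, a single tuple $(w_1,\dots,w_n)$ with each $w_i$ a coordinate. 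Substituting back, $\widetilde F = \xi^*F$ becomes $(w_1\,\xi_1\cdots\xi_d,\dots,w_n\,\xi_1\cdots\xi_d)$ in the new trivialization, which is the claimed normal form.

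The remaining assertions are then local coordinate checks on the charts of $\cE_{\widetilde\cV}$: each $w_i$ and each $\xi_j$ is one of the ambient coordinate functions (a smoothing parameter, an exceptional parameter, or a linear-type residual coordinate), so its zero locus is smooth; and the product $w_1\cdots w_n\,\xi_1\cdots\xi_d$ is a product of distinct coordinate functions — this requires checking that no two of the $\xi_\ell$'s coincide and that the $w_i$'s are genuinely among the remaining coordinates, which follows from the fact that the $\zeta_j$ are "distinct enough" monomials (indexed by distinct points of $D$ with distinct attaching data) combined with the blowup bookkeeping — hence its vanishing locus is a normal crossings divisor. I would carry this out by induction on $d$ (equivalently on the number of blowups $\Theta_2,\dots,\Theta_d$ performed), the inductive step being exactly one blowup along the proper transform of some $\Theta_k$, at which stage one checks that one more factor splits off and the monomial exponents drop, mirroring the induction in Theorem \ref{thm:inductionCase=d}.

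The main obstacle is the bookkeeping in the inductive blowup step: one must show that the proper transform $\Theta_{[k-1],k}$ pulls back the monomial ideal $(\zeta_1,\dots,\zeta_d)$ in a controlled way so that after all blowups the generator factors as a product of \emph{distinct} coordinate functions $\xi_1\cdots\xi_d$ and the residual tuple $(w_1,\dots,w_n)$ is simultaneously put in the stated form in \emph{every} chart of the blowup — in particular that the trivialization of $\xi^*p^*\rho_*(\sL(\cA)^{\oplus n}|_\cA)$ can be chosen globally on $\widetilde\cV$ (after shrinking $\cV$) rather than chart-by-chart. Handling this uniformly across the combinatorial types of ghost-core curves, and verifying the normal-crossing claim for the full product including the $w_i$'s, is where the real work lies; everything else is a direct consequence of the explicit local equations of $\MPd$ established earlier together with the definition of the iterated blowup $\widetilde\fM\uwt_1$.
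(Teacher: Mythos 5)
Your proposal is correct and follows essentially the same route as the paper: start from the explicit monomial form of $F$ coming from Theorems \ref{thm:key} and \ref{thm:localEquTree}, track the equations chart-by-chart through the iterated blowup, factor out the common monomial $\xi_1\cdots\xi_d$, and absorb the residual unit combination into new fiber coordinates $w_e$. The ``bookkeeping'' you single out as the main obstacle is exactly what the paper carries out via the combinatorics of monoidal transforms of terminally weighted trees (Lemmas \ref{mon1}, \ref{mon2}, \ref{Zgk}, and \ref{adhoc}): every tree in $\Lambda_{d,[d]}$ is a path tree, for which $\Phi_{\gamma,e}=w_e\, z_{v_1}\cdots z_{v_r}$, which is the stated normal form.
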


Note that some $\xi_1, \cdots, \xi_d$ may be invertible.
For more explicit local equations for $\wMPd$; see Theorem
\ref{thm:inductionCase=d}.

\section{Combibatorics of the Dual Graphs of Nodal Curves}

In this section, we discuss the combibatorics of the dual graphs
of nodal curves and introduce terminally weighted trees for
weighted elliptic nodal curves. This combibatorics is not strictly
necessary for our presentation, but they
do make our exposition more intuitive and formulas more elegant.

\subsection{Terminally weighted rooted trees}

\begin{say}
Let $\gamma$ be a connected rooted tree with $o$ its
root\footnote{All trees are connected in this article.}. The root $o$
defines a unique partial ordering on the set of all vertices of
$\gamma$ according to the descendant relation so that the root is the
unique {\sl minimal} element; all non-root vertices are
descendants of the root $o$. We call a vertex terminal if it has no
descendants; in the combinatorial world, this is also called a
leaf. Equivalently, a non-root terminal vertex is a non-root
vertex that has exactly one edge connecting to it. Following this
convention, the root is terminal only when $\gamma$ consists of a
single vertex. In the following, given a rooted tree $\gamma$ we
denote by $\Vg$ the set of its vertices, by $\Vg\sta$ its
non-root vertices, and by $\Vg^t$ its terminal vertices. Note that
for any vertex $v$ in $\gamma$, there is a unique (directed) path
between $o$ and $v$; this is the maximal chain of vertices
$o=v_0 \prec v_1 \prec \cdots \prec v_r=v.
$
\end{say}

\begin{say}
Next we consider weighted rooted trees. A weighted rooted tree is a
pair $(\gamma, w)$ consisting of a rooted tree $\gamma$ together
with a function on its vertices $w\mh \text{Ver}(\gamma)\to \ZZ^{\geq 0}$,
called the weight function. A vertex is positive if its weight is positive;
a ghost vertex is a vertex with zero weight. The total weight of the tree is the sum
of all its weights.
\end{say}

In this paper, we will consider only {\it terminally weighted}
rooted trees.

\begin{defi}
A weighted rooted tree $(\gamma, w)$ is terminal if all terminal
vertices are positive and all positive vertices are terminal; it
is called stable (resp. semistable) if every ghost non-root vertex
has at least three (resp. two) edges attached to it.
\end{defi}

When the weight $w$  is understood, we will use $\gamma$ to denote the weighted rooted
tree $(\gamma,w)$ as well as its underlying rooted tree $\gamma$ with weights removed.

\vskip 4.5cm

\begin{picture}(3, 15)
\put(45,2){ \psfig{figure=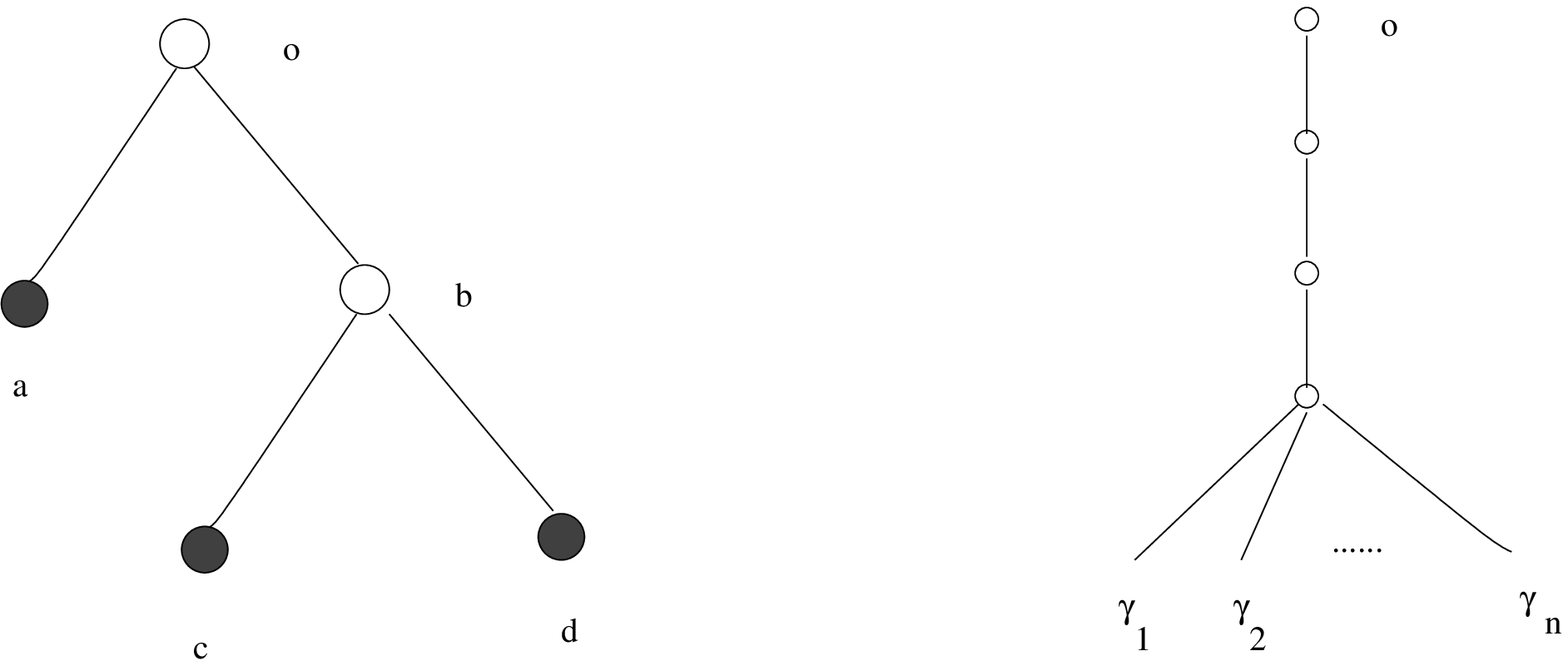, height = 4cm,width=9cm} }
\end{picture}

\vskip .5cm

\centerline{Figure 1.  Two trees}

\subsection{Operations on terminally weighted trees}

\begin{say}
For positive $d$, we let $\Lambda_d$ (resp. $\Lambda^{ss}_d$) be
the set of all stable (resp. semistable) terminally weighted
rooted trees of total weight $d$. The set $\Lambda_d$ is finite,
while $\Lambda_d^{ss}$ is infinite. The set $\Lambda_d^{ss}$
admits the following geometrical operations that will be useful
for our discussion.
\end{say}

\begin{say}
The first operation is {\it pruning} a tree. Given $\gamma\in
\Lambda_d^{ss}$, to prune $\gamma$ from a vertex $v$,  we simply
remove all descendants (i.e. those $u$ with $u\succ v$)
and the 
edges connecting the removed vertices. After pruning $\gamma$ from $v$, the new graph
has $v$ as its terminal vertex. 
If $\gamma$ is a weighted
tree, we define the weight of $v$ in the pruned tree to be the sum of
the original weight of $v$ and the weight of the removed vertices;
we keep the weights of the other vertices
unchanged. Note that the
resulting pruned tree is terminally weighted as well.

The second operation is {\it collapsing} a vertex. 
Collapsing a vertex $v$ in $\gamma$ is a two-step process: first
merge $v$ with its unique ascendent, removing the edge
between them and assigning the sum of their weights to the merged vertex,
and then prune the resulting tree along all
positively weighted non-terminal vertices, repeating the process as long
as possible.

The third operation is {\it specialization}: it is the inverse
operation of a collapsing.

The fourth operation is {\it advancing} a vertex. Let $v$ be a
vertex in $\gamma$ and let $\bar v$ be its direct ascendant. To advance $v$, replace every edge connecting $\bar{v}$
to a direct descendant $v_i$ other than $v$ by an edge connected $v_i$ to $v$
and then prune the resulting tree along all positively
weighted non-terminal vertices, repeating
the process as long as possible.

\vskip 9cm

\begin{picture}(3, 15)
\put(20,2){ \psfig{figure=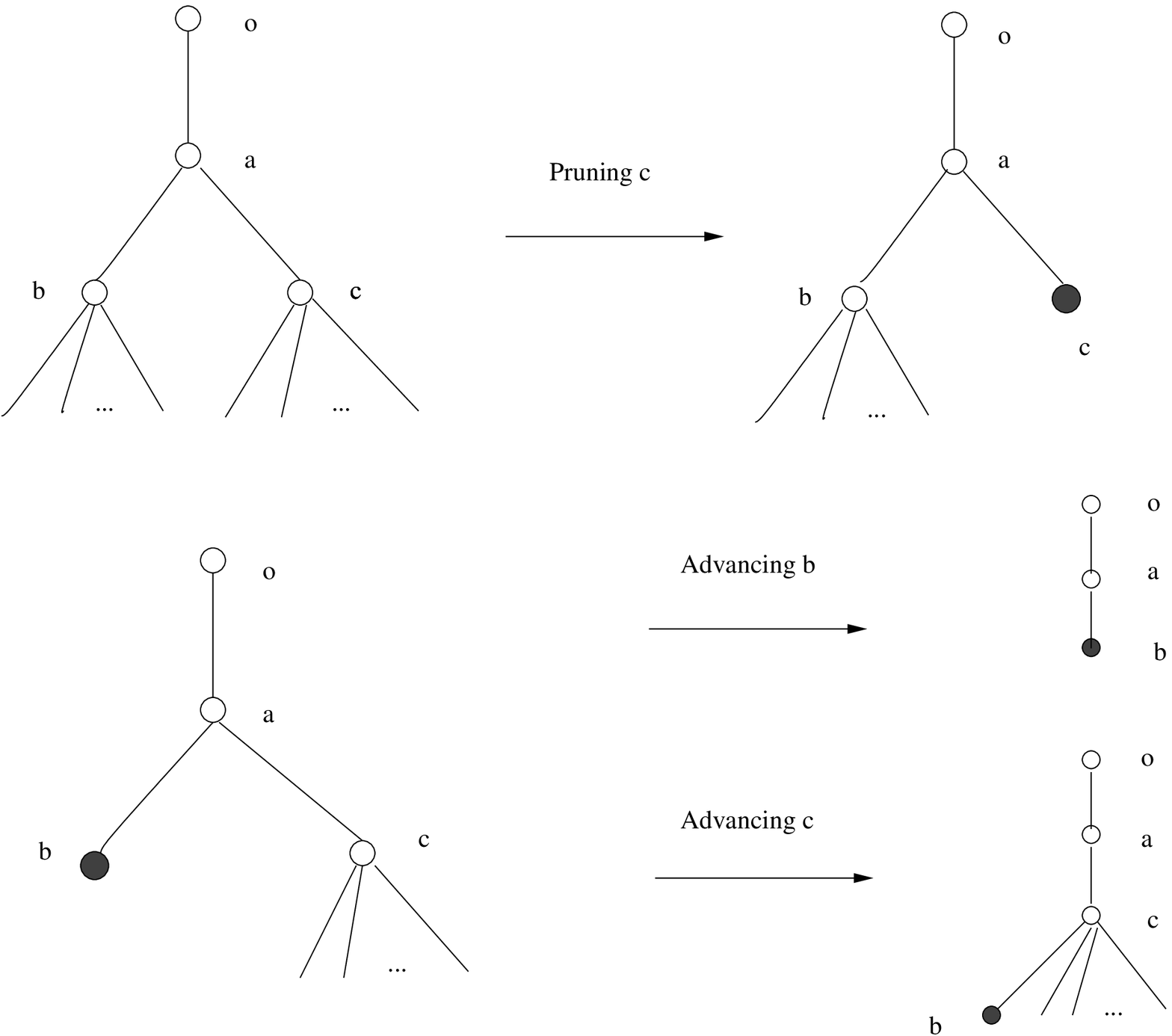, height = 8.5cm,width =
10cm} }
\end{picture}

\vskip .5cm

\centerline{Figure 2.  Operations on weighted trees}

\end{say}



\bigskip

\begin{say} Instead of drawing a picture, a weighted rooted tree can also be
described compactly as follows. Here is an example. Let
 $$\gamma=o[a(2),b[c(1),d(1)]]. 
 $$
This is a weighted rooted tree  whose root is $o$; the other vertices
are labelled by $a,b,c,d$. The vertices inside a square bracket
are the descendants of the vertex immediately proceeding the bracket; the
weights of the terminal vertices $a$, $c$, and $d$ are indicated in the
following parenthesis.
Collapsing $b$, we obtain $o[a(2),c(1),d(1)]$.
Advancing $b$, we get $o[b[a(2),c(1),d(1)]]$. Advancing
$a$, we obtain $o[a(4)]$.



When the weight function is irrelevant to the
discussion, we will drop any reference to weights. For example, the above tree
$\gamma$ would then be written as $\gamma=o[a,b[c,d]]$
\end{say}

\begin{say}
In this compact representation of a tree, pruning a
tree from a vertex $v$ means removing the bracket
immediately after $v$  and assigning it the total weight of the
vertices inside the bracket. Collapsing a ghost vertex $v$ is
removing the vertex $v$ as well as the closest brackets $``$[,]$"$
associated to it.
Advancing a ghost vertex $v$ is moving all the other
vertices located inside of the same square bracket as $v$ into the square
bracket following $v$.
\end{say}

\begin{say}
Observe that advancing can make a stable tree semistable. For
example, consider the stable tree $o[a[b[d,e], c]]$. If we advance
$b$, we obtain $o[a[b[c, d,e]]]$, which is  semistable but not
stable, since the vertex $a$ has only two edges attached to it.
\end{say}

\subsection{Monoidal transformations of weighted trees}\lab{MonTrees} $\\$

To keep track of the changes of the strata of $\MPd$ after blowups, we need
the notion of monoidal transformations of weighted trees. We begin with the following.

\begin{defi}
Let $\gamma$ be a semi-stable terminally weighted tree
with root $o$ and at least one non-root vertex.
The trunk of $\gamma$ is the maximal chain $o=v_0\prec\ldots\prec v_r$
of vertices in $\gamma$ such that each vertex $v_i$ with $i<r$
has exactly one immediate descendant.
\end{defi}

In this case, we abbreviate the trunk by
$\overline{ov_r}$; we call $v_r$ the branch vertex of $\gamma$ if it is not a {\sl terminal} vertex. Otherwise
we call $\gamma$ a path tree. When $v_r=o$,
we say the tree has no trunk.
Figure 1 shows two trees: the first one has no trunk; the second one has a trunk.

Let $\gamma$ be a tree with trunk $\overline{o\,v_r}$. Then
$\gamma$ can be obtained by attaching $\ell>1$ rooted trees, called branches,
 $\gamma_1',\cdots,\gamma_\ell'$, to the trunk so
that  the roots of $\gamma_i'$ are direct descendants of $v_r$. 
According to our
convention, $\gamma$ can be expressed as
$$\gamma=\overline{o\,v_r}[\gamma_1',\cdots,\gamma_\ell']:=o[v_1[\cdots[v_r[\gamma_1',\cdots,\gamma_\ell']]]].
$$
The tree $\gamma$ has no branches if and only if it is a path-tree.


\begin{defi}
Let $\textnormal{br}(\gamma)$ denote the number
of branches of $\gamma$.
We call $\gamma$ simple if all of the branches are stable.
\end{defi}



\begin{defi}
A monoidal transform of a simple terminally weighted tree $\ga$ is
a tree obtained by advancing one of the immediate descendants
of the branch vertex of~$\ga$, if $\ga$ has a branch vertex.
We denote the set of monoidal transforms of $\gamma$ by
$\Mg$.
\end{defi}

Note that every tree in $\Lambda_d$
is simple, and if $\ga$ is a simple tree, so is every monoidal transform of $\ga$.

\begin{lemm}\lab{mon1}
Let $\gamma$ be a simple terminally weighted tree and
$\tilde\gamma\in \Mg$. Then either
$\bran(\tilde\gamma)=0$, which is when $\tilde\gamma$ is a path-tree,
or $\bran(\tilde\gamma)\geq \bran(\gamma)+1$. The same
conclusion holds when $\tilde\gamma$ is a collapsing of $\gamma$ at a
direct descendant of the branch vertex.
\end{lemm}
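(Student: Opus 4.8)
The statement is a purely combinatorial fact about what happens to the trunk/branch structure of a simple terminally weighted tree $\gamma$ under the two operations of interest: advancing one immediate descendant $v$ of the branch vertex $v_r$ (giving a monoidal transform $\tilde\gamma \in \Mg$), and collapsing such a $v$. The plan is to analyze directly how the trunk of $\tilde\gamma$ is formed. Write $\gamma = \overline{o\,v_r}[\gamma_1',\dots,\gamma_\ell']$ with $\ell = \bran(\gamma) \geq 2$ (if $\gamma$ has no branch vertex there is nothing to prove; and by simplicity each $\gamma_i'$ is stable). Say $v$ is the root of $\gamma_1'$. Advancing $v$ reattaches the roots of $\gamma_2',\dots,\gamma_\ell'$ as direct descendants of $v$, then prunes along positively weighted non-terminal vertices as long as possible. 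So after advancing, $v_r$ has a unique immediate descendant, namely $v$, and hence the trunk of $\tilde\gamma$ extends from $\overline{o\,v_r}$ at least to $\overline{o\,v}$ (and possibly further, down $\gamma_1'$, until the first vertex of $\gamma_1'$ that itself branches).

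First I would treat the case where $v$ is a terminal vertex of $\gamma$ (equivalently $\gamma_1'$ is a single positive vertex). Then after advancing, $v$ becomes the new branch point carrying the $\ell-1$ branches $\gamma_2',\dots,\gamma_\ell'$ — but $v$ has positive weight and is now non-terminal, so the pruning step collapses those branches into $v$, turning $v$ into a single terminal vertex of weight equal to the total weight. In that sub-case $\tilde\gamma = \overline{o\,v}$ is a path-tree and $\bran(\tilde\gamma) = 0$, which is the first alternative. Next, the case where $v$ is not terminal: then $\gamma_1'$ has its own root $v$ with some immediate descendants, and after advancing, $v$ acquires those together with the roots of $\gamma_2',\dots,\gamma_\ell'$ as immediate descendants. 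Pruning removes nothing at $v$ itself when $v$ is a ghost (by semistability of the result this is the generic situation). The key count: $v$ now has (number of immediate descendants of the root of $\gamma_1'$) $+ (\ell - 1)$ immediate descendants. Since $\gamma_1'$ was stable, the root of $\gamma_1'$ had at least $2$ immediate descendants if it was a ghost, hence $v$ now has at least $2 + (\ell-1) = \ell + 1 = \bran(\gamma) + 1$ branches (after the necessary pruning, which only merges positively-weighted non-terminal branches into their parents and so never decreases the branch count at the first branch vertex below $v_r$ — this needs a short argument that pruning along positive non-terminal vertices, performed at descendants, does not reduce the number of branches emanating from the trunk's branch vertex). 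If the root of $\gamma_1'$ was positive (hence terminal, by the terminal-weighting condition), we are back in the previous sub-case. This establishes the dichotomy for $\tilde\gamma \in \Mg$.

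For the collapsing version: collapsing $v$ merges $v$ with its unique ascendent $v_r$, assigns the sum of weights, removes the edge, then prunes along positive non-terminal vertices as long as possible. The effect on the trunk is analogous: $v_r$ and $v$ fuse into one vertex $\bar v$ which now carries the immediate descendants of $v$ together with $\gamma_2',\dots,\gamma_\ell'$ (the other branches of $v_r$). The same counting gives either a path-tree (when $v$ was terminal, so $\bar v$ becomes a single positive terminal vertex after pruning) or $\bran \geq (\ell - 1) + (\text{number of immediate descendants of }v) \geq (\ell - 1) + 2 = \bran(\gamma) + 1$, using stability of $\gamma_1'$ exactly as above. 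I would present both operations in parallel, isolating the shared combinatorial lemma.

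The main obstacle I anticipate is bookkeeping the pruning step precisely: pruning "along all positively weighted non-terminal vertices, repeated as long as possible" can cascade, and one must verify (i) that this cascade never touches the vertices on the new trunk above the new branch vertex, and (ii) that it never decreases the number of branches at that branch vertex — it can only merge a branch entirely into the branch vertex when that branch is a single positive vertex, and such a merge is what produces the path-tree alternative, so outside that case the branch count is preserved or increased. Making this pruning analysis airtight, and cleanly handling the interaction between "$v$ terminal/positive" versus "$v$ a ghost with $\geq 2$ descendants," is the real content; everything else is direct inspection of the compact bracket notation for the two operations.
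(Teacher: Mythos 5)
Your proposal is correct and follows essentially the same route as the paper's proof: a case split on whether the advanced (or collapsed) vertex $v$ is terminal, yielding a path tree in the terminal case and, in the ghost case, the count that the new branch vertex inherits the $\geq 2$ immediate descendants of $v$ together with the $\bran(\gamma)-1$ remaining branches of $v_r$, with the collapsing case handled in parallel. The pruning cascade you flag as the main obstacle is in fact vacuous in the ghost case (the operation changes no weights and creates no positively weighted non-terminal vertex other than $v$ itself when $v$ is terminal), so your argument is, if anything, more complete than the paper's two-line version.
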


\begin{proof} If $\tilde\ga$ is the result of advancing a direct descendant $v$
of the branch vertex $\bar{v}$ of $\ga$ and $v$ is not terminal,
then the direct descendants of $v$ in $\tilde\ga$ are the direct
descendants of $v$ in $\ga$ and the direct descendants of
$\bar{v}$ in $\ga$ other than~$v$. Furthermore, $v$ is the branch
vertex of $\tilde\ga$ in this case; thus,
$\textnormal{br}(\tilde\ga)\ge\textnormal{br}(\ga)+1$. On the
other hand, if $v$ is terminal in
$\ga$, then $\tilde\ga$ is the path from $o$ to $v$ in $\ga$. The
proof of the second statement is similar.
\end{proof}
\black

\begin{say}\lab{kmonTrees}  To index the strata of the various blowups of $\MPd$, we
introduce the following. We set $\Lambda_{d, [1]}=\Lambda_d$ and
define $\Lambda_{d, [k]}$ inductively for $k \ge 2$:
$$\Lambda_{d,[k]}=\{\gamma \in \Lambda_{d,[k-1]} | {\rm br} (\gamma) \ge k+1\} \cup
\{\gamma \in {\rm Mon} (\gamma') | \gamma' \in  \Lambda_{d,[k-1]},
{\rm br} (\gamma') =k \}.$$
\end{say}

\begin{lemm}\lab{mon2}
For any  $\gamma \in \Lambda_{d, [k]}$ with $1 \le k \le d$,
either $\bran (\gamma)=0$ or $k+1 \le \bran (\gamma) \le d$. In
particular,  $\Lambda_{d, [d]}$ consists of path trees only.
\end{lemm}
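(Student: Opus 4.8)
The plan is to prove Lemma \ref{mon2} by induction on $k$, using Lemma \ref{mon1} as the basic engine for controlling how the branch number changes under monoidal transforms, together with a separate bound showing $\bran(\gamma)\le d$ always.

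First I would dispose of the upper bound $\bran(\gamma)\le d$. For any simple terminally weighted tree of total weight $d$, every branch contains at least one terminal vertex (since branches are rooted trees with at least one vertex, and in a terminally weighted tree every leaf is positive), and distinct branches contain disjoint sets of terminal vertices. Each terminal vertex has weight $\ge 1$, so the number of branches is at most the number of terminal vertices, which is at most the total weight $d$. Since $\Lambda_{d,[k]}$ consists only of simple terminally weighted trees of total weight $d$ (this is part of the construction in \ref{kmonTrees}, using the closing remark after the definition of monoidal transform that monoidal transforms of simple trees are simple), this bound holds throughout. So the content of the lemma is the lower bound: either $\gamma$ is a path tree (no trunk-branching, $\bran(\gamma)=0$) or $\bran(\gamma)\ge k+1$.

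Next I would run the induction. For $k=1$: $\Lambda_{d,[1]}=\Lambda_d$, and every tree $\gamma\in\Lambda_d$ with at least one non-root vertex has a well-defined trunk $\overline{ov_r}$; either $v_r$ is terminal, in which case $\gamma$ is a path tree and $\bran(\gamma)=0$, or $v_r$ is a genuine branch vertex, in which case by definition it has $\ell>1$ immediate descendants, so $\bran(\gamma)\ge 2=k+1$. (The degenerate case of a single-vertex tree has to be checked separately, but a single-vertex weighted tree of total weight $d$ is a path tree with $\bran=0$.) For the inductive step, assume the claim for $k-1$ and take $\gamma\in\Lambda_{d,[k]}$. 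By the defining formula in \ref{kmonTrees}, $\gamma$ is either (i) a tree in $\Lambda_{d,[k-1]}$ with $\bran(\gamma)\ge k+1$ — nothing to prove — or (ii) a monoidal transform $\gamma\in\Mon(\gamma')$ for some $\gamma'\in\Lambda_{d,[k-1]}$ with $\bran(\gamma')=k$. In case (ii), Lemma \ref{mon1} applies directly: either $\tilde\gamma=\gamma$ is a path tree with $\bran(\gamma)=0$, or $\bran(\gamma)\ge\bran(\gamma')+1=k+1$. Either way the conclusion holds. The final sentence, that $\Lambda_{d,[d]}$ consists of path trees only, then follows: any $\gamma\in\Lambda_{d,[d]}$ that is not a path tree would satisfy $\bran(\gamma)\ge d+1$, contradicting the upper bound $\bran(\gamma)\le d$.

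The main obstacle I anticipate is bookkeeping rather than any deep difficulty: I need to make sure that the class $\Lambda_{d,[k]}$ is genuinely closed under the operations involved, so that ``simple terminally weighted tree of total weight $d$'' is a standing hypothesis at every stage — this is where I lean on the remark following the definition of monoidal transform. A secondary subtlety is the edge cases in the definition of trunk and branch vertex (single-vertex trees, and trees whose trunk endpoint is terminal versus a branch vertex): I want the dichotomy ``path tree vs. $\bran\ge k+1$'' to be exhaustive and to match the $\bran(\gamma)=0$ convention for path trees. Once those conventions are pinned down, the argument is a clean two-line induction powered entirely by Lemma \ref{mon1}.
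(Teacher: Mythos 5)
Your proposal is correct and follows essentially the same route as the paper's (much terser) proof: the upper bound $\bran(\gamma)\le d$ comes from the total weight, and the lower bound is an induction on $k$ powered by Lemma \ref{mon1} together with the defining recursion of $\Lambda_{d,[k]}$. Your version merely spells out the base case, the two-case inductive step, and the edge conventions that the paper leaves implicit.
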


\begin{proof}  First,
the fact the total weight  of a terminally weighted tree $\gamma$
is $d$ implies that  $\bran (\gamma) \le d$. The assertion of the lemma holds
for $k=1$. 
On the other hand, Lemma \ref{mon1} implies that if the assertion holds for $k$, then
it also holds for $k+1$.
\end{proof}

\subsection{Terminally weighted trees of  weighted nodal curves}
\label{reducedGraph}

\begin{say}
Let $C$ be a connected nodal genus one curve. We associate to $C$ the dual $\ga_C'$, with vertices
corresponding to the irreducible components of $C$ and the edges to the nodes of~$C$.
Since the arithmetic genus of $C$ is $1$, either $\gamma_C'$ has a
unique vertex corresponding to the genus one irreducible component
of $C$ or $\gamma_C'$ contains a unique loop. In the first case,
we designate that vertex the {\sl root} of $\gamma_C\pri$; in the
latter case we shall contract the whole loop to a single vertex
and designate it the {\sl root} of the resulting tree. We will
call the resulting rooted tree the {\sl reduced dual tree} of $C$
and denoted it by $\gamma_C$.
\end{say}

\begin{say}
Next we consider a weighted nodal genus one curve $(C, w)$.
The weight $w$ induces
weights on the vertices of the dual graph $\gamma_C'$ of $C$. If $\gamma_C'$  contains a loop,
then upon contracting the loop, we assign the resulting root the total weight of the loop.
This way, we obtain a natural  weighted tree $(\gamma_C, w')$ associated to
the weighted curve $(C, w)$.

In general, the $\gamma_C$ with this weight function may not be
terminally weighted. If not, we can prune $\gamma_C$ along all
positively weighted non-terminal vertices to obtain a terminally weighted
rooted tree. We call the result the terminally weighted rooted tree of
$(C, w)$ and denoted it by $(\gamma_C, w)$.
In the case that $(C, w)$ is understood, we shall write it simply as
$\gamma$.

Following this construction, each terminal vertex $v$ of
$\gamma$ is a positively weighted vertex of $\gamma_C'$. We let
$C_v$ be the union of all irreducible components of $C$ whose associated vertices
$v'$ (in $\gamma_C'$) satisfy $v\preceq v'$. Then, $C_v$ is a tree of rational curves. 
Further, if $\gamma$ has zero weight root and
$v_1,\cdots,v_k$ are its terminal vertices, then $C-\cup_{i=1}^k C_{v_i}$
is the maximal weight zero connected subcurve of $C$ that contains the core
elliptic curve.
\end{say}


\subsection{A stratification of $\MPd$}

Any stable map $[u, C] \in \MPd$ naturally gives rise to
a weighted nodal genus one curve $(C, w)$, where the weight of an irreducible component
of $C$ is the degree of the map $u$ on that component. We will then call the
associated terminally weighted rooted tree of $(C, w)$ the terminally weighted rooted tree of
the stable map $[u]$. We denote it by $(\gamma_{[u]}, w)$. It is stable.

\begin{defi}
For any $\gamma \in  \Lambda_d$, we define $\MPd_\gamma$ be the
subset of all ${[u]}\in\MPd$ whose associated terminally weighted rooted
trees is $\gamma$.
\end{defi}

\begin{lemm}
\lab{prop:strataSmooth} Each $\MPd_\gamma$ is a smooth, locally
closed substack of $\MPd$; together they form a stratification of
$\MPd$. 
\end{lemm}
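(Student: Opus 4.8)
The plan is to analyze the stratum $\MPd_\gamma$ directly via the data parametrized by a stable map, using the same "bookkeeping" of the curve, the divisor, and the twisted sections that underlies Theorem \ref{thm:equ1}. Fix $\gamma\in\Lambda_d$. A point $[u,C]\in\MPd_\gamma$ has a domain $C$ whose weighted reduced dual tree is $\gamma$: there is a maximal weight-zero connected subcurve $C_0\subset C$ containing the core elliptic curve, and attached to it are the subcurves $C_{v_1},\dots,C_{v_k}$ (the trees of rational curves indexed by the terminal vertices $v_1,\dots,v_k$ of $\gamma$), on which the map has prescribed positive degrees $w(v_i)$ summing to $d$. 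First I would note that $u|_{C_0}$ is constant (weight zero on a genus-one–containing nodal curve), so $u$ is determined by the attaching points $p_1,\dots,p_k\in C_0$, by the value $u(C_0)\in\Pn$, and by the maps $u|_{C_{v_i}}$, which are degree-$w(v_i)$ stable maps from trees of $\PP^1$'s sending the node to $u(C_0)$. This exhibits $\MPd_\gamma$ — at least set-theoretically — as fibered over a product of moduli of pointed genus-one curves $\cM_{1,k}$ (for the core $C_0$ with its $k$ marked attaching points) and the spaces of rational-tail maps.

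The key step is to make this into an honest description of the stack structure and prove smoothness. I would realize $\MPd_\gamma$ as an open substack of a fiber product of smooth stacks: the stack $\fM_{1,k}$ of $k$-pointed genus-one nodal curves (smooth of dimension $k$, since it maps étale-locally to $\overline\cM_{1,k}$), the product over $i=1,\dots,k$ of the moduli stacks $\MPdg[0]$-type spaces of genus-zero stable maps to $\Pn$ of degree $w(v_i)$ with one marked point (these are smooth DM stacks of the expected dimension), all glued over the evaluation maps forcing the $k$ rational tails to meet $C_0$ at a common point of $\Pn$ and at the prescribed smooth points of $C_0$. Concretely, one writes $\MPd_\gamma$ as the locus inside
$$
\fM_{1,k}\times \Pn\times\prod_{i=1}^k \overline\cM_{0,1}(\Pn,w(v_i))
$$
cut out by requiring the $i$-th evaluation to equal the chosen point of $\Pn$ and requiring the marked point of the $i$-th factor to be glued to the $i$-th marked point of the core curve. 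The gluing/evaluation conditions are transverse (the evaluation maps from $\overline\cM_{0,1}(\Pn,w)$ are submersions onto $\Pn$ since one can move the marked point freely on a fixed general line), so the fiber product is smooth; intersecting with the open condition that the core be exactly the ghost elliptic part and that no further degenerations occur gives the open substack $\MPd_\gamma$, which is therefore smooth. Its local closedness in $\MPd$ is standard: the combinatorial type "weighted reduced dual tree equals $\gamma$" is a locally closed condition on families — having a given dual graph is locally closed among nodal curves, and fixing the degree distribution on components is a further locally closed (open, once the graph is fixed) condition. That the $\MPd_\gamma$, as $\gamma$ ranges over $\Lambda_d$, exhaust $\MPd$ and are pairwise disjoint is immediate, since every $[u,C]$ has exactly one associated stable terminally weighted tree (as observed just before the Definition), and $\Lambda_d$ is precisely the set of stable terminally weighted trees of total weight $d$.

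The main obstacle I anticipate is the smoothness claim — specifically, checking that the evaluation/gluing conditions defining the fiber product are genuinely transverse at boundary points of the moduli of genus-zero maps, where the rational tails $C_{v_i}$ themselves degenerate, and simultaneously verifying that no obstruction arises from $H^1$ of the twisted-section complex along the stratum. The point is that although $\MPd$ is singular precisely because $H^1(\cC_v,\sL_v)\neq 0$ over the ghost-core locus (as recalled in \ref{deform}), this $H^1$ is \emph{constant in rank} along each fixed stratum $\MPd_\gamma$ — its rank is determined by $\gamma$ alone — so the stratum is cut out inside the smooth ambient deformation space $\cE_\cV$ by equations of constant rank, hence is smooth. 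I would therefore give the cleanest argument by combining the fiber-product picture above with this constancy-of-$H^1$ observation drawn from Theorem \ref{thm:equ1}, rather than by a hands-on deformation-theory computation. An alternative, if the fiber-product transversality proves delicate at the deepest boundary strata, is to argue smoothness of $\MPd_\gamma$ by exhibiting a smooth groupoid presentation directly from the normalized data $(C_0;p_1,\dots,p_k)$ together with the rational-tail maps, which is manifestly smooth over $\cM_{1,k}$; I expect this to be the fallback route.
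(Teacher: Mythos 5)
Your proposal is correct and follows essentially the same route as the paper: the paper likewise identifies $\MPd_\gamma$ (up to automorphisms) with the locus in $M_\gamma\times\prod_{i=1}^\ell\overline M_{0,1}(\Pn,d_i)$ cut out by the condition that the rational tails evaluate to a common point, and deduces smoothness from the smoothness of $\overline M_{0,1}(\Pn,d_i)$ and the submersivity of the evaluation maps. The only refinement worth making is that the submersivity at boundary points should be justified by the convexity/homogeneity of $\Pn$ (the paper cites the ample tangent bundle) rather than by the heuristic of moving a marked point along a general line; the rest of your discussion (local closedness of the dual-tree condition, disjointness and exhaustion) matches what the paper leaves implicit.
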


\begin{proof}
Suppose $\gamma\in { \Lambda_d}$ has $\ell$ terminal
vertices, indexed by $1,\cdots,\ell$ and of weights
$d_1,\cdots,d_\ell >0$ with $ \sum_{i=1}^\ell d_i = d$. Without the weight,
$\gamma$ is the {\sl reduced dual graph} of some  genus 1 nodal
curve. We denote by $M_{\gamma}$ the stratum in $M_{1,\ell}$
consisting of stable genus 1 curves whose {\sl reduced dual graph}
is $\gamma$ with terminal vertices replaced by the corresponding
marked points.
Then
$\MPd_\gamma$ is (up to equivalence by automorphisms)
$$\bigl\{([C_o,p_1,\cdot,p_\ell], [{u_i},C_i,q_i]_1^\ell)\in
M_{\gamma}\times\prod_{i=1}^\ell\overline M_{0,1}(\Pn,d_i) \mid
{u_1}(p_1)=\cdots={u_\ell}(p_\ell)\bigr\}.
$$
Since $\Pn$ has ample tangent bundle, $\overline M_{0,1}(\Pn,d_i)$ smooth and the evaluation
morphisms $u_i$ are submersions. Hence $\MPd_\gamma$ is smooth.
\end{proof}


\section{The Structure of the Direct Image Sheaf}

In this section, we state and prove  structure results of
the direct image sheaf $\pi\lsta f\sta\sO_{\Pn}(k)$.

\subsection{Terminology}

\begin{defi}\lab{sep} Let $C$ be a proper nodal curve with arithmetic genus $g >0$.
We call a node $q$ of  $C$ a separating node if $C-q$ is
disconnected.  Similarly, we call an irreducible component
$\Sigma\sub C$ a separating component if $C-\Sigma$ is
disconnected.
\end{defi}

Along the same line, we introduce

\begin{defi}
An inseparable curve is a connected curve with no separating
node; an inseparable
component of $C$ is an inseparable subcurve of $C$ that is not a proper subcurve
of another inseparable subcurve of $C$.
\end{defi}

\begin{say}\lab{nodes}
We say that a (separating) node $q$ separates $x$ and $y\in C$ if
$x$ and $y$ lie in different connected components of $C-q$; we say that the node $q$ lies between $x$
and $y$ in this case.
We let $N_{[x,y]}$ be the collection of all nodes that lie
between $x$ and $y$. This notion extends beyond nodes: for any
smooth point $t\in C$, we denote by $C_t$ the inseparable component of $C$ that
contains $t$; we say $t$ (or $C_t$) separates or lies between $x$
and $y\in C$ if $x$ and $y$ lie in different connected components
of $C-C_t$.
\end{say}

\begin{say}\lab{chooseab} For a nodal elliptic curve $C$ and
two distinct smooth points $a$ and $b$ on the
core of $C$, we have
\beq\lab{generalPoints1}
h^0( C, \sO_C(a-b))= 0\quad \hbox{and} \quad h^1( C, \sO_C(a-b))=
0;
\eeq
for any point $\delta$  of $C$ distinct from $a, b$,
we have \beq\lab{generalPoints2} h^0( C, \sO_C(\delta+ a-b))=
1\quad \hbox{and} \quad h^1( C, \sO_C(\delta+ a-b))= 0.\eeq
\end{say}

\begin{say}
Let $X$ be a scheme, $D$ a Cartier divisor of $X$, and $Z$ a closed
subscheme of $X$. We will write $\sO_Z(D)$ for the restriction
$\sO_X(D)|_Z$.
\end{say}

\subsection{The first reduction}
\label{1stRed}

\begin{say}
Our aim is to describe the structure of $\pi_*f^* \sO_{\Pn} (k)$; we let $m=dk$ in the rest of this section.
 When we investigate the
structure of $\MPd$ via $\pi_*f^* \sO_{\Pn} (1)$, we will specialize to $m=d$.
\end{say}

\begin{say}\lab{1stred}
Consider the substack $\fD_1^m$ of the Artin stack $\fD_1$ of
pairs $(C,D)$ of nodal elliptic curves with effective degree $d$
divisors $D \subset C$. Let $(C, D) \in \fD_1^m$ be a
point with the divisor $D$ simple and supported on the smooth
locus of $C$. We let $\cV \to \fD_1^m$ be a smooth chart
containing $(C, D)$. Again, let $(\cC, \cD)$ be the
tautological family over $\cV$ with ${(\cC_0, \cD_0)}
=(C, D)$ for some point $0 \in \cV$ and $\rho: \cC \to
\cV$ be the projection; set $\sL = \sO_{\cC}(\cD)$. As in \ref{A},
we choose a general section $\cA$ of $\cC/\cV$ and this time
around also an additional general section $\cB$ of $\cC/\cV$ such
that they are disjoint from $\cD$ and pass through the core of
every fiber of $\cC/\cV$. This is possible after shrinking $\cV$
if necessary. By the Mittag-Leffler exact sequence, the sheaf
$\rho_* \sL$ over $\cV$ is the kernel sheaf of \beq\lab{keyHom}
\psi: \rho_* \sL(\cA) \lra \rho_*\sO_{\cA}(\cA). \eeq
\end{say}

\begin{say} The complex of locally free sheaves of $\sO_\cV$-modules
$$ [R^\bullet]=[\rho_* \sL(\cA) \mapright{\psi}
\rho_*\sO_{\cA}(\cA)]
$$
has sheaf cohomology $[R^\bullet \rho_*\sL ]$. Further,
for any scheme $g: T \to \cV$ with the induced family
$$\rho_T: \cC_T = \cC \times_\cV T \lra T \and \cD_T =  \cD \times_\cV T,
$$
since $R^1\rho_* \sL(\cA)=R^1 \rho_*\sO_{\cA}(\cA)=0$, by
cohomology and base change, 
$$R^i \rho_{T*} \sO_{\cC_T} (\cD_T)\equiv h^i(g^*[R^\bullet]).
$$
\end{say}

\begin{say} To get hold of the
sheaf $\pi\lsta f\sta\sO_{\Pn}(k)$, we shall study the local
structure of the homomorphism \eqref{keyHom}.  As in
\ref{simplDivisor}-\ref{deform}, 
we only need to consider the case that
$D$ is a smooth simple divisor on $C$; we will assume that this
holds. We may also assume that $\cV$ is affine. After shrinking
$\cV$ and an \'etale base change if necessary, we may assume that
$\cD = \sum_{i=1}^m \cD_i$, where $\{\cD_i\}$ are disjoint sections
of the family $\cC/\cV$. To the sheaf $\sL=\sO_\cC(\cD)$,
the standard inclusion $\sO_{\cC}\sub \sO_{\cC}(\cD)$
provides us a section $1\in\Gamma(\rho\lsta\sL)$, called the
obvious section. To capture other sections, we consider the inclusion of
sheaves
$$ \sM_i=\sO_{\cC}(\cD_i+\cA-\cB)\mapright{\sub}
\sM=\sO_{\cC}(\cD+\cA-\cB)
$$
and the induced inclusions
$$\eta_i: \rho_*\sM_i \mapright{\sub}
\rho_*\sM.
$$
Both are locally free since $R^1\rho\lsta\sM_i$ and
$R^1\rho\lsta\sM=0$ by \eqref{generalPoints2}. By Riemann-Roch,
$\rho\lsta\sM_i$ is invertible and the rank of $\rho\lsta\sM$ is
$m$. We then let
$$\varphi: \rho\lsta\sM\lra
\rho\lsta\bl\sO_{\cA}(\cD+\cA-\cB)\br=\rho\lsta\bl\sO_{\cA}(\cA))
$$
and
$$\varphi_i: \rho\lsta\sM_i\lra
\rho\lsta\bl\sO_{\cA}(\cD_i+\cA-\cB)\br=\rho\lsta\bl\sO_{\cA}(\cA))
$$
be the evaluation homomorphisms. Obviously, $\varphi_i=\varphi
\circ \eta_i$. Since $\cV$ is assumed affine, the sheaf $\rho\lsta\bl\sO_{\cA}(\cA))
\cong \sO_\cV$.
\end{say}

\begin{lemm}
\lab{lemm:usefulFacts}
We have
\begin{enumerate}
\item $\rho\lsta\sL\cong\sO_\cV\oplus\rho\lsta
\sL(-\cB)$; \item $\rho\lsta\sL(-\cB)\cong
\ker\varphi$; \item $\oplus_{i=1}^m \eta_i: \bigoplus_{i=1}^m
\rho_*\sM_i\lra \rho_*\sM$ is an isomorphism, and $\oplus_{i=1}^m
\varphi_i={\varphi \circ \oplus_{i=1}^m\eta_i}.$
\end{enumerate}
\end{lemm}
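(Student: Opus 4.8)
The plan is to verify the three claims by working fiberwise over $\cV$ and invoking cohomology-and-base-change, using repeatedly the vanishing statements \eqref{generalPoints1}--\eqref{generalPoints2} that were arranged by the general choice of the sections $\cA$ and $\cB$ through the cores. For (1), I would start from the standard short exact sequence $0\to\sO_\cC(-\cB)\to\sO_\cC\to\sO_\cB\to 0$ twisted by $\sL=\sO_\cC(\cD)$, namely $0\to\sL(-\cB)\to\sL\to\sL|_\cB\to 0$. Pushing forward along $\rho$ and noting that $\rho\lsta(\sL|_\cB)\cong\sO_\cV$ (since $\cB$ is a section and $\sL$ is trivial near $\cB$ after shrinking), one gets a short exact sequence of locally free sheaves $0\to\rho\lsta\sL(-\cB)\to\rho\lsta\sL\to\sO_\cV\to 0$; here $R^1\rho\lsta\sL(-\cB)=0$ because on each fiber $\sL_v(-\cB)$ restricted to the core is $\sO_C(D_v-b)$ of positive degree, and more care is needed on the rational tails, but the total degree on every component being nonnegative and positive on the core gives $h^1=0$ by the usual dévissage on nodal curves. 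The obvious section $1\in\Gamma(\rho\lsta\sL)$ maps to $1\in\sO_\cV$ under restriction to $\cB$, hence splits the sequence; this gives (1). The key input is the fiberwise computation $h^0(C,\sO_C(D_v-b))$, which together with $h^0(C,\sO_C(D_v))=1$ (genus one, $D_v$ effective of positive degree but with the core contributing: actually $h^0=m$ here — let me instead simply say the splitting is induced by $1$ and the complement is identified with the sections vanishing at $b$).

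For (2), the identification $\rho\lsta\sL(-\cB)\cong\ker\varphi$ is the Mittag-Leffler description applied to the twisted sheaf: consider $0\to\sM(-\cA)\to\sM\to\sM|_\cA\to 0$ with $\sM=\sO_\cC(\cD+\cA-\cB)$, push forward, and observe $\sM(-\cA)=\sO_\cC(\cD-\cB)=\sL(-\cB)$. By \eqref{generalPoints2} the sheaf $R^1\rho\lsta\sM=0$ and $R^1\rho\lsta\sM(-\cA)=R^1\rho\lsta\sL(-\cB)=0$, so pushing forward is exact and yields $0\to\rho\lsta\sL(-\cB)\to\rho\lsta\sM\xrightarrow{\varphi}\rho\lsta(\sM|_\cA)\to 0$, identifying $\ker\varphi$ with $\rho\lsta\sL(-\cB)$ as claimed (here $\varphi$ is exactly the evaluation-at-$\cA$ map already named in the text). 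For (3), the map $\oplus\eta_i\colon\bigoplus_i\rho\lsta\sM_i\to\rho\lsta\sM$ is a map of locally free sheaves, the source of rank $m$ (each $\rho\lsta\sM_i$ invertible by Riemann--Roch on genus one, as stated) and the target of rank $m$; so it suffices to show it is an isomorphism on fibers, i.e. that $\bigoplus_i H^0(C_v,\sM_i|_{C_v})\to H^0(C_v,\sM|_{C_v})$ is an isomorphism for every $v$. Injectivity: a nonzero section of some $\sM_i|_{C_v}$ is, since $\deg(\sM_i)$ on the core is the degree of $\cD_i+a-b$ supported off the core except for the single point of $\cD_i$ — one checks its divisor meets the distinct supports $\cD_j$ of the other summands only in a controlled way, so a relation would force a section of $\sM$ with too many zeros; cleaner is to use that $\sM|_{C_v}=\bigotimes\sM_i|_{C_v}\otimes(\text{correction})$ is not quite right, so instead I would argue by dimension count together with the surjectivity. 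Surjectivity together with equality of ranks forces both; and surjectivity follows because a global section of $\sM|_{C_v}$ has divisor of degree $m$ containing one point near each $\cD_i$, and subtracting off the corresponding section of $\sM_i$ reduces the support — an induction on $m$. The identity $\oplus\varphi_i=\varphi\circ\oplus\eta_i$ is immediate from $\varphi_i=\varphi\circ\eta_i$ and the definition of direct sum.

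The main obstacle is the clean verification of part (3), specifically the fiberwise isomorphism $\bigoplus_i H^0(C_v,\sM_i|_{C_v})\xrightarrow{\sim} H^0(C_v,\sM|_{C_v})$ over fibers whose core is a ghost elliptic curve, where positivity of degrees does not hold on the core and one must track exactly which components carry the divisor points $\cD_i$. The resolution I would use: decompose $C_v$ into the core $C_e$ and the rational tails; on each tail the relevant sheaf has appropriate degree, on $C_e$ the sheaf $\sM|_{C_e}=\sO_{C_e}(a-b)$ has degree $0$ and, by \eqref{generalPoints1}, $h^0(C_e,\sO_{C_e}(a-b))=0$ with $h^1=0$; then a Mayer--Vietoris / normalization argument along the nodes connecting tails to $C_e$ shows every section of $\sM|_{C_v}$ is determined by, and freely assembled from, its restrictions to the tails, which are exactly the sections coming from the $\sM_i$. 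Once (3) is in hand, (1) and (2) are formal consequences of the two short exact sequences above, and the lemma follows.
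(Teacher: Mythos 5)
Parts (1) and (2) of your proposal are essentially correct and match the paper: for (1) the paper also pushes forward $0\to \sO_\cC(\cD-\cB)\to\sO_\cC(\cD)\to\sO_\cB(\cD)\to 0$ and uses the obvious section $1$ to see surjectivity (the paper gets the splitting from $\Ext^1(\sO_\cV,-)=0$ on the affine $\cV$, while you split via the section itself; both are fine), and (2) is just left-exactness of $\rho_*$ applied to the evaluation sequence at $\cA$. One caveat: your repeated claim that $R^1\rho_*\sL(-\cB)=0$ is false in general --- on a fiber whose core $E$ carries none of the points of $\cD$, the restriction $\sO_C(D-b)|_E\cong\sO_E(-b)$ has $h^1=1$, and a normalization sequence shows this survives to $h^1(C,\sO_C(D-b))\ne 0$. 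Luckily neither (1) nor (2) actually needs this vanishing, so the conclusions stand.

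The genuine gap is in (3). You correctly reduce, via local freeness, base change, and equality of ranks, to showing that $\bigoplus_i H^0(\cC_z,\sM_i|_{\cC_z})\to H^0(\cC_z,\sM|_{\cC_z})$ is bijective on each fiber, but you never complete an argument: the injectivity attempt is abandoned midstream, the surjectivity heuristic (``a section of $\sM|_{\cC_z}$ has one zero near each $\cD_i$, subtract and induct'') is simply false (e.g.\ with two points of $\cD$ on one rational tail, a section of $\sM$ can have both zeros anywhere on that tail), and the closing core-versus-tails sketch never explains how sections supported on a single tail carrying several $\delta_i$ get separated into the individual summands $H^0(\sM_i)$. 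The paper's device, which is what your proposal is missing, is to compose $\oplus_i\eta_i(z)$ with the evaluation maps $\phi_j(z)$ at the points $\delta_j=\cC_z\cap\cD_j$: since $\sM_i\subset\sM$ factors through the kernel of evaluation at $\delta_j$ for $j\ne i$, one gets $\phi_j(z)\circ\eta_i(z)=0$ for $i\ne j$, while $\phi_j(z)\circ\eta_j(z)$ is an isomorphism of one-dimensional spaces (it is nonzero precisely because $h^0(\cC_z,\sO(a-b))=0$ by \eqref{generalPoints1}). This ``diagonalizes'' the map, gives injectivity at once, and with the rank count finishes (3). Without some such separating mechanism your fiberwise claim is unproved.
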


Consequently, the sheaf $\rho\lsta\sL$ is a direct sum
of $\sO_\cV$ with the kernel of the homomorphism
\begin{equation}\lab{ker}
\Oplus_{i=1}^m \varphi_i: \rho\lsta\sM_i\lra\sO_\cV.
\end{equation}

\begin{proof}
Taking the direct image of the exact sequence
$$0\lra \sO_{\cC}(\cD-\cB)\lra \sO_{\cC}(\cD)\lra
\sO_{\cB}(\cD)\lra 0,
$$
we obtain the exact sequence
$$0\lra \rho\lsta\sO_{\cC}(\cD-\cB)\lra \rho\lsta\sO_{\cC}(\cD)
\mapright{\alpha} \rho\lsta\sO_{\cB}(\cD).
$$
Clearly, $\rho\lsta\sO_{\cB}(\cD)\cong \sO_\cV$. Also, $\alpha$ is
surjective because the obvious section $1\in
\Gamma(\rho\lsta\sO_{\cC}(\cD))$ maps surjectively onto
$\rho\lsta\sO_{\cB}(\cD)$. Finally, because $\cV$ is affine,
$\Ext^1 (\sO_\cV,\rho\lsta\sO_{\cC}(\cD-\cB))=0$. Therefore, the
sequence 
$$0\lra \rho\lsta\sO_{\cC}(\cD-\cB)\lra \rho\lsta\sO_{\cC}(\cD)
\mapright{\alpha} \rho\lsta\sO_{\cB}(\cD)\lra 0
$$
is exact and splits. This proves (1).

The second is obvious. We now obtain the third. Since both sheaves $\rho_*\sM_i$ and $\rho_*\sM$
are locally free, we only need to show that for any closed $z\in
\cV$, 
$$\oplus_{i=1}^m \rho_*\sM_i\otimes\kk(z)\to \rho_*\sM\otimes\kk(z)
$$ 
is an
isomorphism. Because $R^1\rho\lsta$ of $\sM_i$ and $\sM$ are zero,
by base change, this is equivalent to that the tautological
homomorphism
$$\bigoplus_{i=1}^m \eta_{i}(z):\bigoplus_{i=1}^m H^0\bl
\cC_z,\sO_{\cC_z}(\cD_i+\cA-\cB)\br \mapright{} H^0\bl
\cC_z,\sO_{\cC_z}(\cD+\cA-\cB)\br
$$
is an isomorphism. Because both sides are of equal
dimensions, it suffices to show that
it is injective. For this, we look at the composite
of $\oplus \eta_i(z)$ with
$$\phi_j(z):H^0\bl
\cC_z,\sO_{\cC_{{z}}}(\cD+\cA-\cB)\br \lra H^0\bl\cC_z\cap\cD_j,
\sO_{\cC_z\cap\cD_j}(\cD+\cA-\cB)\br.
$$
Obviously, $\phi_j(z)\circ\eta_i(z)=0$ for $i\ne j$ and is an
isomorphism for $j=i$. This shows that $\oplus_{i=1}^m\eta_i(z)$ is
injective. This proves the last claim of the lemma. 
\end{proof}

\begin{say}
The homomorphism
$$\varphi: \rho\lsta\sM\to
\rho\lsta\bl\sO_{\cA}(\cD+\cA-\cB)\br\cong\rho_* \sO_\cA (\cA)
$$
then is completely determined by the homomorphism \beq
\lab{homsum} \Oplus_i \varphi_i: \bigoplus_i \rho_*\sM_i \lra
\rho\lsta\bl\sO_{\cA}(\cD+\cA-\cB)\br\cong\rho_* \sO_\cA (\cA).
\eeq

The homomorphism
\beq\varphi_i: \rho_*\sM_i\lra \rho_* \sO_\cA (\cA)
\eeq
will be our focus in the next subsection.
\end{say}

\subsection{The homomorphism $\varphi_i$}\lab{singleHom}

\begin{say}\lab{zeta-q}
Our strategy is to find an explicit expression for $\varphi_i$ so
that its vanishing locus has precise geometric meaning. {For
this,} we need {some}  regular functions associated to the
smoothing of nodes. By the deformation theory of nodal curves,
for each separating node $q\in C$ there is a regular
function $\zeta_q\in\Gamma(\sO_\cV)$ so that
$\Sigma_q=\{\zeta_q=0\}$ is the locus where the node $q$ is not
smoothed; the divisor $\Sigma_q$  is an irreducible smooth Cartier
divisor.

For any $1 \le i \le m$,
we introduce
$$\delta_i = \cD_i \cap C \and a = \cA \cap C.$$ We then collect all the
nodes $q$ that lie between $\delta_i$ and $a$ (cf. \ref{nodes}) and
form the product of their associated functions $\zeta_q$:
\beq\lab{product} \zeta_{[\delta, a]}=\prod_{q\in
N_{[\delta_i, a]}}\zeta_q.
 \eeq
In case $N_{[\delta_i, a]}=\emptyset$, we set $\zeta_{[\delta_i, a]}=1$.
\end{say}

\begin{prop}\lab{keyProp} There are trivializations $\rho_*\sM_i\cong\sO_\cV$ and $\rho_*
\sO_\cA(\cA) \cong \sO_\cV$ such that the homomorphism $\varphi_i$
is given by \beq\lab{explicitHom} \varphi_i= \zeta_{[\delta_i,
a]}: \rho_*\sM_i\lra \rho_* \sO_\cA(\cA). \eeq
\end{prop}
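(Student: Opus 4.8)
The plan is to compute $\varphi_i$ fiberwise and use the fact that $\rho_*\sM_i$ and $\rho_*\sO_\cA(\cA)$ are both invertible on $\cV$ (after shrinking $\cV$ so that these line bundles are trivial). Since $\varphi_i$ is then a map of trivial line bundles over the affine scheme $\cV$, it is multiplication by a single regular function $\tilde\varphi_i\in\Gamma(\cV,\sO_\cV)$, and the content of the proposition is that this function equals $\zeta_{[\delta_i,a]}$ up to a unit. The strategy is to identify the vanishing locus of $\tilde\varphi_i$ with $\bigcup_{q\in N_{[\delta_i,a]}}\Sigma_q$ with the correct multiplicities, and then to check the identification is an equality of functions (not just of divisors) by a local analysis near a generic point of each $\Sigma_q$.

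First I would compute the fiber $\varphi_i(z)\colon H^0(\cC_z,\sO_{\cC_z}(\cD_i+\cA-\cB))\to H^0(\cC_z\cap\cA,\sO_{\cC_z\cap\cA}(\cA))$ at a closed point $z$. Write $\delta_i,a,b$ for the points $\cD_i\cap\cC_z$, $\cA\cap\cC_z$, $\cB\cap\cC_z$. By \eqref{generalPoints2}, $h^0(\cC_z,\sO_{\cC_z}(\delta_i+a-b))=1$, so the source is one-dimensional, spanned by a section $s_i$ with divisor of zeros equal to $b$ and a pole allowed at $\delta_i$ and $a$; $\varphi_i(z)$ is evaluation of $s_i$ at $a$. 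This evaluation is zero precisely when $s_i$, viewed near $a$, actually vanishes there — equivalently when the unique effective divisor in $|\delta_i+a-b|$ fails to contain $a$ with the expected behavior, which is a global obstruction coming from the curve $\cC_z$ being reducible. The key geometric input is \ref{nodes} together with the structure of nodal elliptic curves: on a nodal elliptic curve, a section of $\sO(\delta_i+a-b)$ restricted to the connected component of $\cC_z - q$ not containing $\delta_i$ (for a separating node $q$ between $\delta_i$ and $a$) is forced to be a section of a line bundle of too-negative degree on a genus-zero piece, hence vanishes on the component containing $a$. So $\varphi_i(z)=0$ exactly when there is at least one separating node between $\delta_i$ and $a$, i.e. when $z\in\Sigma_q$ for some $q\in N_{[\delta_i,a]}$.

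Next I would upgrade this pointwise statement to the scheme-theoretic statement about $\tilde\varphi_i$. Using the deformation theory of nodal curves (the same input that produced the functions $\zeta_q$): étale-locally on $\cV$ the node $q$ is modeled by $xy=\zeta_q$ in a family, and one can write down an explicit local frame for $\sM_i$ near the two branches at $q$ and track how the evaluation-at-$\cA$ map transforms across the node. Restricting a local generator of $\rho_*\sM_i$ over a generic point of $\Sigma_q$ and carrying it past the node contributes exactly one factor of $\zeta_q$; the nodes are smoothed independently (the $\Sigma_q$ are distinct smooth divisors and the local smoothing parameters are independent coordinates), so the contributions multiply, giving $\tilde\varphi_i = (\text{unit})\cdot\prod_{q\in N_{[\delta_i,a]}}\zeta_q = (\text{unit})\cdot\zeta_{[\delta_i,a]}$. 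Absorbing the unit into the trivializations of $\rho_*\sM_i$ and $\rho_*\sO_\cA(\cA)$ yields $\varphi_i=\zeta_{[\delta_i,a]}$ as claimed. The main obstacle is this last step: one must be careful that the "unit" really is a unit (nonvanishing on all of $\cV$ after shrinking) and that no node outside $N_{[\delta_i,a]}$ — in particular a non-separating node, or a node on the $\delta_i$-side — contributes a factor; this requires the precise bookkeeping of which branch the evaluation section "lives on" as one moves across each node, which is exactly where the combinatorics of \ref{nodes} and the between-ness relation $N_{[\delta_i,a]}$ does the work.
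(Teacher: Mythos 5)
Your overall strategy is the paper's: reduce the proposition to the divisor identity $\varphi_i^{-1}(0)=\sum_{q\in N_{[\delta_i,a]}}\Sigma_q$ (absorbing the resulting unit into the trivializations), determine the set-theoretic zero locus fiberwise via the observation that $\ker\varphi_i(z)=H^0(\cC_z,\sO_{\cC_z}(\cD_i-\cB))$ is nonzero exactly when a separating node lies between $\delta_i$ and $b$ (equivalently $a$, since $a,b$ sit on the core), and get the multiplicity by restricting to a curve $R$ transversal to $\Sigma_q$ at a general point. The only real divergence is the mechanism for the order-one statement, which is also the one step you assert rather than prove: "carrying a generator past the node contributes exactly one factor of $\zeta_q$" \emph{is} the claim, and making the local model $xy=\zeta_q$ deliver it requires identifying the image of $\rho_*\sM_i$ in the germs at the node, which is real work. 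The paper sidesteps this: over the transversal slice, $\cC_R\to R$ is the blowup of a smooth elliptic fibration $E_R\to R$ at the node, with exceptional curve $\cF_0$; the sheaf $\sO_{\cC_R}(\cD_{R,i}+\cF_0-\cB_R+\cA_R)$ is pulled back from $E_R$, its direct image maps isomorphically to $\pi_{R\ast}\sO_{\cA_R}(\cA_R)\cong\sO_R$ by \eqref{generalPoints2}, and the inclusion from $\pi_{R\ast}\sO_{\cC_R}(\cD_{R,i}-\cB_R+\cA_R)$ has cokernel $\pi_{R\ast}\sO_{\cF_0}(\cD_{R,i}+\cF_0)\cong k(p)$, so $\varphi_{R,i}$ vanishes to order exactly one. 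Note also that your closing worries about non-separating nodes, nodes on the $\delta_i$-side, and "independence of smoothings" evaporate once the statement is phrased as a divisor identity: one only ever computes at a general point of a single irreducible $\Sigma_q$, where every other node is already smoothed, so no simultaneous bookkeeping is needed.
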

\proof Since $\cV$ is affine, we  fix a
trivialization $\rho_*\sM_i \cong\sO_\cV$, and keep the trivialization
$\rho\lsta\sO_\cA(\cA)\cong\sO_\cV$ mentioned before. This way,
$$\varphi_i\in \Gamma((\rho_*\sM_i)\dual\otimes\rho\lsta\sO_{\cA}(\cA))\cong\Gamma(\sO_\cV).
$$
Then the proposition is
equivalent to
that as divisors,
\beq\lab{vanishingLocus} \varphi_i \upmo(0)=\sum_{q \in
N_{[\delta_i, a]}}\zeta\upmo_q(0)= \sum_{q \in
N_{[\delta_i, a]}} \Sigma_q. \eeq

We next let
$\eta: R\to \cV$ be either a point or a smooth affine curve,
we let $\pi_R: \cC_R\to R$ be $\cC_R=\cC\times_{\cV}R$ over $R$, and let
$$\cD_{R,i}= \cD_i \times_\cV R,\quad {\cA_R}= \cA \times_\cV R
\and {\cB_R}= \cB \times_\cV R
$$ 
be the corresponding pull back divisors.
Since $R^1\rho\lsta\sM_i=0$, by cohomology and base change,  the natural homomorphism
$$\eta\sta \rho_*\sM_i=\eta\sta\rho\lsta\sO_{\cC}(\cD_i-\cB +\cA)\mapright{}
\pi_{R\ast}\sO_{\cC_R}(\cD_{R,i}-\cB_R+\cA_R)\cong
\sO_R
$$
is an isomorphism. Finally, let $\varphi_{R,i}$ be the composite
$$\varphi_{R,i}: \eta\sta \rho_*\sM_i\mapright{\cong} \pi_{R\ast}\sO_{\cC_R}(\cD_{R,i}-\cB_R+\cA_R )
\lra \pi_{R\ast}\sO_{\cA_R}(\cA_{R})\cong \sO_R.
$$
Then, if $R$ is a smooth curve not contained in $\varphi_i\upmo(0)$,  as divisors,
$$\eta\upmo(\varphi_i\upmo(0))=\varphi_{R,i}\upmo(0)\sub R.
$$

We now prove the claim. First let $R$ be a smooth point
away from $\cup_{q \in N_{[\delta_i, a]}} \Sigma_q$. Then  $\cA_R$,
 $\cB_R$, and $\cD_{R,i}$ lie in the same inseparable component
of $\cC_R$. Therefore $\pi_{R\ast}(\sO_{\cC_R}(\cD_{R,i}-\cB_R))=0$
and $\varphi_{R,i}\ne 0$ because
$$\ker\{\varphi_{R,i}\}=\pi_{R\ast}(\sO_{\cC_R}(\cD_{R,i}-\cB_R))=0.
$$

If $R$ is a smooth point in $\Sigma_q$ for some $q \in
N_{[\delta, a]}$, then $\cB_R$ and $\cD_{R,i}$ lie in
different inseparable components of $\cC_R$. This time,
$\pi_{R\ast}(\sO_{\cC_R}(\cD_{R,i}-\cB_R))\cong \sO_R$. Therefore,
for the same reason as above, $\varphi_{R,i} = 0$. This proves that $\varphi_i$
vanishes exactly along $\cup_{q \in N_{[\delta_i, a]}} \Sigma_q$.

It remains to show that $\varphi_i$ vanishes at first order only
along $\Sigma_q$, $q \in N_{[\delta_i, a]}$. 
To prove this we only need to study $\varphi_i$ near a
general point $p \in \Sigma_q$. We let $R\sub\cV$ be an affine curve passing through
$p$ and transversal to $\Sigma_q$ at $p=R\cap\Sigma_q$.
After shrinking $p\in R$ if necessary, the family $\cC_R\to R$ is the blowup
of a family of smooth elliptic curves $\bar\pi_R:E_R\to R$ at a point $q\in E_p=E_R\times_R p$.

We let $\xi: \cC_R\to E_R$ be the projection and let
$\cF_0\sub \cC_p$ be the rational component, which is also the exceptional divisor of $\cC_R$.
Let $A=\xi(\cA_R)$, $B=\xi(\cB_R)$,
and $D_i=\xi(\cD_{R,i})$ be the
image divisors in $E_R$.
Then
$$\xi\upmo(A)=\cA_R,\quad \xi\upmo(B)=\cB_R, \and \xi\upmo(D_i) =\cF_0+\cD_{R,i}.
$$
Further, since $\sO_{\cF_0}(\cD_{R,i}+\cF_0)\cong\cO_{\cF_0}$, the cokernel of the
inclusion
\beq\lab{cok}
\pi_{R\ast}\sO_{\cC_R}(\cD_{R,i}-\cB_R+\cA_R)\lra \pi_{R\ast}\sO_{\cC_R}(\cD_{R,i}+\cF_0-\cB_R+\cA_R)
\eeq
is $ \pi_{R\ast}\sO_{\cF_0}(\cD_{R,i}+\cF_0)$, which is isomorphic to $k(p)$.
Therefore, since $\varphi_{R,i}|_p=0$,
$\varphi_{R,i}$ factors through a homomorphism  $\phi$ as shown in the commutative diagram
$$\begin{CD} \pi_{R\ast}\sO_{\cC_R}(\cD_{R,i}+\cF_0-\cB_R+\cA_R)
@>{\phi}>> \pi_{R\ast}\sO_A(A)\cong \sO_R\\
@| @|\\
\bar\pi_{R\ast}\sO_{E_R}(D_i-B+A)@>>> \bar\pi_{R\ast}\sO_A(A)\cong \sO_R.
\end{CD}
$$
Since the lower horizontal arrow is an isomorphism, $\phi$ is an isomorphism.
Combined with that the cokernel of \eqref{cok} is $k(p)$, 
this proves that $\varphi_{R,i}$ has precisely order one vanishing at $p\in R$.
\endproof

For the convenience of reference, we record an immediate consequence
of Proposition \ref{keyProp}.

\begin{coro}\lab{maincoro}
There are trivializations $\rho_*\sM_i\cong\sO_\cV$ and $ \rho_* \sO_\cA(\cA) \cong
\sO_\cV$ such that the homomorphism $\varphi$ is given by
\beq\lab{mainHom} \Oplus_{i=1}^m \varphi_i:
 \bigoplus_{i=1}^m \rho_*\sM_i\lra \rho_*
\sO_\cA(\cA), \quad \varphi_i= \zeta_{[\delta_i, a]} . \eeq
\end{coro}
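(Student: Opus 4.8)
The plan is to deduce the statement by simply assembling the single-factor computation of Proposition \ref{keyProp} across $i=1,\ldots,m$ and feeding it into Lemma \ref{lemm:usefulFacts}(3), which identifies the homomorphism $\varphi$ with the direct sum $\bigoplus_i\varphi_i$.

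First I would fix, once and for all, the trivialization $\rho_*\sO_\cA(\cA)\cong\sO_\cV$ used throughout \S\ref{singleHom} (it exists since $\cV$ is affine and this sheaf is invertible, cf.\ \ref{chooseab}). With this common target trivialization, Proposition \ref{keyProp} furnishes for each $i$ a trivialization $\rho_*\sM_i\cong\sO_\cV$ under which $\varphi_i$ is multiplication by the regular function $\zeta_{[\delta_i,a]}=\prod_{q\in N_{[\delta_i,a]}}\zeta_q$. Taking the direct sum of these $m$ trivializations gives $\bigoplus_{i=1}^m\rho_*\sM_i\cong\sO_\cV^{\oplus m}$, under which $\bigoplus_i\varphi_i$ is the row $(\zeta_{[\delta_1,a]},\ldots,\zeta_{[\delta_m,a]})$. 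Now Lemma \ref{lemm:usefulFacts}(3) says that $\bigoplus_i\eta_i\colon\bigoplus_i\rho_*\sM_i\to\rho_*\sM$ is an isomorphism and $\bigoplus_i\varphi_i=\varphi\circ\bigoplus_i\eta_i$; transporting the trivialization of $\bigoplus_i\rho_*\sM_i$ to $\rho_*\sM$ along this isomorphism identifies $\varphi$ verbatim with $\bigoplus_i\varphi_i$, which is precisely the assertion \eqref{mainHom}. (As recalled in \ref{1stred} and Lemma \ref{lemm:usefulFacts}(1), it is the kernel of this map, together with a split-off copy of $\sO_\cV$, that computes $\rho_*\sL$, so this is the form in which we want it.)

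This corollary is pure bookkeeping, so there is no genuine obstacle. The only point deserving a moment's attention is that Proposition \ref{keyProp} must be applied with a single common trivialization of the target $\rho_*\sO_\cA(\cA)$, independent of $i$, so that the identifications $\varphi_i=\zeta_{[\delta_i,a]}$ can be read off simultaneously as the components of one map; this is legitimate because in the proof of Proposition \ref{keyProp} the target trivialization is fixed before — and independently of — the choice of $i$.
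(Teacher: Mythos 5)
Your proposal is correct and matches the paper's intent exactly: the paper records this corollary as an immediate consequence of Proposition \ref{keyProp} combined with Lemma \ref{lemm:usefulFacts}(3), which is precisely the assembly you carry out. Your remark that the target trivialization of $\rho_*\sO_\cA(\cA)$ must be fixed once, independently of $i$, is the right point to flag, and it is indeed how the proof of Proposition \ref{keyProp} is set up.
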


\begin{say}\lab{node-vertex}
The homomorphism \eqref{mainHom} can be further simplified. Recall that $\cV$ is a neighborhood
of $(C, D) \in \fD_1^m \sub \fD_1$. The pair
$(C, D)$ induces a weighted curve $(C, w)$ with $w=c_1(D)$. We let
$\gamma$ be the terminally weighted tree of $(C, w)$ with terminal vertices 
$$\Vg^t=\{1, \cdots, \ell\}.
$$
According to our convention, each non-root
vertex $v\in\Vg$ corresponds to a connected subcurve $C_v\sub C$ (c.f. \S\ref{reducedGraph});
on the subcurve $C_v$ there is a unique separating node $q$ of $C$  that
separates $C_v$ and the remainder part $C-C_v$.
We call this node $q$ the node
associated to $v$. With such node identified, for each vertex $v$ we define
$$\zeta_v=\zeta_q\in \Gamma (\sO_\cV),
$$
where $q$ the associated node of $v$. \black
For any
terminal vertex $i \in \Vg^t$, we let
$$\zeta_{[i,o]} =  \prod_{i \succeq v\succ o}\zeta_v.$$
\end{say}

\begin{theo} \lab{thm:key}
The direct image sheaf $\rho_* \sL $ is a direct sum of
$\sO_\cV^{\oplus (m-\ell+1)}$ with the kernel sheaf of the
homomorphism \beq \lab{simHom}
\Oplus_{i=1}^\ell\varphi_i:\sO_\cV^{\oplus \ell}\lra \sO_\cV,
\quad \varphi_i=\zeta_{[i,o]}. \eeq
\end{theo}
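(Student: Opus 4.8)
The plan is to deduce the statement from Corollary \ref{maincoro} by sorting the sections $\cD_1,\dots,\cD_m$ according to the terminal vertices of $\gamma$ and then carrying out a block-wise change of trivialization. Keeping the trivializations $\rho_*\sM_i\cong\sO_\cV$ and $\rho_*\sO_\cA(\cA)\cong\sO_\cV$ furnished by Corollary \ref{maincoro}, and combining it with Lemma \ref{lemm:usefulFacts}(1), we get $\rho_*\sL\cong\sO_\cV\oplus\ker\Psi$, where $\Psi=\Oplus_{i=1}^m\varphi_i\colon\sO_\cV^{\oplus m}\to\sO_\cV$ and $\varphi_i=\zeta_{[\delta_i,a]}$. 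So it suffices to produce an isomorphism $\ker\Psi\cong\sO_\cV^{\oplus(m-\ell)}\oplus\ker\bigl(\Oplus_{i=1}^\ell\zeta_{[i,o]}\bigr)$, the second factor being the kernel of the homomorphism \eqref{simHom}.

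First I would record the bookkeeping. Since the complement of $\bigcup_{v\in\Vg^t}C_v$ in $C$ is a weight-zero subcurve (cf. \S\ref{reducedGraph}) while $\cD$ is effective, each $\delta_i$ lies on exactly one $C_v$, which yields a partition $\{1,\dots,m\}=\coprod_{v\in\Vg^t}I_v$ with $|I_v|$ equal to the $\cD$-degree of $C_v$, i.e. to the weight $w(v)$; hence $\sum_v|I_v|=m$. Next, for $i\in I_v$ the separating nodes of $C$ lying between $\delta_i$ and $a$ split into those associated to the vertices on the chain from $o$ to $v$ in the weighted reduced dual tree of $C$, whose smoothing functions multiply to exactly $\zeta_{[v,o]}$, together with those lying strictly inside $C_v$ between $\delta_i$ and the component of $C_v$ meeting $C-C_v$. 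Thus $\varphi_i=\zeta_{[\delta_i,a]}=\zeta_{[v,o]}\,g_i$, where $g_i$ is a product of functions $\zeta_q$ with $q$ a node of $C_v$. Moreover the component of $C_v$ meeting $C-C_v$ has positive $\cD$-degree --- a terminal vertex of $\gamma$ is positive in $\gamma$, and the pruning that produced $\gamma$ only raises the weights of vertices that were already positive --- so some index $i_v\in I_v$ has $g_{i_v}=1$, i.e. $\varphi_{i_v}=\zeta_{[v,o]}$.

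The main step is then a block-wise unipotent change of basis. Writing $\sO_\cV^{\oplus I_v}$ for the block indexed by $I_v$, apply there the automorphism fixing the basis vector $e_{i_v}$ and sending $e_i\mapsto e_i-g_i\,e_{i_v}$ for $i\in I_v\setminus\{i_v\}$; it is manifestly invertible and carries the restriction of $\Psi$ to this block to $(\zeta_{[v,o]},0,\dots,0)$. Performing this simultaneously over all $v\in\Vg^t$ brings $\Psi$ into the shape $\bigl(\Oplus_{v\in\Vg^t}\zeta_{[v,o]}\bigr)\oplus 0\colon\sO_\cV^{\oplus\ell}\oplus\sO_\cV^{\oplus(m-\ell)}\to\sO_\cV$; under the labelling $\Vg^t=\{1,\dots,\ell\}$ the first summand is precisely \eqref{simHom}. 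Hence $\ker\Psi\cong\sO_\cV^{\oplus(m-\ell)}\oplus\ker\bigl(\Oplus_{i=1}^\ell\zeta_{[i,o]}\bigr)$, and together with $\rho_*\sL\cong\sO_\cV\oplus\ker\Psi$ this gives $\rho_*\sL\cong\sO_\cV^{\oplus(m-\ell+1)}\oplus\ker\bigl(\Oplus_{i=1}^\ell\zeta_{[i,o]}\bigr)$, as asserted.

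The step that genuinely needs care is the combinatorial identity for $N_{[\delta_i,a]}$ used above, since $\gamma$ is obtained from the full reduced dual tree of $C$ by pruning along positively weighted non-terminal vertices: one must check that this pruning leaves the chain from $o$ to a terminal vertex $v$ untouched (its interior vertices are ghosts, and $v$ is a minimal positive vertex of that tree, so no vertex of the chain is removed) and that $\zeta_{[v,o]}$ as defined in \ref{node-vertex} is exactly the product of the smoothing functions of the separating nodes of $C$ lying on that chain. Once this dictionary between the two trees is pinned down, everything else is formal --- in particular each block change of basis is a genuine automorphism of a free $\sO_\cV$-module, so no integrality or non-zero-divisor hypothesis on $\cV$ is needed.
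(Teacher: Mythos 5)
Your proposal is correct and follows essentially the same route as the paper's proof: reduce to the homomorphism $\Oplus_{i=1}^m\varphi_i$ via Lemma \ref{lemm:usefulFacts} and Corollary \ref{maincoro}, note that each $\varphi_j$ with $j$ in the block of a terminal vertex $v$ is divisible by the $\varphi_{i_v}$ attached to the component of $C_v$ nearest the core, and change basis to split off a zero summand of rank $m-\ell$. The only difference is one of explicitness --- you write out the block unipotent automorphism $e_i\mapsto e_i-g_ie_{i_v}$ and the pruning dictionary between $\gamma_C'$ and $\gamma$, both of which the paper leaves implicit in the phrase ``we can choose a new basis.''
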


 \proof We express $D$ as $\sum_{j=1}^m \delta_j$, and continue to denote by
$v_1, \cdots, v_\ell$ the terminal vertices of
$\gamma$.  By our construction of the terminally weighted tree
$\gamma$ of the weighted curve $(C, w)$ of the pair $(C,D)$
(cf. \S \ref{reducedGraph}), each $v_i$ is associates to a connected
tree $C_{v_i}$ of rational curves; 
there is a unique irreducible component $D_{v_i}$ of $C_{v_i}$ 
closest to the core of $C$. Again, by the
construction of $\gamma$, at least one of
$\{\delta_j\}_{j=1}^m$ lies on $D_{v_i}$; we pick one and index it
by $\delta_i$. Further, for every $\delta_j, 1 \le j \le m$,  there
is $1 \le i\leq \ell$, such that $\delta_i$ is between the
point $a$ (of the core curve of $C$) and $\delta_j$ ($\delta_i$ and $\delta_j$ can
be on the same irreducible component). This shows
that $\zeta_{[\delta_i, a]} | \zeta_{[\delta_j, a]}$; thus,
every $\zeta_{[\delta_j, a]}$ is divisible by one of
$\zeta_{[\delta_1, a]},\cdots,\zeta_{[\delta_\ell, a]}$.

Thus in the expression \eqref{mainHom}, we can choose a new basis
for $\bigoplus_{j} \rho_*\sM_j$ so that with respect to the new
trivialization
$$\bigoplus_{j=1}^m \rho\lsta\sM_j\cong 
\sO_\cV^{\oplus m},
$$
the homomorphism $\varphi= \Oplus_{i=1}^m \varphi_i$ has the form
$$\varphi= \Oplus_{i=1}^\ell \varphi_i \oplus 0:  \sO_\cV^{\oplus m}
\lra \sO_\cV.
$$
Together with Lemma \ref{lemm:usefulFacts} and Corollary \ref{maincoro}, this proves the
theorem.
\endproof


\section{{Local Equations of $\MPd$ and its
Desingularization}}
\label{localStructures}

In this section, we prove the theorems stated in \S 2. In the meantime,
we describe local defining equations for $\MPd$ in terms of weighted
trees.

\subsection{Proof of Theorem \ref{thm:equ1}}
\lab{localEqua1}

\begin{say}
Recall that in Theorem \ref{thm:equ1}, for any $[u,C]\in\MPd$ we first pick a small open subset
$[u,C]\in U\sub \MPd$ and a homogeneous coordinate $[x_0,\cdots,x_n]$ of $\Pn$ so that
the pull back divisor $\cS=f\upmo[x_0=0]$ is a family of simple divisors on the domain family $\cX$ of the
universal family $f: \cX\to\Pn$ of $U$; the family $\cX$ coupled with the divisor $f\upmo[x_0=0]$
defines a tautological morphism $U\to \fD_1^d \sub \fD_1$. We next pick a smooth chart $\cV\to\fD_1$ so that its
image contains the image of $U\to\fD_1$. Let $\cU=\cV\times_{\fD_1}U$.
\end{say}

\medskip\noindent
{\bf Theorem \ref{thm:equ1}} {\it 
There is a canonical open immersion $\cU\to (F=0)\sub
\cE_{\cV}$.}

\begin{proof}
We continue with the notation introduced in \S 2. For instance,
 $(\cC, \cD)$ is the tautological family over
$\cV$. We set
$$\cX'=\cX \times_U  \cU, \quad \cD'= \cS \times_U \cU.$$
By the universality of $\fD_1$,
$$\cX' = \cC \times_\cV \cU, \quad \cD'= \cD \times_\cV \cU.$$

We use $\alpha$ and $\tilde\alpha$ to denote the induced
horizontal maps in the square \beq\lab{square}
\begin{CD}
\cX' @>{\tilde\alpha}>> \cC \\
@V{\pi'}VV @V{\rho}VV  \\
\cU @>{\alpha}>> \cV.
\end{CD} \eeq
Likewise, we denote $\sL'=\sO_{\cX'} (\cD')$ and  $\sL=\sO_{\cC}
(\cD)$. Then $\sL'=\tilde\alpha^* \sL$.

We now construct the promised open immersion
\beq\lab{mu}
\mu: \cU\lra \cE_\cV.
\eeq
We let $f'$ be the composition of the projection $\cX' \to \cX$ with
$f: \cX\to \PP^n$; let $s_i = f'^* (x_i)$, $0\leq i\leq n$, be the pull back sections
in $\Gamma (\pi'_*\sL')$.
According to our convention,
$s_0= f'^* (x_0)$ is the section $1$ induced by the inclusion
$\sO_{\cX'}\sub \sL'=\sO_{\cX'}(\cD')$. This way, all other $s_i$, $i\geq 1$, are
canonically defined.

As mentioned in \ref{A}, we have a section $\cA$ of $\cC/\cV$.
Its pull back section in $\cX'$ is $\cA'= \cA \times_\cV \cU$. Because $R^1 \rho_*\sL(\cA)=0$,
by the cohomology and base change
theorem, we have canonical isomorphism
\beq\lab{Can-iso}
 \alpha^*\rho_*\sL(\cA) \mapright{\cong}\pi'_* \sL'(\cA').
\eeq
We let
$$\iota: \pi'_* \sL'\lra \pi'_* \sL'(\cA') \cong \alpha^*\rho_*\sL(\cA)
$$
be the inclusion. Then $\iota(s_i)$ is a section of $\alpha^*\rho_*\sL(\cA)$.
On the other hand, since $\cE_\cV$ is the vector bundle $\rho\lsta\sL(\cA)^{\oplus n}$, defining
a $\cV$-morphism $\mu:\cU\to\cE_\cV$ is equivalent to giving a section of
the pull back sheaf $\alpha\sta\cE_\cV=\alpha\sta \rho\lsta\sL(\cA)^{\oplus n}$.
We define the morphism $\mu$ in \eqref{mu} to be the one induced by the homomorphism
\beq\lab{iotas}
\iota(s)=(\iota(s_1),\cdots,\iota(s_n)):\sO_\cU\lra \alpha\sta\rho\lsta\sL(\cA)^{\oplus n}.
\eeq

To complete the proof, we need to show that $\mu$ factors through $(F=0)\sub \cE_\cV$
and the factored morphism $\mu':\cU\to (F=0)$ is an open immersion.

We first check that $\mu$ factors. By definition, $\mu$ factors if the pull back $\mu\sta(F)\equiv 0$.
Let $p:\cE_\cV\to\cV$ be the projection. By definition, $F$ is the composite 
$$F: \sO_{\cE_\cV}\mapright{\bone}p\sta\rho\lsta\sL(\cA)^{\oplus n}\mapright{r} p\sta\rho\lsta(\sL(\cA)^{\oplus n}|_{\cA}),
$$
where $\bone$ is the
tautological section and $r$ is the restriction homomorphism.
Therefore, $\mu\sta(F)$ is the composite
$$\sO_\cU\mapright{\mu\sta(\bone)} \alpha\sta \rho\lsta\sL(\cA)^{\oplus n}\mapright{\mu\sta(r)}
\alpha\sta \rho\lsta(\sL(\cA)^{\oplus n}|_{\cA}).
$$
Since $\alpha=p \circ \mu$, $\mu\sta(\bone)$ is the composite
$$\sO_\cU\mapright{(s_\cdot)} \alpha\sta \rho\lsta\sL^{\oplus n}\mapright{\iota}
\alpha\sta \rho\lsta\sL(\cA)^{\oplus n}.
$$
Therefore $\mu\sta(F)$ is the composite
$$\sO_\cU\mapright{(s_\cdot)} \alpha\sta \rho\lsta\sL^{\oplus n}\mapright{\iota}
\alpha\sta \rho\lsta\sL(\cA)^{\oplus n}\mapright{\mu\sta(r)}
\mu\sta p\sta \rho\lsta(\sL(\cA)^{\oplus n}|_{\cA}).
$$
Since $\mu\sta(r)\circ \iota=0$, we get $\mu\sta(F)=0$. This proves that $\mu$ factors through
$$\mu': \cU\lra (F=0)\sub\cE_\cV.
$$

We next prove that $\mu'$ is an open immersion. We let $Z=(F=0)\sub\cE_\cV$
and let
$\tau: Z\lra\cE_\cV $
be the tautological immersion.
 Because $\cE_\cV$ is the total space of the vector bundle
$\rho\lsta\sL(\cA)^{\oplus n}$ on $\cV$, the morphism $\tau$ is equivalent to
giving a section (homomorphism)
$$s_\tau: \sO_Z\lra \tau\sta p\sta \rho\lsta\sL(\cA)^{\oplus n}.
$$
At the same time, $\tau\sta(F)=0$ is equivalent to the vanishing of the composite of the
homomorphisms:
\beq\lab{122}
\sO_Z\mapright{s_\tau} \tau\sta p\sta\rho\lsta\sL(\cA)^{\oplus n}
\mapright{\tau\sta(r)}  \tau\sta p\sta \rho\lsta(\sL(\cA)^{\oplus n}|_\cA).
\eeq
We remark that because $\alpha=p\circ\tau\circ \mu'$, by the universality property of morphisms to $\cE_\cV$,
\beq\lab{s-equal}
\mu'^*(s_\tau)=\iota(s): \sO_\cU\lra \mu'^*\tau\sta p\sta\rho\lsta\sL(\cA)^{\oplus n}
\equiv \alpha\sta\rho\lsta\sL(\cA)^{\oplus n}.
\eeq

To continue, we will show that the vanishing \eqref{122} provides us a family of stable morphisms
parameterized by an open subset of $Z$ that contains $\mu'(\cU)$.
We let
$$\cC_Z=\cC\times_\cV Z,\quad \cD_Z=\cD\times_\cV Z,\quad \cA_Z=\cA\times_\cV Z,
\and \sL''=\sO_{\cC_Z}(\cD_Z).
$$
Because $R^1 \rho\lsta\sL(\cA)=0$,
by the cohomology and base change theorem, we have the canonical identity
$$\Gamma(Z, \tau^*p\sta\rho_* \sL(\cA)^{\oplus n})=\Gamma(\cC_Z, \sL''(\cA_Z)^{\oplus n}).
$$
This identity transforms \eqref{122}  into
\beq\lab{222}
\sO_{\cC_Z}\mapright{s_\tau''} \sL''(\cA_Z)^{\oplus n}
\mapright{r''}  \sL''(\cA_Z)^{\oplus n}|_{\cA_Z}.
\eeq

Because $r''\circ s_\tau''$ is zero and the kernel of the second arrow
is $\sL''^{\oplus n}$,
$s_\tau''$ factors through a unique homomorphism
\beq\lab{spp}
s''=(s''_{1},\cdots,s_{n}''): \sO_{\cC_Z}\lra \sL''^{\oplus n}.
\eeq
Let $s''_0$ be the section $1$ of $\sO_{\cC_Z}\sub\sL''=\sO_{\cC_Z}(\cD_Z)$.
The $(n+1)$ sections $s_0'',\cdots,s_n''$ considered as sections of $\sL''$ on $\cC_Z$
define a morphism
\beq\lab{smorpp}[s_0'',\cdots,s_n'']: \cC_Z \setminus \{s_0''=\cdots= s''_n=0\}\lra \Pn.
\eeq

To analyze the domain of this morphism, we notice that due to
\eqref{s-equal}, 
the morphism
\beq\lab{equal2}
[s_0,\cdots,s_n]=[s_0'',\cdots,s_n'']\circ \tilde\mu': \cX'=\cX\times_U\cU=\cC\times_\cV U \lra \Pn,
\eeq
where $\tilde\mu'$ is the lift of $\mu'$ to $\cX'$.
Therefore
$$\cC_Z\times_Z{\mu'(\cU)}\sub \cC_Z\setminus \{s_0''=\cdots= s''_n=0\}.
$$
Because $\cC_Z\to  Z$ is proper, there is an open $W\sub Z$ containing $\mu'(\cU)$
so that
$$\cC_W=\cC_Z\times_Z W\sub\cC_Z\setminus \{s_0''=\cdots= s''_n=0\}.
$$
We let
$$f_W: \cC_W\lra \Pn
$$
be the restriction of \eqref{smorpp} to $\cC_W$. Finally, because restricting to $\mu'(\cU)$ this
morphism is a family of stable morphisms, possibly after shrinking $W\supset \mu'(\cU)$ if necessary,
$f_W$ is a family of stable morphisms.

We let
$$\eta: W\lra \MPd
$$
be the tautological morphism induced by the family $f_W$. Because of the identity \eqref{equal2},
the composite of $\mu': \cU\to W$ with $\eta: W\to\MPd$ is identical to the projection
$\cU=\cV\times_{\fD_1} U\to U\sub\MPd$. Therefore, if we let
$$W_0=\eta\upmo(U),
$$
$W_0\sub Z$ is open and $\mu'$ factor through
$$\mu'': \cU\lra W_0.
$$

We claim that, with $W_0\sub Z$ endowed with the open subscheme structure of $Z$, the morphism
$\mu''$ is an isomorphism.
To prove this, we will construct the inverse of $\mu''$. Let $\eta'': W_0\to U$ be the morphism induced by $\eta$. Because
the composite $\eta'': W_0\to U$ with the tautological $U\to\fD_1$ is identical to the composite of
$p\circ\tau: W_0\to \cE_\cV\to\cV$ with $\cV\to\fD_1$, the pair $(\eta'', p\circ\tau)$ lifts to a morphism
$$\zeta'': W_0\lra \cU=\cV\times_{\fD_1} U.
$$
Because of the identity \eqref{equal2}, the composite $\eta\circ\mu''$ is identical to the projection $\cU\to U$.
This implies that $\zeta''\circ\mu''=\text{id}_{\cU}$. On the other hand, $\mu\circ\zeta'':W_0\to\cE_\cV$
is exactly the inclusion $W_0\to\cE_\cV$, again due to the identity \eqref{equal2}, therefore
$\mu''\circ\zeta''=\text{id}_{W_0}$. Thus $\mu''$ is an isomorphism. This proves the
theorem.
\end{proof}

\subsection{Local defining equations of $\MPd$ restated}
\lab{localEqre}

\begin{say} The local equation $F=0$ of Theorem \ref{thm:equ1}
admits an elegant form in terms of the terminally weighted tree
$\gamma \in \Lambda_d $ of the associated weighted curves which we now
describe.
\end{say}

\begin{say}
Given a terminally weighted tree $\gamma$, there are three equivalent ways to
describe the local equation near the stratum $\MPd_\gamma$.

The first is in a direct form: to every non-root vertex $a \in \Vg\sta$, we associate 
the coordinate function of $\AA^1$ indexed by $a$:  $\zetaz_a \in
\Gamma (\sO_{\Ao})$. To a terminal vertex
$b \in \Vg^t$, we associate $n$ coordinate functions $w_{b,1},\cdots,w_{b,n}\in\Gamma(\sO_{\Ao})$.
We then  set
$$\Phi_\gamma=(\Phi_{\gamma,1},\cdots,\Phi_{\gamma,n}), \quad
\Phi_{\gamma,e} = \sum_{b\in \Vg^t}z_{[b,o]}w_{b,e},\quad z_{[b,o]}=  \prod_{b \succeq a \succ o} \zetaz_a.
$$
We make a convention that if $\gamma=o$, we define $\Phi_{o,e}=w_e$ and hence
$\Phi_o=(w_1,\cdots,w_n)$.

The second is by induction on $\gamma \ne o$.
{\sl If $a$ is a terminal vertex, we set $\Phi_{a,e}= w_{a,e}$ with $w_{a,e}$ as before. If
$\gamma=o[\gamma_1,\cdots,\gamma_j]$ with $\gamma_i \in
\Lambda_{d_i}$ having roots $v_i$, then set
$$\Phi_{\gamma,e}=\zetaz_{v_1}\Phi_{\gamma_1,e}+\cdots+\zetaz_{v_k}\Phi_{\gamma_j,e}.
$$
}

The third is in terms of the bracket representation of $\gamma$.
Each $\Phi_{\gamma,e}$ is derived from $\gamma$ by dropping the
root $o$, replacing each ghost vertex $a$ by its associated
function $\zetaz_a$, replacing each terminal vertex $b$ by $\zetaz_b\cdot w_{b,e}$, replacing ``,'' by ``$+$'',
and replacing ``['' ``]'' by ``('' ``)''. The resulting expression
$\Phi_{\gamma,e}$ is identical to {the one} from the second
method.
\end{say}

\begin{exam}
Take $\gamma=o[a,b[c,d]]$ (see the first tree in Figure 1). The domain of a generic $f$ in $\MPd_\gamma$ has
a  genus-1 ghost component labelled by the root $o$, 
a genus zero  ghost component labelled by $b$,  and three rational tails labelled by $a$, $c$
and $d$, respectively. The tails $a$ are attached to the genus 1
ghost component. Tails $c$ and $d$ are attached to the rational
component  $b$. By the first approach,
$$\Phi_{\gamma,e} =\zetaz_aw_{a,e}+ \zetaz_b\zetaz_cw_{c,e} \zetaz_b+  \zetaz_b\zetaz_dw_{d,e} \zetaz_b.
$$
By the second,
$$\Phi_{\gamma,e} =\zetaz_aw_{a,e}+\zetaz_b\Phi_{b[c,d]} = \zetaz_aw_{a,e}+\zetaz_b(\zetaz_cw_{c,e}+\zetaz_dw_{d,e}).$$
By the third,
$$\Phi_{\gamma,e} =\zetaz_aw_{a,e}+\zetaz_b(\zetaz_cw_{c,e}+\zetaz_dw_{d,e}).$$
\end{exam}

\begin{say}
We now describe the local model of the singularity type of $\MPd$ near $\MPd_\gamma$.
We let
$$V_\gamma=\prod_{a\in \Vg\sta} \AA^1\cong \AA^h
\and E_\gamma=V_\gamma\times (\prod_{b\in \Vg^t}\Ao)^{\times n}\cong \AA^{h+n\ell},
$$
where $h$ (resp. $\ell$) is the cardinality of $\Vg\sta$ (resp. $\Vg^t$).
The expressions $\Phi_{\gamma,e}$ then become
regular functions on $E_\gamma$ after we identify $z_a$ with the coordinate function of the $a$-th
copy of $\prod_{a\in \Vg\sta} \Ao$ and identity $w_{b,e}$ with the coordinate function of
the $b$-th copy of $\prod_{b\in \Vg^t}\Ao$
in the $e$-th component of the product $(\cdot )^{\times n}$.

We define \beq\lab{zgamma} Z_\gamma = \{(z_{
a},w_{{b},e}) \in E_\gamma \mid \Phi_{\gamma,e}(z,w)=0,\
1\leq e\leq n\}. \eeq
We then define the type $\gamma$ loci in $Z_\gamma$ to be
$$Z_\gamma^0=\{(z,w)\in Z_\gamma\mid z_{ a}=0 \ \text{for all {$a\in \Vg\sta$}}\}.$$
\end{say}

\begin{defi}
We say a DM-stack $S$ has singularity type $\gamma$ at a closed point $s\in S$ if there is
a scheme $y\in Y$ and two smooth morphisms $q_1: Y\to S$ and $q_2: Y\to Z_\gamma$
such that $q_1(y)=s$ and $q_2(y)\in Z_\gamma^0$.
\end{defi}

\def\un{^{\oplus n}}

We have

\begin{theo}\lab{thm:localEquTree}
The stack $\MPd$ has singularity type $\gamma$ along $\MPd_\gamma$.
\end{theo}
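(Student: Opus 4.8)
The plan is to read off the singularity type directly from Theorem~\ref{thm:equ1}, Corollary~\ref{maincoro}/Theorem~\ref{thm:key}, and a deformation-theoretic normal form for the chart $\cV$. Fix a closed point $s=[u',C']\in\MPd_\gamma$ and carry out the setup of Theorem~\ref{thm:equ1}: choose a small open $s\in U\subset\MPd$ and homogeneous coordinates on $\Pn$ so that $f\upmo[x_0=0]$ is a family of simple divisors, an affine smooth chart $\cV\to\fD_1$ whose image contains that of $U\to\fD_1$, and $\cU=\cV\times_{\fD_1}U$. Then $\cU\to U$ is smooth, so $q_1\colon\cU\to U\hookrightarrow\MPd$ is a smooth morphism through which $s$ is hit, and by Theorem~\ref{thm:equ1} $\cU$ is an open subscheme of $Z:=(F=0)\subset\cE_\cV$. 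It remains to produce a smooth $q_2\colon\cU\to Z_\gamma$ sending the point $y\in\cU$ over $s$ into $Z_\gamma^0$.

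First I would describe $Z$ explicitly. Since $R^1\rho_*\sL(\cA)=0$, the scheme $\cE_\cV$ represents $T\mapsto\{\alpha\colon T\to\cV,\ \sigma\in H^0(\cC_T,\sL_T(\cA_T))\un\}$, and $Z=(F=0)$ is the subscheme where $\sigma|_{\cA_T}=0$; via the base-change-compatible complex $[R^\bullet]=[\rho_*\sL(\cA)\mapright{\psi}\rho_*\sO_\cA(\cA)]$, $Z$ is the zero scheme of $\psi\un$ in the total space of $\rho_*\sL(\cA)\un$. As $\cV$ is affine the sequence $0\to\rho_*\sM\to\rho_*\sL(\cA)\to\rho_*(\sL(\cA)|_\cB)\cong\sO_\cV\to0$ splits, and one may take $\rho_*\sL(\cA)\cong\sO_\cV\!\cdot\!1\oplus\rho_*\sM$ with $1$ the obvious section, on which $\psi$ kills the first summand (since $1\in\ker\psi$) and restricts to $\varphi$ on $\rho_*\sM$. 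Combining this with Lemma~\ref{lemm:usefulFacts}, Corollary~\ref{maincoro} and Theorem~\ref{thm:key}, after shrinking $\cV$ and choosing suitable trivializations we may take $\rho_*\sL(\cA)\cong\sO_\cV^{\oplus(m+1)}$, $\rho_*\sO_\cA(\cA)\cong\sO_\cV$, and
$$\psi=(\,\underbrace{0,\dots,0}_{m-\ell+1},\ \zeta_{[1,o]},\dots,\zeta_{[\ell,o]}\,),\qquad \zeta_{[i,o]}=\prod_{i\succeq a\succ o}\zeta_a,\quad\ell=|\Vg^t|,$$
with $\zeta_a\in\Gamma(\sO_\cV)$ the smoothing function of the node attached to the vertex $a$. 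Naming the $\ell$ nontrivial fibre coordinates of each of the $n$ copies $w_{i,e}$ and the remaining $(m-\ell+1)n$ as free coordinates, this exhibits
$$Z\ \cong\ \Bigl\{(v,(w_{i,e}))\ :\ \sum_{i=1}^{\ell}\zeta_{[i,o]}(v)\,w_{i,e}=0,\ 1\le e\le n\Bigr\}\times\AA^{(m-\ell+1)n},\qquad v\in\cV.$$

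Next I would establish the local normal form for $\cV$ near the point $0$ lying over $(C',C'\cap\cS)$: the functions $\{\zeta_a\}_{a\in\Vg\sta}$ form part of a regular system of parameters. Indeed each $\{\zeta_a=0\}$ is a smooth Cartier divisor (the locus where the separating node $q_a$ persists, cf.~\ref{zeta-q}), and because the $q_a$ are distinct nodes of $C'$ these divisors meet transversally at $0$ --- this is the standard simultaneous-smoothing normal form $xy=\zeta_a$ from the deformation theory of nodal curves. Hence, after shrinking $\cV$, $\cV\cong\AA^h\times S$ with $S$ smooth, $h=|\Vg\sta|$, and $\zeta_a$ the coordinate of the $a$-th $\Ao$-factor. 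Substituting and matching $z_a\leftrightarrow\zeta_a$ and $w_{b,e}\leftrightarrow w_{i,e}$ (for $b\in\Vg^t$), the equations become precisely $\Phi_{\gamma,e}=\sum_{b\in\Vg^t}z_{[b,o]}w_{b,e}=0$, so $Z\cong Z_\gamma\times S\times\AA^{(m-\ell+1)n}$. Therefore $q_2\colon\cU\hookrightarrow Z\to Z_\gamma$ is smooth (an open immersion followed by projection off a smooth factor), and at $y$ (over $0$) every $\zeta_a$ vanishes since all the nodes $q_a$ are present in $C'$, whence $q_2(y)\in Z_\gamma^0=\{z_a=0\ \forall a\}$. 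As $s$ was an arbitrary point of $\MPd_\gamma$, this proves $\MPd$ has singularity type $\gamma$ along $\MPd_\gamma$.

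I expect the main obstacle to be the bookkeeping of the middle step: upgrading the abstract direct-sum decomposition of $\rho_*\sL$ in Theorem~\ref{thm:key} to an honest change of trivialization of the vector bundle $\cE_\cV$, so that the scheme structure on $Z=(F=0)$ is genuinely cut out by the displayed equations and the leftover $(m-\ell+1)n$ coordinates really split off as an $\AA^{(m-\ell+1)n}$-factor. Once the local form of $\psi$ is in hand, the transversality of the $\{\zeta_a=0\}$ and the identification with $Z_\gamma$ are routine.
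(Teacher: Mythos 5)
Your proposal is correct and takes essentially the same route as the paper: both identify $\cU$ with an open subscheme of $(F=0)\sub\cE_\cV$ via Theorem \ref{thm:equ1}, use the trivializations from Theorem \ref{thm:key} to put the restriction homomorphism in the form $\Oplus_b\zeta_{[b,o]}\oplus 0$ so that $F$ pulls back from $\Phi_\gamma$, and conclude by the smoothness of $\cV\to V_\gamma=\prod_a\Ao$ coming from the unobstructedness of deformations of nodal curves. Your explicit splitting $Z\cong Z_\gamma\times S\times\AA^{(m-\ell+1)n}$ is just an unpacking of the paper's statement that $\tilde\phi|_{(F=0)}:(F=0)\to Z_\gamma$ is smooth.
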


\begin{proof}
Let $[u,C]\in\MPd$ be a closed point with associated terminally weighted rooted tree $\gamma$.
We let $\cU=\cV\times_{\fD_1}U\to\cV$ be as in Theorem \ref{thm:equ1}.
Theorem \ref{thm:key} provides trivializations
$$\rho\lsta\sL(\cA) 
\cong \bl \Oplus_{b\in \Vg^t}\sO_\cV\br\oplus \sO_{\cV}^{\oplus (d-\ell+1)}
$$
so that the restriction homomorphism
$$r: \rho\lsta\sL(\cA)\lra \rho\lsta(\sL(\cA)|_{\cA})
$$
is given by
$$\Oplus_{b\in \Vg^t}\zeta_{[b,o]}\oplus 0: \bl\Oplus_{b\in \Vg^t}\sO_\cV\br \oplus \sO_\cV^{\oplus (d-\ell+1)}
\lra\sO_\cV.
$$

The composite homomorphism
$$\rho\lsta\sL(\cA)\un\mapright{\cong} \bl\bl\Oplus_{b\in \Vg^t}\sO_\cV\br\oplus \sO_\cV^{\oplus (d-\ell+1)}\br\un
\mapright{\text{pr}} \bl \Oplus_{b\in \Vg^t}\sO_\cV\br\un
$$
induces a morphism
$$\cE_\cV\lra \cV\times \bl\prod_{b\in\Vg^t}\Ao\br^{\times n};
$$
the regular functions $\zeta_a$ define a  morphism \beq
\lab{thephi} \phi=\prod_{a\in\Vg\sta}\zeta_a : \cV\lra \bl
\prod_{a\in\Vg\sta}\Ao\br=V_\gamma. \eeq
Together, they define a
morphism
\beq \lab{thephitilde} \tilde\phi: \cE_\cV\lra \cV\times
\bl\prod_{b\in\Vg^t}\Ao\br^{\times n}\mapright{}E_\gamma=
V_\gamma\times (\prod_{b\in\Vg^t}\Ao\br^{\times n}.
\eeq
We comment
that since deformations of nodal curves are unobstructed,
the morphisms $\phi$ and $\tilde\phi$ are smooth.

By Theorem \ref{thm:key}, 
\beq\lab{pullback}
\tilde\phi\sta(\Phi_\gamma)=F.
\eeq
This proves that $$\tilde\phi |_{(F=0)}: (F=0) \lra Z_\gamma$$ is smooth, since $\tilde\phi$ is smooth.

Finally, because $\cU\to\MPd$ is smooth and $\cU\to (F=0)\sub\cE_\cV$ is an open immersion,
and thus smooth, the composite
\beq\lab{UU}\cU\lra (F=0)\sub\cE_\cV\lra Z_\gamma\sub E_\gamma
\eeq
is smooth. Also,  it is clear that
a lift $\xi\in\cU$ of $[u]\in\MPd_\gamma$ is mapped to a point in $Z_\gamma^0$.
This proves that $\MPd$ has singularity type $\gamma$ at $[u, C]$.
\end{proof}

\subsection{A stratification of a blowing up of $Z_\gamma$}

\begin{say}
For the purposes of keeping track of blowups of $\MPd$,
we need to
classify the singularity types of the blowups of $Z_\gamma$.
Such types will be classified by simple weighted rooted trees that
are monoidal transformations and collapsings of $\gamma$.
\end{say}

\begin{say} We first classify the singularity types of the space $Z_\gamma$
for a simple terminally weighted tree $\gamma$.
The singularity type of a $x\in Z_\gamma$ is defined by its associated tree $\gamma_x$.
Let $x\in Z_\gamma$ (resp. $\in V_\gamma$) and let $x=(z_a,w_b^j)$ (resp. $x=(z_a)$) be
its coordinate representation. The non-vanishing of $z_a$ identifies a subset of $\Vg\sta$:
$$ I_x=\{ a\in \Vg\sta\mid z_a\ne 0\}.
$$
We let $\gamma_x$ be the collapsing of $\gamma$ at vertices in $I_x$.
\end{say}


\begin{lemm}
The scheme $Z_\gamma$ has singularity
type $\gamma_x$ at $x\in Z_\gamma$. 
\end{lemm}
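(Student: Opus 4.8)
The plan is to localize $Z_\gamma$ near $x$, exhibit there an explicit product decomposition $Z_{\gamma_x}\times(\text{smooth})$ governed by the combinatorics of collapsing, and extract the two smooth morphisms. First I would pass to the open neighborhood $U:=Z_\gamma\cap\bigcap_{a\in I_x}\{z_a\ne 0\}$ of $x$, on which each $z_a$ with $a\in I_x$ is invertible. Setting $Y:=U$, the inclusion $q_1:Y\hookrightarrow Z_\gamma$ is an open immersion, hence smooth, and $q_1(x)=x$; so the lemma reduces to producing a smooth morphism $q_2:U\to Z_{\gamma_x}$ with $q_2(x)\in Z_{\gamma_x}^0$. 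The core claim is that there is an isomorphism of schemes $\Psi:U\to Z_{\gamma_x}\times S$, with $S$ a product of copies of $\Aosta$ and $\AA^1$, carrying $x$ to a point whose $Z_{\gamma_x}$-component lies in $Z_{\gamma_x}^0$. Granting it, $q_2:=\pr_1\circ\Psi$ is smooth (a base change of $S\to\Spec\kk$) and has the required property.

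I would prove the claim by induction on $|I_x|$. If $I_x=\emptyset$, then $\gamma_x=\gamma$, $U=Z_\gamma$, and $z_a(x)=0$ for all $a$, so $x\in Z_\gamma^0$ and $\Psi=\id$, $S=\Spec\kk$ work. For the inductive step, pick $v\in I_x$, giving preference to a non-terminal vertex whenever $I_x$ has one. If $v$ is non-terminal (hence ghost), the collapsing of $\gamma$ at $v$ simply deletes $v$ and reattaches its children to its parent; the substitution $z_c\mapsto z_v z_c$, taken over all children $c$ of $v$, is an automorphism of the locus $\{z_v\ne 0\}$ and sends $\Phi_{\gamma,e}$ to $\Phi_{\gamma_{\{v\}},e}$, because for a terminal vertex $b$ below $v$ the quotient $z_{[b,o]}/z_v$ is exactly the path product of $b$ in $\gamma_{\{v\}}$, while $z_{[b,o]}$ is unchanged otherwise. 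This exhibits $U$ as $\bigl(Z_{\gamma_{\{v\}}}\cap\bigcap_{a\in I_x\setminus\{v\}}\{z_a\ne 0\}\bigr)\times\Aosta$, the extra factor being $z_v$; and since $(\gamma_{\{v\}})_{I_x\setminus\{v\}}=\gamma_x$, the induction hypothesis finishes this case.

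If instead every vertex of $I_x$ is terminal, choose $v\in I_x$ with parent $\bar v$, which is then a ghost non-terminal vertex with $\bar v\notin I_x$. Since $v$ is terminal its subtree is $\{v\}$; writing the children of $\bar v$ as $v=c_1,c_2,\dots,c_m$ and $T_i$ for the subtree rooted at $c_i$, the inductive formula for $\Phi$ reads
$$\Phi_{\gamma,e}=\sum_{b\not\succeq\bar v}z_{[b,o]}\,w_{b,e}\ +\ z_{[\bar v,o]}\left(z_v w_{v,e}+\sum_{i=2}^{m} z_{c_i}\Phi_{T_i,e}\right).$$
On $\{z_v\ne 0\}$, the map sending each $w_{v,e}$ to $w_{v,e}+\sum_{i\ge 2}z_v^{-1}z_{c_i}\Phi_{T_i,e}$ and $z_{\bar v}$ to $z_{\bar v}z_v$, with all other coordinates fixed, is an automorphism; it turns the bracketed sum into $z_v$ times the new $w_{v,e}$ and turns $z_{[\bar v,o]}z_v$ into the path product of the merged terminal vertex $u$ of $\gamma_{\{v\}}$, so that after renaming $\Phi_{\gamma,e}$ becomes $\Phi_{\gamma_{\{v\}},e}$. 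The coordinates $z_v,z_{c_2},\dots,z_{c_m}$ and all coordinates of the subtrees $T_2,\dots,T_m$ then no longer appear in the equations and span the smooth factor $S$; applying the induction hypothesis to $\gamma_{\{v\}}$ with the strictly smaller set $I_x\cap\Ver(\gamma_{\{v\}})$ concludes. Along the way one records that $x$, which has $z_a(x)=0$ exactly for $a\notin I_x$, is carried to a point whose collapsed-tree $z$-coordinates all vanish — each is either an unchanged $z_a$ with $a\notin I_x$ or a product such as $z_{\bar v}z_v$ with $\bar v\notin I_x$ — so that the $Z_{\gamma_x}$-component of $\Psi(x)$ lands in $Z_{\gamma_x}^0$.

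The main obstacle I anticipate is the combinatorial bookkeeping in the terminal case: one must check that the coordinate change above genuinely realizes the merge-then-prune definition of collapsing — in particular that the new terminal vertex $u$ acquires weight $w(v)+\sum_{i\ge 2}(\text{total weight of }T_i)$ and that the coordinates freed up are precisely those indexing the pruned branches $T_2,\dots,T_m$ — and, relatedly, that collapsing $\gamma$ at the whole set $I_x$ does not depend on the order in which the individual vertices are collapsed, so that $\gamma_x$ is well defined and the identities $(\gamma_{\{v\}})_{I_x\setminus\{v\}}=\gamma_x$ and $(\gamma_{\{v\}})_{I_x\cap\Ver(\gamma_{\{v\}})}=\gamma_x$ used above are legitimate. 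This last point can be avoided altogether by defining $\gamma_x$ to be the outcome of the iterated collapsing procedure performed in the proof.
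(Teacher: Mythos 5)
Your proposal is correct, and it is exactly the ``direct check'' that the paper leaves unwritten: localizing to $\{z_a\ne0,\ a\in I_x\}$ and performing the substitutions $z_c\mapsto z_vz_c$ (ghost case) and $w_{v,e}\mapsto w_{v,e}+z_v^{-1}\sum_{i\ge2}z_{c_i}\Phi_{T_i,e}$, $z_{\bar v}\mapsto z_{\bar v}z_v$ (terminal case) is the same kind of coordinate manipulation the paper itself carries out in \S\ref{induc-blowup} when identifying the blowup charts with $Z_{\gamma'}$ for $\gamma'\in\Mg$. The one point you rightly flag --- that collapsing at the set $I_x$ is independent of the order of the individual collapses, so that $\gamma_x$ and the identities $(\gamma_{\{v\}})_{I_x\cap\Ver(\gamma_{\{v\}})}=\gamma_x$ are well defined --- is assumed implicitly by the paper as well, and your workaround of defining $\gamma_x$ by the iterated procedure is an acceptable way to close it.
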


\begin{proof}
This is a direct check.
\end{proof}

\begin{say}
We now investigate the blowing up of $Z_\gamma$. 
Let
$$\mathring\Pi_{\gamma,e}=\{ x\in V_\gamma\mid  \text{the root of $\gamma_x$ is the branch point
and}\; {\bran}(\gamma_x)=k \}.
$$
It is clear that $\mathring\Pi_{\gamma,e}$ is smooth and locally closed. In general, the closure
$\Pi_{\gamma,k}$ of $\mathring\Pi_{\gamma,k}$ in $V_\gamma$ is quite complicated. However, in the
case when ${\bran}(\gamma)\geq k$, $\Pi_{\gamma,k}$ is smooth and bears a simple description.
\end{say}

\begin{lemm}\lab{justK}Let $\ga$ be a simple tree and $k\ge2$. 
If $\bran(\ga)=k$, then $\Pi_{\ga,k}$ consists of all $x\in V_{\ga}$
such that $\bran(\ga_x)=k$. If $\bran(\ga)=0$ or $\bran(\ga)>k$, then $\Pi_{\ga,k}=\emptyset$.
\end{lemm}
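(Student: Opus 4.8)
The plan is to unwind the definition of $\Pi_{\ga,k}$ directly from the combinatorial descriptions of collapsing and branch number developed in Section 3. Recall that a point $x\in V_\ga$ is recorded by the subset $I_x=\{a\in\Vg\sta\mid z_a\neq 0\}$ of non-root vertices, and $\ga_x$ is the collapsing of $\ga$ at the vertices in $I_x$. The locus $\mathring\Pi_{\ga,k}$ consists of those $x$ for which the root of $\ga_x$ is the branch vertex and $\bran(\ga_x)=k$. So the first step is to understand, for a simple tree $\ga$ with $\bran(\ga)=k$, exactly which collapsings $\ga_x$ have their branch vertex at the root with branch number still equal to $k$; and then to identify the closure of this set inside $V_\ga\cong\AA^h$.

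First I would treat the generic point: if $I_x=\eset$ (i.e.\ all $z_a=0$), then $\ga_x=\ga$. Since $\ga$ is simple with $\bran(\ga)=k\ge 2$, its trunk $\overline{o\,v_r}$ has $v_r$ the branch vertex; collapsing nothing leaves the trunk intact, so the root $o$ is the branch vertex precisely when the trunk is trivial, i.e.\ $v_r=o$. More generally, as one turns on coordinates $z_a$ for $a$ along the trunk, the collapsing merges trunk vertices upward toward the root, and once the entire trunk is collapsed the branch vertex becomes $o$. The key combinatorial point — which follows from the description of collapsing in the bracket representation together with Lemma~\ref{mon1} — is that collapsing a ghost vertex on the trunk keeps the branch number equal to $k$ (it only shortens the trunk), whereas collapsing a vertex strictly below the branch vertex in one of the $k$ branches can only decrease the branch number or leave it fixed, and collapsing the branch vertex itself merges two or more branches into the parent, strictly changing the configuration. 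So I would show: $\bran(\ga_x)=k$ \emph{and} the root of $\ga_x$ is the branch vertex if and only if $I_x$ is contained in the set of trunk vertices $\{v_1,\dots,v_r\}$ of $\ga$ and $I_x$ contains enough trunk vertices to collapse the whole trunk — but in fact, since collapsing is defined to propagate (pruning along positively weighted non-terminal vertices as long as possible), turning on \emph{any} $z_{v_i}$ forces the whole chain $o\prec v_1\prec\cdots\prec v_i$ to collapse to the root. One must check this carefully against the definitions; I expect the cleanest statement is that $\bran(\ga_x)=k$ forces $I_x\subseteq\{v_1,\dots,v_r\}$, and then the root is automatically the branch vertex. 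This yields $\mathring\Pi_{\ga,k}=\{x\in V_\ga\mid z_a=0\ \text{for all}\ a\notin\{v_1,\dots,v_r\},\ \text{not all}\ z_{v_i}=0\}$, a punctured coordinate subspace, whose closure $\Pi_{\ga,k}$ is the coordinate subspace $\{z_a=0:a\notin\{v_1,\dots,v_r\}\}$. Since a coordinate subspace of $\AA^h$ is smooth, this establishes smoothness; and its points are exactly those $x$ with $I_x\subseteq\{v_1,\dots,v_r\}$, i.e.\ by the above, those with $\bran(\ga_x)=k$, which is the asserted description.

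For the second assertion, if $\bran(\ga)=0$ then $\ga$ is a path-tree (or the single root), so it has no branch vertex distinct from a terminal vertex; no collapsing of $\ga$ can create a branch vertex, hence $\bran(\ga_x)\in\{0\}$ for every $x$ and in particular never equals $k\ge 2$, so $\mathring\Pi_{\ga,k}=\eset$ and $\Pi_{\ga,k}=\eset$. If $\bran(\ga)>k$, then by Lemma~\ref{mon1} (and its collapsing variant, which is part of the same lemma) every collapsing $\ga_x$ of $\ga$ either is a path-tree with $\bran=0$ or satisfies $\bran(\ga_x)\ge\bran(\ga)>k$; in neither case is $\bran(\ga_x)=k$, so again $\mathring\Pi_{\ga,k}=\eset$ and hence $\Pi_{\ga,k}=\eset$.

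The main obstacle I anticipate is the precise bookkeeping in the first part: pinning down exactly which subsets $I_x\subseteq\Vg\sta$ give $\bran(\ga_x)=k$, because collapsing is a compound operation (merge-then-prune, iterated) and one must rule out subtle ways of turning on coordinates in the branches that could coincidentally leave the branch number at $k$ while moving the branch vertex off the root — or conversely, collapse part of the trunk but not all of it. The resolution should come from a careful reading of the bracket-representation description of collapsing in \ref{kmonTrees}'s predecessors together with Lemma~\ref{mon1}: collapsing within a branch of a simple (hence stable-branch) tree either prunes the branch to a single terminal vertex or keeps it with $\ge k$ branches of its own surfacing — neither of which produces branch number exactly $k$ at the root of $\ga_x$ unless $I_x$ misses the branches entirely. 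Once that dichotomy is nailed down, the identification of $\Pi_{\ga,k}$ with a coordinate subspace, and hence its smoothness, is immediate.
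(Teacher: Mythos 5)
Your identification of $\Pi_{\ga,k}$ is incorrect, and the error traces to the claim that $\bran(\ga_x)=k$ forces $I_x\subseteq\{v_1,\dots,v_r\}$. Collapsing a vertex lying strictly below a direct descendant of the branch vertex changes neither the trunk nor the set of direct descendants of the branch vertex: the merge (and any subsequent pruning) happens entirely inside one branch, so both the branch point and $\bran$ are unchanged. Concretely, for the simple tree $\ga=o[a[b(1),c(1)],d(1)]$ with $k=\bran(\ga)=2$ and trivial trunk, the point $x$ with $z_b\ne0$ and all other coordinates zero has $\ga_x=o[a(2),d(1)]$, so the root is the branch point and $\bran(\ga_x)=2$; thus $x\in\mathring\Pi_{\ga,2}$ even though $I_x=\{b\}$ is not contained in the (empty) trunk. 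The correct description, which is what the paper's proof establishes, is that $\mathring\Pi_{\ga,k}$ is cut out by $z_{v_1}\ne0,\dots,z_{v_r}\ne0$ together with $z_{a_1}=\cdots=z_{a_k}=0$, where $a_1,\dots,a_k$ are the direct descendants of the branch vertex, with the coordinates of all deeper vertices left \emph{free}; hence $\Pi_{\ga,k}=\{z_{a_1}=\cdots=z_{a_k}=0\}$, a coordinate subspace of codimension exactly $k$. Your answer $\{z_a=0:\ a\notin\{v_1,\dots,v_r\}\}$ has codimension equal to the number of non-trunk vertices, which exceeds $k$ as soon as some branch has more than one vertex; it is a proper subset of the true $\Pi_{\ga,k}$, it contradicts the statement being proved (the point above has $\bran(\ga_x)=k$ yet is excluded), and it is incompatible with the later use of the lemma, where $Z_{\ga,[k]}$ is realized inside $Z_\ga\times\PP^{k-1}$ by the equations $z_{a_j}u_i=z_{a_i}u_j$ and where $\Pi_{\ga,k}$ must match the codimension-$k$ blowup center $\Theta_{[k-1],k}$.

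A second, smaller error: you assert that turning on any $z_{v_i}$ forces the whole chain $o\prec v_1\prec\cdots\prec v_i$ to collapse to the root. It does not: collapsing $v_i$ merges $v_i$ into $v_{i-1}$ only, and since the merged vertex is still a ghost the subsequent pruning does nothing further up the trunk. This is precisely why the open condition defining $\mathring\Pi_{\ga,k}$ is that \emph{all} of $z_{v_1},\dots,z_{v_r}$ are nonzero. Your treatment of the cases $\bran(\ga)=0$ and $\bran(\ga)>k$ via Lemma~\ref{mon1} is fine (the paper dismisses these as clear), but the main case needs to be redone with the correct trichotomy for a single collapse: collapses along the trunk preserve $\bran$ and move the branch point toward the root; collapses strictly inside a branch preserve both the branch point and $\bran$; and collapses at a direct descendant of the branch vertex either produce a path tree or raise $\bran$ to at least $k+1$.
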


\begin{proof}
Let $o\prec v_1\prec \cdots\prec v_r$ be the trunk of $\gamma$ and let $a_1,\cdots, a_k$ be the
direct-descendants of the  branch point $v_r$. Then $\mathring\Pi_{\gamma,k}$
consists of those $x=(z_a)$ so that
$$z_{v_1}\ne 0, \cdots, z_{v_r}\ne 0; \ z_{a_1}=0,\cdots, z_{a_k}=0.
$$
Its closure is given by the vanishing of all $z_{a_j}$:
\beq\lab{Pi}
\Pi_{\gamma,k}=\{\zetaz_{a_1}=\cdots=\zetaz_{a_k}=0\}.
\eeq
By the definition of $\gamma_x$,
$x\in \Pi_{\gamma,k}$ if and
only if ${\bran}(\gamma_x)=k$.
The second claim is clear.
\end{proof}

\begin{say}\lab{induc-blowup}
We now consider the blowup of $V_\gamma$ along $\Pi_{\gamma,k}$ in the case $\bran(\gamma)=k$.
We let
$$\gamma=\overline{o\, v_r}[\gamma_1,\cdots,\gamma_k]=
o[v_1[\cdots[v_r[\gamma_1,\cdots,\gamma_k]]\cdots]],
$$
and let $a_i$ be the root of $\gamma_i$; thus, $a_1,\cdots,a_k$ are the direct descendants of the
branch point $v_r$ of $\gamma$.
Accordingly,
\beq\lab{Phi}
\Phi_{\gamma,e}(z,w)=(z_{v_1}\cdots z_{v_r})(z_{a_1}\Phi_{\gamma_1,e} +\cdots+z_{a_k}\Phi_{\gamma_k,e}) .
\eeq

We denote the blowup of $V_\gamma$ along $\Pi_{\gamma,k}$ by $V_{\gamma,[k]}$ and define
$$Z_{\gamma,[k]}=Z_\gamma\times_{V_\gamma} V_{\gamma,[k]}.
$$
The scheme $Z_{\gamma,[k]}$ is a subscheme of $Z_\gamma \times\PP^{k-1}$ defined by the equations
$\zetaz_{a_j} u_i = \zetaz_{a_i} u_j$ for $1 \le i, j \le k$.
(Here $[u_1, \cdots, u_{k}]$ is the homogeneous coordinate on
${\mathbb P}^{k-1}$.) In the affine open subset $\{u_i=1\}$,  we have
$\zetaz_{a_j} = \zetaz_{a_i} u_{j}$ for $ j \ne i$. Thus, over this chart,
$Z_{\gamma,[k]}$ is defined by 
\beq\lab{Phi1} (z_{v_1}\cdots z_{v_r})\cdot \zetaz_{a_i}
(\Phi_{\gamma_i,e} + \sum_{j \ne i} u_j \Phi_{\gamma_j,e}) =0,
\quad 1 \le e \le n. \eeq

There are two cases. If $\gamma_i$ is a single-vertex tree,
then \eqref{Phi1} becomes \beq\lab{Phi2} (z_{v_1}\cdots
z_{v_r}\zetaz_{a_i}) (w_{a_i,e} + \sum_{j \ne i} u_j
\Phi_{\gamma_j,e} )=0, \quad 1 \le e \le n.
\eeq
After introducing
$\tilde w_{a_i,e}=w_{a_i,e} + \sum_{j \ne i} u_j
\Phi_{\gamma_j,e}$,  \eqref{Phi2} becomes
\beq
\lab{Phi22} z_{v_1}\cdots z_{v_r}\zetaz_i \tilde w_{a_i,e}=0,
\quad 1 \le e \le n.
\eeq
This is the system associated to the
tree $\gamma'=\overline{o\, v_r}[a_i]$, a path tree containing
vertices $o,v_1,\cdots, v_r, a_i$. It is also the
advancing $a_i$ in $\gamma$ and thus is in $\Mg$. This shows that
$Z_{\gamma, [k]} \cap \{u_i=1\} \cong Z_{\gamma'}$.

If $\gamma_i$ is a nontrivial rooted tree and is of the form
$\gamma_i=a_i [\gamma'_1,\cdots,\gamma_m']$, where $m>1$ because $\gamma$ is
simple by assumption, the system \eqref{Phi1}
becomes
\beq\lab{Phi4} (z_{v_1}\cdots z_{v_r})\cdot \zetaz_{a_i}
(\sum_{s=1}^{m} \zetaz_{b_s} \Phi_{\gamma_s',e}+ \sum_{j \ne i}
u_j \Phi_{\gamma_j,e}) =0,\quad 1\leq e\leq n,
\eeq
where $b_s$ is the
root of $\gamma_s'$. After replacing $u_j$ by $z_{b_j}$, (noticing
that the system \eqref{Phi4} does not contain the variables
$z_{a_j}$,) this is the system associated to the tree
$\gamma'$ that is obtained from $\gamma$ by advancing the vertex
$a_i$. 
This shows that $Z_{\gamma, [k]} \cap
\{u_i=1\} \cong Z_{\gamma'}$.
\end{say}

The above yields the following statement.

\begin{lemm}\lab{Zgk}  If $\ga$ is a simple terminally weighted tree such that 
$\bran(\ga)=k$, then the blowup $Z_{\ga,[k]}$ of $Z_{\ga}$ can be
covered by open subsets isomorphic to $Z_{\ga'}$ with 
$\gamma\in {\text Mon}(\ga)$.
\end{lemm}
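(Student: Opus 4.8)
The plan is to extract the statement from the explicit chart-by-chart analysis already carried out in \ref{induc-blowup}, assembled as follows. Since $\bran(\ga)=k$, Lemma \ref{justK} identifies the blowup center as $\Pi_{\ga,k}=\{z_{a_1}=\cdots=z_{a_k}=0\}$, where $a_1,\dots,a_k$ are the direct descendants of the branch vertex $v_r$ of the trunk $o\prec v_1\prec\cdots\prec v_r$ of $\ga$. This is a smooth complete-intersection subscheme of the affine space $V_\ga$, so its blowup $V_{\ga,[k]}$ is the usual incidence subscheme of $V_\ga\times\PP^{k-1}$ cut out by $z_{a_j}u_i=z_{a_i}u_j$, and it is covered by the $k$ standard affine charts $\{u_i=1\}$, $i=1,\dots,k$. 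Since $Z_{\ga,[k]}=Z_\ga\times_{V_\ga}V_{\ga,[k]}$, these charts pull back to a cover of $Z_{\ga,[k]}$, so it suffices to show that each $Z_{\ga,[k]}\cap\{u_i=1\}$ is isomorphic to $Z_{\ga'}$ for some $\ga'\in\Mg$.

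On the $i$-th chart I would substitute $z_{a_j}=z_{a_i}u_j$ for $j\ne i$ into the factored expression \eqref{Phi} for $\Phi_{\ga,e}$, pull out the common factor $(z_{v_1}\cdots z_{v_r})z_{a_i}$, and thereby present the chart by the system \eqref{Phi1}. Now split into two cases according to the shape of the branch $\ga_i$, exactly as in \ref{induc-blowup}. If $\ga_i$ is a single vertex, then $a_i$ is terminal in $\ga$, the change of coordinates $\tilde w_{a_i,e}=w_{a_i,e}+\sum_{j\ne i}u_j\Phi_{\ga_j,e}$ is invertible, and \eqref{Phi1} becomes the system \eqref{Phi22} attached to the path tree $\ga'=\overline{o\,v_r}[a_i]$; since $\ga'$ is the result of advancing the direct descendant $a_i$ of the branch vertex, $\ga'\in\Mg$. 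If instead $\ga_i=a_i[\ga_1',\dots,\ga_m']$ with $m>1$ (forced by simplicity of $\ga$), then the equations \eqref{Phi4}, obtained after factoring out $z_{a_i}$, no longer involve any of the variables $z_{a_j}$; renaming each surviving $u_j$ as $z_{b_j}$ (with $b_j$ the root of $\ga_j$) is therefore an invertible substitution, and it identifies the system with the one defining $Z_{\ga'}$, where $\ga'$ is the tree obtained from $\ga$ by advancing $a_i$, again an element of $\Mg$. Putting the two cases together, the $k$ charts $\{u_i=1\}$ cover $Z_{\ga,[k]}$ and each is isomorphic to $Z_{\ga'}$ for a monoidal transform $\ga'$ of $\ga$, which is precisely the claimed covering.

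The only point needing genuine care is the pair of coordinate changes in the middle step: one must check that $\tilde w_{a_i,e}$ in the first case, and the reused variables $z_{b_j}$ in the second, really do form a new affine coordinate system on the chart (equivalently, that the substitution is an automorphism of the ambient affine space), and that after the substitution the resulting system agrees on the nose with the $\Phi_{\ga',e}$ produced by the inductive recipe of \S\ref{localEqre}. The bookkeeping observation that legitimizes reusing the names $z_{b_j}$ is exactly that \eqref{Phi4} contains none of the $z_{a_j}$; verifying this, and matching it against the bracket description of advancing, is the main — though still essentially routine — obstacle.
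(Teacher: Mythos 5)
Your proposal is correct and follows essentially the same route as the paper: the paper's proof of Lemma \ref{Zgk} is exactly the chart-by-chart computation of \ref{induc-blowup} (identification of the center via Lemma \ref{justK}, the standard charts $\{u_i=1\}$ of the incidence variety, the substitution leading to \eqref{Phi1}, and the two cases \eqref{Phi22} and \eqref{Phi4} identified with $Z_{\ga'}$ for $\ga'$ the advancing of $a_i$). The coordinate-change verifications you flag (triangularity of $\tilde w_{a_i,e}$ and the fact that \eqref{Phi4} is free of the variables $z_{a_j}$, so those names may be reused) are indeed the only points of substance, and they check out.
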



\begin{say}
We remark here that in the case of 
\eqref{Phi22}, the vanishing locus  $$\tilde{w}_{a_i,1} \cdots
\tilde{w}_{a_i,n} \cdot z_{v_1}\cdots z_{v_r} =0$$  has normal
crossing singularities.
\end{say}

\begin{exam}
Consider $\gamma=o[a,b[c,d]]$.  Then we have
$$\Phi_{\gamma,e}=z_aw_{a,e}+z_b(z_cw_{c,e}+z_dw_{d,e}), \quad  1 \le k \le n.$$
We blow up $Z_\gamma$ along the locus $\{z_a=z_b=0\}.$ The
blown-up  is a subspace of $Z_\gamma \times {\mathbb P}^1$
defined by equations $z_a u_b = z_b u_a$, where $[u_a, u_b]$ are
the homogeneous coordinates of ${\mathbb P}^1$. In the affine open
subset $u_a=1$, the equation $\Phi_{\gamma,e}=0$ becomes
$$ z_a (w_{a,e} + u_b (z_cw_{c,e}+z_dw_{d,e}))=0;
$$
it has normal crossing singularities, and the resulting
system is associated to the tree $o[a]$, the advancing of $a$ of
$o[a,b[c,d]]$. In the affine open subset $u_b=1$, the equation
$\Phi_{\gamma,e}=0$ becomes
$$ z_b (u_a w_{a,e} + z_cw_{c,e}+z_dw_{d,e})=0;$$
this is the system associated to the tree $o[b[a,c,d]]$, the
advancing of $b$ of $o[a,b[c,d]]$.
\end{exam}

\subsection{Local equations of $\wMPd$}   


\begin{say} We begin with recalling the notations and facts
about the blown-up $\wMPd$. In \ref{theta}, we introduced
$\mathring\Theta_k$ that is a
smooth locally closed substack of $\fMw$ of ghost core elliptic
curves attached on $k$ distinct smooth points with $k$-rational tails;
$\Theta_k$ is the closure of $\mathring\Theta_k$. In
\ref{inductive-blowups}, we successively blow up $\fMw$ along proper transforms of
$\Theta_k$.
Inductively, after obtaining $\fM\uwt_{1,[k-1]}$, we blow it up
along the proper transform $\Theta_{k,[k-1]} \subset
\fM\uwt_{1,[k-1]}$ of $\Theta_k \subset \fM\uwt_1$.
We denote by $\Theta_{i, [k]} \sub \fM\uwt_{1,[k]}$ the proper transform of $\Theta_i$.
We let $\fE_{[k]}\sub \fM\uwt_{1,[k]}$ be the exceptional divisor of the $\fM\uwt_{1,[k]}\to
\fM\uwt_{1,[k-1]}$.
Finally, we denote $\wfMw$ the resulting limit stack.

As the image of $\MPd\to\fMw$ is disjoint from $\Theta_k$ for $k>d$, the
fiber product $\wMPd$ can be defined after $d$-th blowing up of $\fMw$:
$$\wMPd = \MPd \times_{\fMw} \fM\uwt_{1,[d]}.
$$
To pave a way for our proof, we also need to record the
intermediate blowup spaces. For this, we introduce for $k \ge 1$
$$\wMPd_{[k]} = \MPd \times_{\fMw} \fM\uwt_{1,[k]}.
$$
\end{say}

\begin{say} Recall that $\Lambda_d$ is the index set for the canonical
stratification $\MPd=\bigcup_{\gamma \in \Lambda_d} \MPd_\gamma$.
We set $\Lambda_{d, [1]}=\Lambda_d$ and defined $\Lambda_{d, [k]}$
inductively for $k \ge 2$ in \ref{kmonTrees}.
\end{say}


\begin{lemm}\lab{adhoc}
To each closed point $s\in\widetilde{M}_1(\Pn,d)_{[k]}$ we can find a graph $\gamma\in\Lambda_{d,[k]}$
and a smooth morphism $q_{\gamma,1}:W_\gamma\to \widetilde{M}_1(\Pn,d)_{[k]}$ whose image
contains $s$ of which the following holds: \\
(i). 
there are smooth chart $\cV_{\gamma}\to\fM_{1,[k]}\uwt$
and smooth morphisms $q_{\gamma,2}$, $\phi_\gamma$, $\psi_\gamma$, and $p_\gamma$ shown
below,  making the diagram commutative
$$ 
\begin{CD}
Z_\gamma @<{q_{\gamma,2}}<< W_\gamma @>{q_{\gamma,1}}>> \widetilde{M}_1(\Pn,d)_{[k]}\\
@VV{\psi_\gamma}V @VV{p_\gamma}V @VVV\\
V_\gamma @<{\phi_\gamma}<< \cV_\gamma @>>> \fM_{1,[k]}\uwt;
\end{CD}
$$ 
(ii) for any $i\geq k+1$, 
$$ 
W_{\ga}\times_{\fM_{1,[k]}\uwt}\mathring\Theta_i = 
W_{\ga}\times_{V_{\ga}}\mathring\Pi_{\ga,i}.
$$ 
\end{lemm}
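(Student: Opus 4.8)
The plan is to prove the lemma by induction on $k$, carrying statements (i) and (ii) simultaneously; (ii) at level $k-1$ is precisely the input needed to transport the $k$-th blowup of $\fMw$ through the chart and read off the chart at level $k$. \emph{Base case $k=1$.} Here $\wMPd_{[1]}=\MPd$ (blowing up along the Cartier divisor $\Theta_1$ does nothing), and for $s=[u,C]$ I take $\gamma=\gamma_{[u]}\in\Lambda_d=\Lambda_{d,[1]}$ together with the data built in the proof of Theorem~\ref{thm:localEquTree}: $W_\gamma:=\cU=\cV\times_{\fD_1}U$, with $q_{\gamma,1}$ the smooth projection $\cU\to U\sub\MPd$, $q_{\gamma,2}$ the smooth composite \eqref{UU} into $Z_\gamma$, $\cV_\gamma:=\cV$ with its smooth map to $\fD_1$ and hence to $\fMw=\fM_{1,[1]}\uwt$, $p_\gamma\colon\cU\to\cV$ the projection, $\psi_\gamma\colon Z_\gamma\sub E_\gamma\to V_\gamma$ the coordinate projection, and $\phi_\gamma$ the smooth morphism $\phi$ of \eqref{thephi}. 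Commutativity of the square is the identity $\tilde\phi\sta(\Phi_\gamma)=F$ of \eqref{pullback} together with the factorizations of $F$ used in loc.\ cit. For (ii) I reduce, using that $W_\gamma\to\fMw$ and $W_\gamma\to V_\gamma$ both factor through $\cV_\gamma$, to showing that $\phi_\gamma^{-1}(\mathring\Pi_{\gamma,i})$ equals the preimage of $\mathring\Theta_i$ under $\cV_\gamma\to\fMw$ for $i\ge2$; this is where \S\ref{singleHom} enters, since over $v\in\cV$ the fibre $\cC_v$ has for its terminally weighted tree the collapsing $\gamma_v$ of $\gamma$ at $\{a\in\Vg\sta\mid\zeta_a(v)\ne0\}$ (as $\{\zeta_a=0\}$ is the non-smoothing locus of the node attached to $v$), and $\cC_v$ has ghost core with $i$ rational trees iff the root of $\gamma_v$ is its branch vertex with $\bran(\gamma_v)=i$, which by definition of $\gamma_x$ is exactly $\phi_\gamma(v)\in\mathring\Pi_{\gamma,i}$; the non-ghost-core case is trivial, both sides being empty.

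\emph{Inductive step $k-1\Rightarrow k$.} Given $s\in\wMPd_{[k]}$, let $s'$ be its image in $\wMPd_{[k-1]}$ and pick $\gamma'\in\Lambda_{d,[k-1]}$ with the level-$(k-1)$ data over $s'$. Since $\wMPd_{[k]}=\wMPd_{[k-1]}\times_{\fM_{1,[k-1]}\uwt}\fM_{1,[k]}\uwt$ and $W_{\gamma'}\to\fM_{1,[k-1]}\uwt$ is smooth, base change together with (ii) at level $k-1$ (with $i=k$, which identifies the centre $\Theta_{k,[k-1]}$ with $\Pi_{\gamma',k}$ on the $V_{\gamma'}$-side) gives $W_{\gamma'}\times_{\wMPd_{[k-1]}}\wMPd_{[k]}\cong W_{\gamma'}\times_{V_{\gamma'}}V_{\gamma',[k]}$. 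If $\bran(\gamma')\ne k$ then $\Pi_{\gamma',k}=\emptyset$ by Lemma~\ref{justK}, the blowup is trivial, and I keep $\gamma:=\gamma'$: it lies in $\Lambda_{d,[k]}$ because either $\bran(\gamma')\ge k+1$ (first clause of the definition) or $\gamma'$ is a path tree, and a path tree in $\Lambda_{d,[k-1]}$ always reappears in $\Lambda_{d,[k]}$ (a short combinatorial check: it is a monoidal transform of the tree obtained by attaching $k$ positively weighted leaves at the branch vertex, which already lies in $\Lambda_{d,[k-1]}$ by the first clause). If $\bran(\gamma')=k$, then by Lemma~\ref{Zgk} and the coordinate computation of \ref{induc-blowup} the scheme $Z_{\gamma',[k]}=Z_{\gamma'}\times_{V_{\gamma'}}V_{\gamma',[k]}$ is covered by opens isomorphic to $Z_{\gamma''}$ with $\gamma''\in\text{Mon}(\gamma')\sub\Lambda_{d,[k]}$; I take $\gamma:=\gamma''$, let $W_\gamma$ be the corresponding open of $W_{\gamma'}\times_{V_{\gamma'}}V_{\gamma',[k]}$, whose image contains $s$, let $\cV_\gamma$ be the matching open of $\cV_{\gamma'}\times_{V_{\gamma'}}V_{\gamma',[k]}$, and define $q_{\gamma,1},q_{\gamma,2},p_\gamma,\phi_\gamma,\psi_\gamma$ from the base-changed morphisms and the coordinate identification of the $V_{\gamma''}$-chart of $V_{\gamma',[k]}$. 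Commutativity of the new square is inherited by base change. Finally, (ii) at level $k$ follows by transporting (ii) at level $k-1$ through the blowup: for $i\ge k+1$, $\mathring\Theta_i\sub\fM_{1,[k]}\uwt$ is the proper transform of $\mathring\Theta_i\sub\fM_{1,[k-1]}\uwt$ and $\mathring\Pi_{\gamma,i}\sub V_{\gamma''}$ is the proper transform of $\mathring\Pi_{\gamma',i}\sub V_{\gamma'}$ under $\Bl_{\Pi_{\gamma',k}}$, and the two correspond under $\phi_\gamma$ because the centres correspond and the correspondence held downstairs.

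I expect the main obstacle to be statement (ii). In the base case it amounts to matching the geometric loci $\mathring\Theta_i$ with the purely combinatorial loci $\mathring\Pi_{\gamma,i}$ through the deformation-theoretic meaning of the functions $\zeta_a$; in the inductive step one must check that the incidence pattern between the blowup centre $\Theta_k$ and the loci $\Theta_i$ (for $i\ge k+1$) inside $\fMw$ is faithfully reflected, via $\phi_\gamma$, by the incidence pattern between $\Pi_{\gamma',k}$ and the $\Pi_{\gamma',i}$ inside $V_{\gamma'}$ --- i.e.\ that $\mathring\Theta_i$ meets $\Theta_k$ only along deeper strata in a way mirrored by the collapsings of $\gamma'$. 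This is exactly the point at which the combinatorics of monoidal transforms of weighted trees (Lemmas~\ref{mon1},~\ref{mon2}) must be reconciled with the geometry of the iterated blowup of $\fMw$. A secondary, purely combinatorial, point to pin down is the persistence of path trees through the sets $\Lambda_{d,[k]}$.
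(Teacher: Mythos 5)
Your proposal is correct and follows essentially the same route as the paper: induction on $k$, with the base case extracted from the proof of Theorem \ref{thm:localEquTree}, the inductive step split according to whether $\bran(\gamma')=k$ (using (ii) at level $k-1$ with $i=k$ to identify the pullback of the blowup center with $\Pi_{\gamma',k}$, and Lemma \ref{Zgk} to cover the resulting blowup by the charts $Z_{\gamma''}$, $\gamma''\in\Mon(\gamma')$). You in fact supply two details the paper leaves implicit --- the verification of (ii) in the base case by matching $\mathring\Theta_i$ with $\mathring\Pi_{\gamma,i}$ through the deformation-theoretic meaning of the $\zeta_a$, and the persistence of path trees in $\Lambda_{d,[k]}$ --- both of which are genuine (if minor) gaps in the paper's exposition that your argument correctly fills.
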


\begin{proof} The proof is by induction on $k$. 
In the case $k=1$, the desired morpisms are provided
in the proof of Theorem \ref{thm:localEquTree}.
Suppose $k\ge2$ and the lemma holds for a closed point $s\in \tilde{M}_1(\Pn,d)_{[k-1]}$.
Let $W_\gamma$ etc., be the corresponding data provided by the statement of the
lemma.  

If $\bran(\ga)=0$ or $\bran(\ga)>k$, then $\ga\in\Lambda_{d,[k+1]}$, while 
by (ii) with $k$ replaced by $k-1$ and Lemma 5.12
$$W_{\ga}\times_{\fM_{1,[k-1]}\uwt}\Theta_k= \eset.
$$
Thus, $s$ does not lies over the blowing up center of $\fM_{1,[k-1]}\uwt$, and
is in $\widetilde{M}_1(\Pn,d)_{[k]}$. By shrinking
$W_\gamma$ if necessary, the morphism $q_{\gamma,1}$ (provided by the inductive assumption)
lifts to $W_\gamma\to \widetilde{M}_1(\Pn,d)_{[k]}$
and the $k$-version of the statements (i) and (ii) are identical to its $(k-1)$-version.

If $\bran(\ga)=k$, we define $\cV_{\gamma,[k]}$ to
be the fiber product using the right square of \eqref{square-3} (see below). 
By the property (ii) and the construction of the
blowups $\fM_{1,[k]}\uwt\to\fM_{1,[k-1]}\uwt$ and $V_{\gamma,[k]}\to V_\gamma$,
there is a unique $\phi_{\gamma, [k]}$ making
the left square a fiber product:
\beq\lab{square-3}
\begin{CD}
V_{\gamma,[k]} @<{\phi_{\gamma,[k]}}<< \cV_{\gamma,[k]} @>>> \fM_{1,[k]}\uwt\\
@VVV @VVV @VVV\\ 
V_\gamma @<\phi_\gamma<< \cV_\gamma @>>> \fM_{1,[k-1]}\uwt
\end{CD}
\eeq
This shows that
$$W_\gamma\times_{\fM_{1,[k-1]}\uwt} \fM_{1,[k]}\uwt=W_\gamma\times_{V_\gamma} V_{\gamma,[k]},
$$
which is smooth over $Z_{\gamma,[k]}=Z_\gamma\times_{V_\gamma} V_{\gamma,[k]}$.
Therefore, by Lemma 5.14
$W_{\ga}\times_{\fM_{1,[k-1]}\uwt} \fM_{1,[k]}\uwt$
is covered by smooth morphisms 
$$W_{\ga}\times_{Z_{\ga}}Z_{\ga'}\lra
W_{\ga}\times_{\fM_{1,[k-1]}\uwt} \fM_{1,[k]}\uwt
$$
with $\ga'\in\Mon(\ga)\subset\Lambda_{d,[k]}$ that satisfy the statements of the lemma. Finally,
because 
$$\widetilde{M}_1(\Pn,d)_{[k]}=\widetilde{M}_1(\Pn,d)_{[k-1]}\times_{\fM_{1,[k-1]}\uwt} \fM_{1,[k]}\uwt,
$$
the lemma follows.
\end{proof}

\begin{say} We now describe a stratification of $\wMPd$. We define an equivalence relation
on $\Lambda_{d,[k]}$ by demanding that $\gamma\sim \gamma'$ if 
$\gamma$ is isomorphic to $\gamma$ as rooted but unweighted trees. Note that $W_\gamma$ 
and $W_{\gamma'}$ has isomorphic germs at their origin if and only if $\gamma\sim
\gamma'$. For $\gamma\in\Lambda_{d,[k]}$, we denote by $[\gamma]$ the equivalence
class of $\gamma$ in $c$.

For any point $\xi \in \wMPd_{[k]}$, we let
$$(\eta, W_\ga) \lra (\xi ,\wMPd_{[k]}) \and (\eta, W_\ga) \lra (x,
Z_\gamma)$$ be the 
{smooth} morphisms
provided by the previous lemma. We define $[\gamma_\xi]= [\gamma_x]$.
By the comment in the previous paragraph, the equivalence class $[\gamma_\xi]$ is
independent of the
choice of the chart covering $\xi$. Therefore, we can define
$$\wMPd_{[\gamma], [k]} = \{\xi \in \wMPd_{[k]} \mid [\gamma_\xi]=[ \gamma]\}.$$
In general, $\wMPd_{[\gamma], [k]}$ is  a disjoint union of closed substacks.
\end{say}

It follows immediately from the previous lemma that

\begin{lemm}\lab{thm:induction}
The stack $\wMPd_{[k]}$  has a stratification $\coprod
\wMPd_{[\gamma],[k]}$ indexed by $[\gamma]\in\Lambda_{d, [k]}/\sim$
such that it has singularity type $[\gamma]$ along stratum $\wMPd_{[\gamma],[k]}$.
\end{lemm}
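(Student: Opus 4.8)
The plan is to read the statement off from Lemma \ref{adhoc} together with the local description of $Z_\gamma$: once $\wMPd_{[k]}$ has been identified, smooth-locally around each closed point, with $Z_{\gamma_0}$ around a point, both the partition into strata and the singularity-type assertion transport from $Z_{\gamma_0}$ to $\wMPd_{[k]}$, since "having singularity type $\gamma$" is preserved by smooth morphisms and stable under fiber products.

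First I would dispose of the set-theoretic stratification. For a closed point $\xi\in\wMPd_{[k]}$ the class $[\gamma_\xi]$ is well defined: by the discussion preceding the statement, any two of the charts produced by Lemma \ref{adhoc} for $\xi$ have isomorphic germs at their distinguished points, and $W_{\gamma_0}$, $W_{\gamma_0'}$ have isomorphic germs at their origins precisely when $\gamma_0\sim\gamma_0'$, so $[\gamma_\xi]$ does not depend on the chart. The loci $\wMPd_{[\gamma],[k]}$ are then pairwise disjoint, cover $\wMPd_{[k]}$, and are indexed by $\Lambda_{d,[k]}/\!\sim$. To see each is locally closed I would use that inside $Z_{\gamma_0}$ the collapsing type is constant on each subset $\{z_a\neq 0\ (a\in I),\ z_a=0\ (a\notin I)\}$ with $I\subset\Vg\sta$, which is a locally closed subscheme; pulling this back along the two smooth morphisms $q_{\gamma,1}$, $q_{\gamma,2}$ of Lemma \ref{adhoc} and covering $\wMPd_{[k]}$ by finitely many such charts then exhibits $\wMPd_{[\gamma],[k]}$ as locally closed in $\wMPd_{[k]}$.

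For the singularity-type assertion, fix $\xi\in\wMPd_{[\gamma],[k]}$. Lemma \ref{adhoc} furnishes a tree $\gamma_0\in\Lambda_{d,[k]}$, a smooth chart $q_{\gamma,1}\colon W_{\gamma_0}\to\wMPd_{[k]}$ and a smooth $q_{\gamma,2}\colon W_{\gamma_0}\to Z_{\gamma_0}$ through a point $\eta$ lying over both $\xi$ and some $x\in Z_{\gamma_0}$, where by construction $\gamma_x$ (the collapsing of $\gamma_0$ at $I_x$) represents $[\gamma]$. By the lemma asserting that $Z_{\gamma_0}$ has singularity type $\gamma_x$ at $x$, there is a scheme $Y'$ with a point $y'$ and smooth morphisms $Y'\to Z_{\gamma_0}$ and $Y'\to Z_{\gamma_x}$ carrying $y'$ to $x$ and into $Z_{\gamma_x}^0$ respectively. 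Then $Y:=W_{\gamma_0}\times_{Z_{\gamma_0}}Y'$ is smooth over $\wMPd_{[k]}$ (through $W_{\gamma_0}$) and carries a smooth morphism to $Z_{\gamma_x}$ (through $Y'$), and the point of $Y$ over $(\eta,y')$ maps to $\xi$ and into $Z_{\gamma_x}^0$. Hence $\wMPd_{[k]}$ has singularity type $\gamma_x$, i.e.\ type $[\gamma]$, at $\xi$; since $\xi$ was an arbitrary point of the stratum, this holds along all of $\wMPd_{[\gamma],[k]}$.

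The one step that is not purely formal — and the closest thing to an obstacle — is the local closedness of the strata, which forces one to use the explicit monomial shape of $V_\gamma$ and $Z_\gamma$ and to patch the charts of Lemma \ref{adhoc} over a neighborhood rather than at a single point; the rest is a mechanical composition of the chart of Lemma \ref{adhoc} with the chart witnessing the singularity type of $Z_{\gamma_0}$.
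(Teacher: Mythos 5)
Your proposal is correct and follows the same route as the paper, which derives this lemma directly from Lemma \ref{adhoc} together with the local singularity-type description of $Z_\gamma$ (the paper in fact records only that it ``follows immediately from the previous lemma''). Your unpacking --- transporting the singularity type through the fiber product $W_{\gamma_0}\times_{Z_{\gamma_0}}Y'$ and checking well-definedness of $[\gamma_\xi]$ and local closedness of the strata via the coordinate description of $Z_{\gamma_0}$ --- is a faithful and correct elaboration of exactly that argument.
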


By Lemma \ref{mon2}, $\Lambda_{d, [d]}$ consists of only path
trees. Note that in this case, $\gamma\sim \gamma'$ if and only if $\gamma = \gamma'$.
  Hence we have

\begin{theo}\lab{thm:inductionCase=d}
$\wMPd$  has a stratification $\coprod_{\gamma\in \Lambda_{d,[d]}} \wMPd_{\gamma}$ and
 has normal crossing singularity type
$\gamma$ along $\wMPd_{\gamma}:=\wMPd_{[\gamma],[d]}$.
\end{theo}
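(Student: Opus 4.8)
The plan is to derive Theorem~\ref{thm:inductionCase=d} as a direct consequence of Lemma~\ref{thm:induction} applied with $k=d$, after clearing two bookkeeping points: that $\wMPd$ already coincides with the $d$-th intermediate blowup $\wMPd_{[d]}$, and that on $\Lambda_{d,[d]}$ the equivalence relation $\sim$ degenerates to equality, so the strata are honestly indexed by $\Lambda_{d,[d]}$ itself. First I would record that, because $c_1(f^*\sO_{\Pn}(1))$ has degree $d$, the image of the tautological morphism $\MPd\to\fMw$ is disjoint from every $\Theta_k$ with $k>d$; hence no blowup beyond the $d$-th alters the fiber product, and by construction $\wMPd=\MPd\times_{\fMw}\fM\uwt_{1,[d]}=\wMPd_{[d]}$. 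Lemma~\ref{thm:induction} (taken at $k=d$, whose proof rests on the inductive Lemma~\ref{adhoc}) then already furnishes a stratification $\coprod_{[\gamma]}\wMPd_{[\gamma],[d]}$ of $\wMPd_{[d]}$ indexed by $[\gamma]\in\Lambda_{d,[d]}/\sim$, with $\wMPd_{[d]}$ having singularity type $[\gamma]$ along $\wMPd_{[\gamma],[d]}$.

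Next I would invoke Lemma~\ref{mon2}: every $\gamma\in\Lambda_{d,[d]}$ is a path tree, i.e. a chain $o\prec v_1\prec\cdots\prec v_r$ with $v_r$ its unique terminal vertex. For such a tree terminal weightedness forces $v_r$ to carry the entire weight $d$ and every other vertex to be ghost, so the underlying rooted (unweighted) tree recovers the weighted tree uniquely; consequently $\gamma\sim\gamma'$ iff $\gamma=\gamma'$ on $\Lambda_{d,[d]}$, so $\Lambda_{d,[d]}/\sim\,=\Lambda_{d,[d]}$. Setting $\wMPd_\gamma:=\wMPd_{[\gamma],[d]}$ and letting $\gamma$ range over $\Lambda_{d,[d]}$ therefore produces the asserted stratification $\wMPd=\coprod_{\gamma\in\Lambda_{d,[d]}}\wMPd_\gamma$, with $\wMPd$ having singularity type $\gamma$ along $\wMPd_\gamma$.

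It then remains to check that the singularity type attached to a \emph{path} tree is a normal crossing one, i.e. that $Z_\gamma$ itself has normal crossing singularities; the smooth charts $q_{\gamma,1},q_{\gamma,2}$ supplied by Lemma~\ref{thm:induction} transport this property to $\wMPd$ along $\wMPd_\gamma$, the relevant lift landing in $Z_\gamma^0$ as required by the definition of singularity type. For $\gamma=\overline{o\,v_r}$ with $v_r$ terminal of weight $d$ we have $\Vg\sta=\{v_1,\dots,v_r\}$ and $\Vg^t=\{v_r\}$, so by the recipe of \S\ref{localEqre} the model space is $E_\gamma\cong\AA^{r+n}$ with coordinates $z_{v_1},\dots,z_{v_r},w_{v_r,1},\dots,w_{v_r,n}$ and $\Phi_{\gamma,e}=z_{v_1}\cdots z_{v_r}\,w_{v_r,e}$ for $1\le e\le n$. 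Hence
$$Z_\gamma=\{z_{v_1}=0\}\cup\cdots\cup\{z_{v_r}=0\}\cup\{w_{v_r,1}=\cdots=w_{v_r,n}=0\},$$
a union of coordinate linear subspaces of $\AA^{r+n}$ --- precisely a normal crossing configuration; this is exactly the observation recorded in \ref{induc-blowup} for the system \eqref{Phi22}. This completes the proof.

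A remark on where the weight of the argument lies: essentially all of it is already behind us. The substantive inputs are the inductive local-structure statement of Lemma~\ref{adhoc}/Lemma~\ref{thm:induction} and the combinatorial stabilization $\Lambda_{d,[d]}=\{\text{path trees}\}$ of Lemma~\ref{mon2}; the present theorem merely packages them. The one point deserving a line of care is the reconciliation of the paper's use of ``normal crossing singularities'' --- which must permit irreducible components of different dimensions, reflecting that the primary component $\wMPd_0$ and the components indexed by partitions of $d$ have different dimensions --- with the explicit shape of $Z_\gamma$ above, namely a union of coordinate subspaces of possibly unequal codimension.
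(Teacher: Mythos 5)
Your proposal is correct and follows essentially the same route as the paper: the theorem is obtained by combining Lemma \ref{thm:induction} at $k=d$ with Lemma \ref{mon2} (so $\Lambda_{d,[d]}$ consists of path trees, on which $\sim$ degenerates to equality). Your explicit verification that for a path tree $\gamma$ the model $Z_\gamma$ is a union of coordinate subspaces --- hence ``normal crossing'' in the paper's sense, allowing components of unequal dimension --- is exactly the point the paper leaves implicit (cf.\ the remark on \eqref{Phi22}), and is a welcome addition.
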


\subsection{Proof of Theorem \ref{thm:equ3}} \begin{proof} For each $\gamma \in \Lambda_{d,
[d]}$, it follows from the proof of Lemma \ref{adhoc} that 
to each $\xi\in \wMPd_{\gamma}$, we can find an \'etale $\xi\in
\widetilde U_\gamma \to \wMPd$, a scheme $\widetilde \cU_\gamma$
and a smooth morphism $q_1: \widetilde \cU_\gamma \to \widetilde
U_\gamma$ with an open embedding $q_2: \widetilde\cU_\gamma  \lra
\bl \widetilde F =0 \br \subset \cE_{\widetilde\cV}$. Let $\bl
\widetilde F =0 \br \to Z_\gamma \sub E_\gamma$ be induced by
\eqref{UU}. 
By the same reasoning as in the proof of Theorem
\ref{thm:localEquTree}, if we let
$$\Phi_\gamma=(\Phi_{\gamma,1},\cdots,\Phi_{\gamma,n})\in\Gamma(\sO_{E_\gamma}\un),
$$ and $\tilde\phi: \cE_{\widetilde\cV} \lra
E_\gamma$ be the natural morphism (similarly defined as
\eqref{thephitilde}), then
$$ \tilde\phi\sta(\Phi_\gamma)=\widetilde F.$$ Since $\gamma \in
\Lambda_{d, [d]}$, $\gamma$ is of the form $o[v_1[\cdots[v_r]]]$
with $ r \le d$. Hence each 
$$\Phi_{\gamma,e} = w_e z_1\cdots
z_r.
$$ 
This proves the theorem.
\end{proof}

\begin{say} {\bf Proof of Theorem \ref{thm:m1}}
\begin{proof}
Theorem \ref{thm:inductionCase=d} is a refined version of Theorem
\ref{thm:m1}.
\end{proof}

Finally, since the primary component $\overline{M}_1(\Pn,d)_0$ of $\overline{M}_1(\Pn,d)$
is irreducible and of dimension $(n+1)d$, thus the primary component
$\wMPd_0$ of $\wMPd$, which is the proper transform of $\overline{M}_1(\Pn,d)_0$, 
is smooth and is defined by $(w_1=\cdots=w_n=0)$ in each chart.
\end{say}

\subsection{Proof of Theorem \ref{thm:m2}}
\vskip3pt

We now  prove

\medskip\noindent
{\bf Theorem \ref{thm:m2}.} {\it {For any $r\geq 0$, the
direct image sheaf $\tilde{\pi}_{\mu*} \tilde{f}_\mu
\sta\sO_{\Pn}(r)$ is locally free over every  component
$\wMPd_\mu$ where $\mu$ is either 0 or a partition of $d$. It is
of rank $rd$ when $\mu=0$ and of rank $rd+1$ otherwise.}}

\begin{proof}
 For $r \ge 1$, we set $m=dr$ and follow the notations introduced earlier.
 We let $\tilde\xi\in \wMPd_\mu$ be any point and let $\xi\in \MPd$ its image.
We pick a local chart $\cU= \cV \times_{\fD_1} U$ of $\xi\in\MPd$, and introduce
$$\eta: \widetilde\cV = \cV \times_{\fD_1} \widetilde{\fD}_1 \lra \cV
,  \quad \lambda: \widetilde\cU = \widetilde\cV \times_{\fD_1} U
\lra \cU.
$$
Then $\widetilde\cU$ is a chart of $\wMPd$. 

Continue to write $(\cC, \cD)$ the tautological family over $\cV$, we set
$$\widetilde\cX'=\cX \times_U  \widetilde\cU=\cC \times_\cV \widetilde\cU, \quad \widetilde\cD'= \cD \times_U \widetilde\cU= \cD \times_\cV \widetilde\cU.
$$
Here the second equality in each set of identities follows from the universality of $\fD_1$.
We let $\tilde{\cA}'=\cA \times_\cV \widetilde\cU$ and
$\tilde{\sL}' = \sO_{\widetilde\cX'}(\widetilde\cD')$. The square \beq\lab{square2}
\begin{CD}
\widetilde\cX' @>{\tilde\beta}>> \cC \\
@V{\tilde\pi'}VV @V{\rho}VV  \\
\widetilde\cU @>{\beta}>> \cV,
\end{CD} \eeq
combined with the cohomology and base change theorem, gives the following commutative diagram
$$\begin{CD} \tilde\pi'_* \tilde{\sL}'(\tilde{\cA}')
@>{\tilde\varphi}>> \tilde\pi'_* \sO_{\tilde{\cA}'}(\tilde{\cA}') \\
@| @|\\
\beta^* \rho_* \sL(\cA) @>{\beta^*\varphi}>> \beta^* \rho_*
\sO_{\cA}(\cA).
\end{CD}
$$
Thus investigating $\tilde\varphi$ is equivalent to investigating $\beta^*\varphi$.

For $\varphi$,  by \eqref{mainHom} it is the direct sum of the zero homomorphism on $\sO_\cV$ with
$$\Oplus_{i=1}^m \varphi_i:
 \bigoplus_{i=1}^m \rho_*\sM_i\lra \rho_*
\sO_\cA(\cA), \quad \varphi_i= \zeta_{[\delta_i, a]}.
$$
Therefore, 
$\tilde\varphi$ is given by a
direct sum of the zero homomorphism
with the pullbacks of  $\varphi_i= \zeta_{[\delta_i,a]}$.

We now let $\tilde\gamma=o[v_1[\cdots[v_r]\cdots]] \in \Lambda_{d,
[d]}$ be the tree associated to $\tilde\xi$ and let
$\gamma$ be the tree associated to $\xi$.
By Lemma \ref{adhoc}, to $\tilde\xi\in \wMPd_{[\tilde\gamma]}$ we
can find an \'etale neighborhood $ \widetilde U_{\tilde\gamma}  \ni \tilde\xi
\to \wMPd$ and  a scheme $W_{\tilde\gamma}$ with smooth morphisms
$$W_{\tilde\gamma} \stackrel{\theta} \lra  \widetilde U_{\tilde\gamma}\and
W_{\tilde\gamma}\lra Z_{\tilde\ga}.
$$
(We can assume  $\widetilde\cU=W_{\tilde\gamma}$.)
By the inductive construction of \ref{induc-blowup},
we know that the tree $\tilde\gamma$ is obtained from
$\gamma$ by successively advancing
vertices until a terminal vertex is advanced. Assume that $1 \le i
\le \ell$ is the terminal vertex (in $\gamma$) that is advanced in the last step
(here we adopt the indexing scheme as arranged in the proof of
Theorem \ref{thm:key}). Then by following the same inductive
construction of \ref{induc-blowup} step by step, we obtain
$$\beta^*\varphi_i= z_{v_1} \cdots z_{v_r} 
\and \beta^*\varphi_i | \beta^*\varphi_j \;\hbox{for all $j \ne
i$}.$$ Thus as in the proof of Theorem \ref{thm:key}, using a new
basis of
$$\tilde\pi'_* \tilde{\sL}'(\tilde{\cA}') =\beta^* \rho_* \sL(\cA)
=\sO_{W_{\tilde\gamma}}\oplus \bigoplus_{i=1}^m \beta^*\rho_*\sM_i \cong
\sO_{W_{\tilde\gamma}}^{\oplus m+1},$$
 we see that the kernel of $\tilde\varphi$ is a direct sum of $\sO_{W_{\tilde\gamma}}^{\oplus
 m}$ with the kernel of the homomorphism
$$z_{v_1} \cdots z_{v_r}: \sO_{W_{\tilde\gamma}} \lra \sO_{W_{\tilde\gamma}}.$$

Now if $\tilde\xi \in \wMPd_0$, then 
 $z_{v_1} \cdots z_{v_r}$ does not vanish at general points of 
 $\theta^{-1}(\widetilde{U}_{\tilde\gamma} \cap \wMPd_0)$. Hence the 
 kernel sheaf of $\tilde\varphi$ is locally free of rank $m$ over 
 $\theta^{-1}(\widetilde{U}_{\tilde\gamma} \cap \wMPd_0)$.
 If $\tilde\xi \in \wMPd_\mu$ for $\mu$ a partition of $d$, then,  one of $z_{v_1},
\cdots, z_{v_r}$ vanishes along $\theta^{-1}(\widetilde{U}_{\tilde\gamma} \cap \wMPd_\mu)$.
Hence the 
 kernel sheaf of $\tilde\varphi$ is locally free of rank $m+1$ over 
 $\theta^{-1}(\widetilde{U}_{\tilde\gamma} \cap \wMPd_\mu)$.
This proves the theorem.
\end{proof}

\subsection{Remarks on moduli spaces of stable maps with marked points}

 Finally, we point out that all the main results in this paper generalize directly to moduli space
 of genus one stable maps
with marked points. 
This extension requires introducing blowup loci involving the marked points analogous to those described
in \cite{VZ}.

\bigskip\bigskip\bigskip\bigskip

\end{document}